\documentclass[reqno,11pt]{amsart}
\numberwithin{equation}{section}

\usepackage[utf8]{inputenc}
\usepackage[italian,english]{babel}
\usepackage{amsmath}
\usepackage{amsfonts}
\usepackage{amssymb,esint}
\usepackage{tikz}
\usepackage{geometry}
\geometry{right=3 cm, bottom=2 cm}

\usepackage{color}
\usepackage{mathrsfs}
\usepackage{mathtools}

\mathtoolsset{showonlyrefs}

\vfuzz2pt
\hfuzz2pt

\usepackage[colorlinks, citecolor=citegreen, linkcolor=refred, urlcolor=blue]{hyperref}
\definecolor{citegreen}{rgb}{0,0.4,0}
\definecolor{refred}{rgb}{0.5,0,0}


{\catcode `\@=11 \global\let\AddToReset=\@addtoreset}
\AddToReset{equation}{section}
\renewcommand{\theequation}{\thesection.\arabic{equation}}
\newcounter{mnotecount}[section]

\renewcommand{\themnotecount}{\thesection.\arabic{mnotecount}}

\newcommand{\mnote}[1]
{\protect{\stepcounter{mnotecount}}$^{\mbox{\footnotesize
$
\bullet$\themnotecount}}$ \marginpar{
\raggedright\tiny\em
$\!\!\!\!\!\!\,\bullet$\themnotecount: #1} }

\newcommand{\jj}[1]%
{{\color{red}\mnote{{\color{red}{\bf jj:} #1} }}}

%
%

\theoremstyle{plain}
\newtheorem {theorem}{Theorem}[section]
\newtheorem {lemma}[theorem]{Lemma}
\newtheorem {proposition} [theorem]{Proposition}
\newtheorem {corollary} [theorem]{Corollary}

\newtheorem{definition}[theorem]{Definition}
\theoremstyle{remark}
\newtheorem{remark}[theorem]{Remark}
\newtheorem{example}[theorem]{Example}

\DeclarePairedDelimiter\abs{\lvert}{\rvert}

\renewcommand{\theequation}{\arabic{section}.\arabic{equation}}
\renewcommand{\thetheorem}{\arabic{section}.\arabic{theorem}}

\newcommand{\R}{\mathbb R}
\newcommand{\N}{\mathbb N}

\renewcommand{\theta}{\vartheta}


\newcommand{\barint}
{\rule[.036in]{.12in}{.009in}\kern-.16in \displaystyle\int}


\newcommand{\dive}{{\mathrm{div}}}

\newcommand{\pa}{\partial }

\usepackage{color}


\newcommand{\numberset}{\mathbb}
\renewcommand{\N}{\numberset{N}}
\renewcommand{\R}{\numberset{R}}
\newcommand{\Sf}{\numberset{S}}


\newcommand{\AVR}{{\rm AVR}(g)}
\newcommand{\capa}{{\rm Cap}}
\newcommand{\D}{{\rm D}}
\newcommand{\dd}{{\,\rm d}}
\newcommand{\HH}{{\rm H}}
\newcommand{\hh}{{\rm h}}

\newcommand{\Om}{\Omega}
\renewcommand{\phi}{\varphi}
\renewcommand{\epsilon}{\varepsilon}

\setcounter{secnumdepth}{3}




\title[Minimising hulls, p-capacity and isoperimetric inequality]{Minimising hulls, p-capacity and isoperimetric inequality on complete Riemannian manifolds}

\author[M.~Fogagnolo]{Mattia Fogagnolo}
\address{M.~Fogagnolo, Centro di Ricerca Matematica Ennio De Giorgi, Scuola Normale Superiore,
Piazza dei Cavalieri 3, 56126 Pisa (PI), Italy}
\email{mattia.fogagnolo@sns.it}

\author[L.~Mazzieri]{Lorenzo Mazzieri}
\address{L.~Mazzieri, Universit\`a degli Studi di Trento,
via Sommarive 14, 38123 Povo (TN), Italy}
\email{lorenzo.mazzieri@unitn.it}

\begin{document}
\begin{abstract}
{
The notion of strictly outward minimising hull is investigated for open sets of finite perimeter sitting inside a complete noncompact Riemannian manifold. Under natural geometric assumptions on the ambient manifold, the strictly outward minimising hull $\Om^*$ of a set $\Om$ is characterised as a maximal volume solution of the least area problem with obstacle, where the obstacle is the set itself. {In the case where $\Om$ has $\mathscr{C}^{1, \alpha}$-boundary, the area of $\pa \Om^*$ is recovered as the limit of the $p$-capacities of $\Om$, as $p \to 1^+$.
Finally, building on the existence of strictly outward minimising exhaustions, a sharp isoperimetric inequality is deduced
on complete noncompact manifolds with nonnegative Ricci curvature, provided $3 \leq n \leq 7$. }}

\end{abstract}


\maketitle

\bigskip
\noindent\textsc{MSC (2020): 
49Q10, 49J40, 53C21, 31C15, 53E10. 
}

\smallskip
\noindent{\underline{Keywords}:  
variational problems with obstacle, nonlinear potential theory, inverse mean curvature flow, functional inequalities.}

\section{Introduction and statement of the main results}
{
Given a bounded set $\Omega \subset \R^n$ with finite perimeter, the problem of finding and describing its best envelope is a classical problem in Calculus of Variations, whose first rigorous settlement should be found in~\cite{miranda-ostacoli}. We refer the reader to that article and to the references therein also for a nice historical account on the subject. 

At a first glance, the best envelope of $\Omega \subset \R^n$ can be conceived as a bounded set that minimises the perimeter among all of those sets that contain $\Omega$. However, easy examples show that this property is not sufficient to guarantee the uniqueness of the envelope. In other words, there are sets that can be enclosed into several area minimising envelopes. The question arises of which further requirement should be imposed in order to find the most meaningful mathematical object. A possible choice is the one proposed in~\cite{bassanezi-tamanini}, where the authors selected, among the area minimising envelopes of a given set, the one that minimises the Hausdorff distance to the set itself, obtaining the so called {\em minimal hull}. 

More recently, the fundamental settlement of the theory of weak solutions to the Inverse Mean Curvature Flow (IMCF for short) by Huisken and Ilmanen~\cite{Hui_Ilm} has revealed the importance of
selecting the best area minimising envelope with a substantially different criterion, highlighting at the same time the relevance of this kind of questions even when they are posed in a more general context, such as on some relevant classes of complete Riemannian manifolds. More concretely, the authors showed that if a set $E$ appears along the evolution designed by the weak IMCF at some positive time, then it is necessarily {\em strictly outward minimising}, i.e.,
its perimeter satisfies $P(E) \leq P(F)$ for any $F \supseteq E$, and the equality is achieved only when $E$ coincides with $F$ almost everywhere.
A consequence of this fact is that, no matter what the initial set $\Omega$ is, the weak IMCF istantaneously takes it to a new bounded set $\Omega^*$ that is {\em strictly outward minimising}. 

In $\R^n$ the set $\Om^*$ satisfies two further remarkable properties. First, among the area minimising envelopes of $\Om$, it is the one that maximises the volume. Secondly, among all bounded strictly outward minimising envelopes of $\Omega$, it is the one that minimises the volume. In this sense, it might be thought as the smallest strictly outward minimising envelope of $\Om$. In view of this latter observation we will refer to the set $\Omega^*$ as to the~\emph{strictly outward minimising hull} of $\Omega$. 

To summarise,
we have that in $\R^n$ the {\em strictly outward minimising hull} $\Om^*$ of a given bounded set $\Om$ with sufficiently smooth boundary can be equivalently characterised as: 
{\em
\begin{itemize}
\item[$(A)$] The minimal volume strictly outward minimising envelope of $\Om$;
\medskip
\item[$(B)$] The maximal volume solution to the least area problem with obstacle $\Om$;
\medskip
\item[$(C)$] The (measure theoretic) interior of the sublevel set $\{ w \leq 0\}$, where $w$ is the arrival time function in the weak formulation of the IMCF starting at $\pa \Om$.
\smallskip
\end{itemize}
}
Before proceeding, we point out that, albeit this trifold characterisation of the strictly outward minimising hulls seems to be partially known to experts, at least in the usual Euclidean setting, we were unable to find a systematic and fully detailed treatment of this subject in the literature, especially for what concerns the variational characterisation described in statement $(B)$. Part of our motivation for the present work comes precisely from the attempt of giving a consistent and uniform presentation of the whole picture, encompassing all the different  aspects of such a central notion.

On this regard, there is another fact about $\Om^*$ that  is worth mentioning, namely that its perimeter in $\R^n$ can be computed as the limit of the $p$-capacities of $\Om$ as $p \to 1^+$. Apart from its concrete application in the proof of the extended Minkowski Inequality~\cite[Theorem 1.1]{Ago_Fog_Maz_2}, we found this result particularly interesting, since it reveals a profound relationship between the best envelope problem, the Huisken-Ilmanen's notion of weak IMCF and the Nonlinear Potential
Theory. As we are going to see, such a deep connection continues to be true even beyond the familiar Euclidean framework, somehow suggesting that a geometric evolution through weak IMCF should be well defined if and only if the perimeter of any strictly outward minimising hull is well approximated by the $p$-capacities of the corresponding obstacle.

\bigskip

\emph{Aim of this work is to discuss the rigorous definition and the variational properties of the strictly outward minimising hull of a given set of finite perimeter in the context of complete noncompact Riemannian manifolds, highlighting the relations with the notion of weak IMCF as well as with the geometric features of the nonlinear potential theory.}

\bigskip

We start observing that, no matter in which framework we are, any natural notion of strictly outward minimising hull is supposed to select a set that minimises the perimeter, among the strictly outward minimising envelopes of a given obstacle.
Let us mention that such a property was actually needed in the proof of the Isoperimetric Inequality \emph{with sharp constant} performed in~\cite{Ago_Fog_Maz_1} in the context of complete noncompact $3$-manifolds with nonnegative Ricci curvature, as well as in the one proposed by Schulze in~\cite{schulze1} on $3$-dimensional Cartan-Hadamard manifolds.

Having the Euclidean picture in mind, it might seem natural at a first glance to consider the class of bounded sets with finite perimeter and look for a good definition of strictly outward minimising hull, using either condition $(A)$ or $(B)$ as a paradigm.
However, easy examples show that, {in a general Riemannian ambient, the strictly outward minimizing hull $\Omega^*$, {roughly understood as the smallest bounded strictly outward minimising set containing $\Omega$}, might in general fail to be well defined.} Indeed, manifolds with cuspidal or cylindrical ends (see the discussion in Examples~\ref{example-cuspidal} and~\ref{example-cylindrical}, for more details)
provide natural examples of spaces in which one can find bounded obstacles that are not contained in any strictly outward minimising set, making impossible the construction via intersection of the corresponding strictly outward minimising hull. 

Several other pathologies are present in this class of manifolds. For example, in  manifolds with cuspidal ends, the least area problem with obstacle might have no solutions even when the obstacle is a nice bounded sets with finite perimeter.
On the other hand, on manifolds with cylindrical ends, the least area problem with bounded smooth obstacle does actually admits solutions, however it is impossible to select among them a bounded area minimising envelope with maximal volume (see Example~\ref{example-cylindrical}). This is not anecdotal, as we are going to prove in Theorem~\ref{main-hull} that the existence of a {bounded} strictly outward minimising hull $\Omega^*$ of a set with finite perimeter $\Omega$ is indeed equivalent to the solvability of the {\em maximal volume-least area problem} with obstacle $\Omega$. Each of these instances -- when holding for every bounded set with finite perimeter -- is in turn equivalent to the existence of an exhausting sequence of bounded strictly outward minimising sets. 
Though the latter condition can be readily checked on a given explicit metric, it is not {\em a priori} clear if it yields an effective criterion to decide whether a relevant class of manifolds (e.g., manifolds subject to some natural geometric requirement, such as curvature bounds, controlled volume growth, etc.) admits a well posed notion of {\em strictly outward minimising hull}.
Our first main result answer this question in the affirmative, identifying a couple of simple geometric conditions under which the set $\Om^*$ is always well defined (up to a zero-measure modification) and can be characterised either as the least volume strictly outward minimising set enveloping $\Om$, or as the maximal volume solution to the least area problem with obstacle $\Om$.

\begin{theorem}
\label{main-limit-1}
Let $(M, g)$ be a complete noncompact Riemannian manifold satisfying at least one of the following two conditions:
\begin{enumerate}
\item[$(i)$] \emph{(Euclidean-like Isoperimetric Inequality).} There exists a positive constant $C_{\rm iso} > 0$ such that 
\begin{equation}
\label{iso-cond-intro}
\frac{\abs{\partial \Omega}^{n}}{\abs{\Omega}^{n-1}} \geq \mathrm{C}_{\rm iso}
\end{equation} 
for any bounded set $\Omega$ with smooth boundary.

\smallskip

\item[$(ii)$] \emph{($\rm{Ric} \geq 0$ \& Polynomial Uniform Superlinear Volume Growth).} $(M, g)$ has nonnegative Ricci curvature and there exist constants $\mathrm{C}_{\mathrm{vol}} > 0$ and $1 < b \leq n$ such that
\begin{equation}
\label{minerbe}
\mathrm{C}_{\rm vol}^{-1} \, r^{b} \leq \abs{B(O, r)} \leq \mathrm{C}_{\rm vol} \, r^b
\end{equation}
holds true for any large enough  $r$.
\end{enumerate}
Then, every  bounded open set $\Omega \subset M$ with finite perimeter admits a \emph{strictly outward minimising hull} $\Omega^*$ in the sense of Definition~\ref{smh-def}. Moreover, the set $\Omega^*$ is an open bounded maximal volume solution to the least area problem with obstacle $\Om$, according to Definition~\ref{def-obst-pb}.
\end{theorem}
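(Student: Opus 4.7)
The plan is to reduce the statement, via Theorem~\ref{main-hull}, to the construction of an exhausting sequence of bounded strictly outward minimising open sets. Once such an exhaustion $\{E_k\}$ is available, given any bounded $\Om$ of finite perimeter one encloses $\Om \subseteq E_k$ for $k$ large enough and minimises the perimeter among sets of finite perimeter $F$ with $\Om \subseteq F \subseteq E_k$; passing to the maximal-volume representative then yields $\Om^*$. So the entire task becomes: under either assumption $(i)$ or $(ii)$, exhibit an exhausting family of bounded strictly outward minimising sets.

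First I would handle case $(i)$. Fix a point $O \in M$ and, for each $R > 0$, consider the problem of minimising $\Per(F)$ among open sets $F \supseteq B(O, R)$ of finite perimeter. The Euclidean-like isoperimetric inequality~\eqref{iso-cond-intro} bounds $\abs{F}$ by a power of $\Per(F)$, so any minimising sequence is uniformly bounded in $BV$ and, by standard $BV$-compactness on Riemannian manifolds, admits a subsequential limit $E_R$. A truncation argument, once again relying on~\eqref{iso-cond-intro}, shows that $E_R$ has bounded support: if $E_R$ carried positive mass outside a sufficiently large concentric ball, chopping off that piece would produce a competitor with strictly smaller perimeter. Taking maximal-volume representatives, the family $\{E_R\}_{R>0}$ is the desired exhausting sequence of bounded strictly outward minimising sets, since any $F \supseteq E_R$ satisfies $F \supseteq B(O,R)$ and hence $\Per(F) \geq \Per(E_R)$ by the very definition of $E_R$.

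For case $(ii)$, I would carry out the same scheme, but replace~\eqref{iso-cond-intro} by a large-scale isoperimetric-type estimate of the form $\Per(E) \geq c\,\abs{E}^{(b-1)/b}$ valid for sets $E$ of sufficiently large volume, which is available under $\mathrm{Ric} \geq 0$ combined with the two-sided polynomial volume bound~\eqref{minerbe} by Minerbe-type arguments. This is enough to bound the volumes of minimising sequences in the variational problem above, since the inequality is only needed to rule out pathological behaviour at infinity; at small scales, the Bishop--Gromov comparison coming from $\mathrm{Ric} \geq 0$ provides the local isoperimetric control needed for compactness and regularity of the minimiser.

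The main obstacle is precisely the boundedness of the perimeter-minimising limit. Under $(i)$ it is immediate, since outside a large ball the isoperimetric inequality makes any nontrivial mass too expensive in perimeter. Under $(ii)$ it is more delicate: one must rule out that a thin tentacle running through the polynomial end of $(M,g)$ carries nontrivial volume at arbitrarily small perimeter cost, and this is exactly what is excluded by the \emph{uniform lower bound} $\mathrm{C}_{\mathrm{vol}}^{-1} r^b \leq \abs{B(O,r)}$ with $b > 1$, coupled with Ricci nonnegativity. Once boundedness is secured, the equivalence provided by Theorem~\ref{main-hull} promotes the resulting set to a strictly outward minimising maximal-volume solution of the obstacle problem in the sense of Definition~\ref{def-obst-pb}, completing the proof.
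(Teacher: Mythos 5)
Your treatment of case $(i)$ is essentially the same as the paper's (Proposition~\ref{existence-iso}): a minimising sequence is forced to stay in a fixed ball by a coarea/ODE argument driven by the Euclidean-like isoperimetric inequality, and then compactness and lower semicontinuity plus Theorem~\ref{main-hull} finish the job.

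Your treatment of case $(ii)$, however, departs sharply from the paper and has a genuine gap. You posit that $\mathrm{Ric}\geq 0$ together with the two-sided polynomial growth~\eqref{minerbe} yields a global estimate $P(E)\geq c\,\abs{E}^{(b-1)/b}$ for sets of large volume, attributing it to ``Minerbe-type arguments.'' No such isoperimetric inequality is established or cited in the paper; what Minerbe's work (and the paper's own reference to it in Remark~\ref{minerbe-relax}) provides are weighted Sobolev inequalities and a technical control of the bounded components of $M\setminus B(O,R)$, which is not the same thing. Moreover, even granting your large-volume estimate, the argument does not close: to force a perimeter minimiser $F$ to be bounded you run the coarea/ODE argument on $m(r)=\abs{F\setminus B(O,r)}$, but $m(r)\to 0$, so the inequality ceases to apply precisely where you need it. Your appeal to small-scale Bishop--Gromov control does not repair this, since the thin tail $F\setminus B(O,r)$ is not confined to any fixed ball, so a local relative isoperimetric inequality does not reach it. The paper avoids this entirely: for assumption $(ii)$ it invokes the existence of a \emph{proper weak Inverse Mean Curvature Flow} from any bounded smooth domain, proved by Mari--Rigoli--Setti under exactly these hypotheses, and then uses Huisken--Ilmanen's Minimizing Hull Property (Proposition~\ref{imcf-hulls} and Corollary~\ref{imcf-hulls-cor}) to produce the exhausting family of bounded strictly outward minimising sets. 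That IMCF route is the missing ingredient; the variational scheme you propose for $(i)$ does not transfer to $(ii)$.
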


Before venturing in a list of significant Riemannian manifolds satisfying assumptions $(i)$ or $(ii)$, we would like to underline that -- quite surprisingly -- a complete proof of the above result does not seem to be available in the literature, even in the case where the ambient manifold is the flat $\R^n$. In that framework, the closest study is the one performed in~\cite{bassanezi-tamanini}, where a similar result was derived for the so called \emph{minimal hull}. As already mentioned, this notion differs subtly but fundamentally from the one of strictly outward minimising hull, since, in the present terminology, the minimal hull coincides with the smallest outward minimising envelope of $\Omega$, whereas we are mainly interested in the smallest \emph{strictly} outward minimising envelope of $\Omega$. Despite
the perimeter of the resulting sets will always coincide, their volumes will in general be different. 
However, we acknowledge that some of the techniques presented~\cite{bassanezi-tamanini} will be very useful in our arguments.


Going beyond the easy but fundamental example of the flat $\R^n$, Theorem~\ref{main-limit-1} encompasses a vast diversity of manifolds. First of all, let us remark that complete noncompact manifolds with nonnegative Ricci curvature and Euclidean volume growth satisfy both conditions~\eqref{iso-cond-intro} and~\eqref{minerbe}, as it was likely realised for the first time in~\cite{varopoulos-semigroups} (see also~\cite[Theorem 8.4]{hebey} for an explicit derivation through the techniques of~\cite{carron-thesis}). As this class of manifolds is particularly adapted to our methods, we will be able to push the analysis a little further, obtaining an explicit characterisation of the optimal isoperimetric constant in Theorem~\ref{isoperimetric-theorem} below. It is worth clarifying at this point that, as far as one is interested in applying Theorem~\ref{main-limit-1}, the explicit knowledge of the sharp constant in these Isoperimetric Inequalities is not required. However, it would be interesting to investigate if one could reply the above mentioned boot-strap scheme for the precise description of the isoperimetric constant also in other frameworks. For example, we recall that a positive isoperimetric constant is known to exist in Cartan-Hadamard manifolds, that are simply connected Riemannian manifolds with nonpositive sectional curvature. This is substantially a consequence of \cite{hoffman-spruck} (see  also~\cite[Theorem 8.3]{hebey}). Thus, by Theorem \ref{main-limit-1}, $\Omega^*$ is characterised also in this case as an open bounded maximal volume solution to the least area problem with obstacle  $\Omega$. This fact has been applied, for example, in the proof of~\cite[Theorem 1.3]{schulze1}, where the sharp constant in the Isoperimetric Inequality is explicitely computed for $3$-dimensional Cartan-Hadamard manifolds, reproving an earlier important result by B. Kleiner~\cite{kleiner-isoperimetric}. 

Condition \eqref{iso-cond-intro} is also readily checked on Riemannian manifolds with an explicit $\mathscr{C}^0$-description at infinity. Indeed, if $(M ,g)$ satisfies an Isoperimetric Inequality outside some compact set $K \subset M$, then the whole Riemannian manifold $(M, g)$ admits a positive isoperimetric constant. This is proved in details in~\cite[Theorem 3.2]{pigola-setti-troyanov}. 
As consequence, our Theorem applies to the important classes of Asymptotically (Locally) Euclidean manifolds and Asymptotically (Locally) Hyperbolic manifolds. 

We pass now to examine examples of manifolds obeying the condition $(ii)$ in Theorem~\ref{main-limit-1}. Observe first that this set of assumptions is naturally satisfied by complete noncompact Riemannian manifolds with nonnegative Ricci curvature that are asymptotic to warped products of the form $\dd \rho \otimes \! \dd\rho + \rho^{2\alpha} g_{N}$ with nonnegative Ricci curvature. These warped product metrics arise as models in the celebrated \cite{Che-Cold}.
Moreover, $(ii)$ encompasses the case of the so called ALF and ALG manifolds, arising as models of gravitational instantons. These manifolds, originally introduced by Hawking in \cite{hawking_instantons}, are widely studied for reasons lying at the intersection of Riemannian Geometry, Analysis and Mathematical Physics. Dropping any attempt to be complete, we address the interested reader to~\cite{minerbe2} for a nice mathematical presentation of this subject, as well as to the classification results contained in~\cite{chen-instantons1, chen-instantons2, chen-instantons3} and in the PhD thesis~\cite{chen-instantons}.

Concerning the relation of the assumption $(i)$ with $(ii)$, we observe that the existence of a global Isoperimetric Inequality is excluded if the assumption $(ii)$ is in force with $b < n$. Indeed, if \eqref{iso-cond-intro} holds, then by the arguments in~\cite{carron-iso-eucl}
one can conclude that the volume of geodesic balls $\abs{B(x, r)}$ grows at least as $r^n$ (see also \cite[Proposition 3.1]{pigola-setti-troyanov} for a self contained proof).

\subsection{$p$-capacities of the obstacles and perimeter of the hulls.} After having established  Theorem~\ref{main-limit-1}, we pass to examine the relationship between the perimeter of $\Omega^*$ and the  $p$-capacity of $\Omega$, for $p$ close to $1$. The link between these two notions emerged explicitly and naturally 
in~\cite[Theorem 5.6]{Ago_Fog_Maz_2}, as a fundamental step in the proof of the Extended Minkowski Inequality. There it was shown that in $\R^n$ the $p$-capacity of a bounded set with smooth boundary $\Omega$ approximates the perimeter of $\Omega^*$ as $p \to 1^+$. Understanding to what extent this phenomenon remains true on more general Riemannian manifolds would pave the way to  natural extensions of fundamental geometric inequalities.

On this regard it is worth noticing that without suitable assumptions on either the curvature or the volume growth  of the underlying manifold, the convergence of the $p$-capacities of $\Omega$ to the perimeter of $\Omega^*$ is not ensured, even in frameworks where the existence of a bounded strictly outward minimising hull is guaranteed. In Example~\ref{cigar} it is observed that any bounded subset $\Omega$ with finite perimeter of the higher dimensional version of the Hamilton's cigar~\cite{hamilton-cigar} admits a bounded strictly outward minimising hull. On the other hand the $p$-capacity of $\Omega$ is easily seen to vanish for every $p > 1$. The manifold in this example has nonnegative Ricci curvature and \emph{linear} volume growth. In particular, it does not satisfy neither $(i)$ nor $(ii)$ in a sharp way, since the exponent $b$ in~\eqref{minerbe} is not allowed to be $1$. Actually, we prove that if either $(i)$ or $(ii)$ is satisfied then the claimed convergence takes place for any bounded set with $\mathscr{C}^{1, \alpha}$-boundary. This is the content of our second main result. 
 
\begin{theorem}
\label{main-limit-2}
Let $(M, g)$ be a complete noncompact Riemannian manifold satisfying either assumption $(i)$ or $(ii)$ in Theorem \ref{main-limit-1} and let $\Omega \subset M$ be an open bounded subset with  $\mathscr{C}^{1, \alpha}$-boundary. Then
\begin{equation}
\label{limit-th}
\lim_{p \to 1^+} \capa_p(\Omega) = \abs{\partial \Omega^*}.
\end{equation}  
\end{theorem}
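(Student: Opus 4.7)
\emph{Plan.} The plan is to prove $\lim_{p \to 1^+}\capa_p(\Om) = \abs{\pa\Om^*}$ by establishing the two inequalities separately, leveraging Theorem~\ref{main-limit-1} (in particular characterisation $(B)$, which yields $P(F) \geq \abs{\pa\Om^*}$ for every finite-perimeter $F \supseteq \Om$).

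\emph{Upper bound.} Under $(i)$ or $(ii)$, the Huisken--Ilmanen weak IMCF starting from $\pa\Om^*$ can be constructed as a proper function $w\colon M\setminus\Om^* \to [0,\infty)$ with $w=0$ on $\pa\Om^*$ and $\abs{\pa\{w\leq s\}} = e^s\abs{\pa\Om^*}$. For small $T>0$, take the test function $u_T$ equal to $1$ on $\Om^*$ and to $\max(0,\ 1-w/T)$ on $M\setminus\Om^*$; since $\Om\subseteq\Om^*$ and $u_T$ has compact support, $u_T$ is admissible for $\capa_p(\Om)$. Coarea gives
\begin{equation*}
\int_M \abs{\nabla u_T}^p\,\dd V \;=\; \frac{1}{T^p}\int_0^T\!\int_{\{w=s\}}\!\abs{\nabla w}^{p-1}\,\dd\Haus^{n-1}\,\dd s,
\end{equation*}
and since $\abs{\nabla w}$ is bounded on the compact slab $\{w\leq T\}$, dominated convergence sends the inner integral to $\abs{\pa\{w\leq s\}}=e^s\abs{\pa\Om^*}$ as $p\to 1^+$. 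Hence $\limsup_{p\to 1^+}\capa_p(\Om)\leq T^{-1}(e^T-1)\abs{\pa\Om^*}$, and $T\to 0^+$ yields the upper bound.

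\emph{Lower bound.} Let $u_p$ be the $p$-capacitary potential of $\Om$, existing thanks to $(i)$ or $(ii)$ and $\mathscr{C}^{1,\alpha}$ up to $\pa\Om$ by the boundary regularity assumption; after truncation, $0\leq u_p\leq 1$ with $u_p\equiv 1$ on $\Om$. Setting $m_p(t):=\abs{\{u_p>t\}}$, the coarea formula combined with H\"older's inequality on each level set (splitting $1=\abs{\nabla u_p}^{(p-1)/p}\abs{\nabla u_p}^{-(p-1)/p}$) gives
\begin{equation*}
\int_M \abs{\nabla u_p}^p\,\dd V \;\geq\; \int_0^1 \frac{P(\{u_p>t\})^p}{(-m_p'(t))^{p-1}}\,\dd t.
\end{equation*}
For $t\in(0,1)$ one has $\{u_p>t\}\supseteq\Om$, so Theorem~\ref{main-limit-1} yields $P(\{u_p>t\})\geq\abs{\pa\Om^*}$. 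Restricting to $(\epsilon,1)$ and applying Jensen's inequality (the function $x\mapsto x^{-(p-1)}$ is convex) produces
\begin{equation*}
\capa_p(\Om) \;\geq\; \abs{\pa\Om^*}^p(1-\epsilon)^p\,m_p(\epsilon)^{-(p-1)}.
\end{equation*}
To conclude one needs $m_p(\epsilon)^{-(p-1)}\to 1$ for fixed $\epsilon>0$: under $(i)$, the isoperimetric inequality gives a global Sobolev inequality $\norm{u}_{L^{p^*}}\leq C(p)\norm{\nabla u}_{L^p}$ (with $p^*=np/(n-p)$) via Gagliardo's truncation, so Chebyshev bounds $m_p(\epsilon)\leq \epsilon^{-p^*}C(p)^{p^*}\capa_p(\Om)^{p^*/p}$ uniformly as $p\to 1^+$ (via the upper bound already established); case $(ii)$ is handled analogously through the Sobolev/Nash inequalities available under nonnegative Ricci and polynomial volume growth. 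Sending $p\to 1^+$ and then $\epsilon\to 0$ delivers the matching lower bound.

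\emph{Main obstacle.} The crux is the uniform estimate on $m_p(\epsilon)$ as $p\to 1^+$, which is precisely where the hypotheses $(i)$ or $(ii)$ are indispensable: Example~\ref{cigar} shows that on manifolds with merely linear volume growth $\capa_p(\Om)$ may vanish identically for every $p>1$, so no convergence to $\abs{\pa\Om^*}$ can hold without a sufficiently strong isoperimetric or volume-growth assumption. A secondary technical issue is the construction of the proper weak IMCF from $\pa\Om^*$ invoked in the upper bound, which requires adapting Huisken--Ilmanen's theory to the noncompact Riemannian setting by means of the geometric estimates supplied by $(i)$/$(ii)$.
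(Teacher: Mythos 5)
Your overall architecture (upper bound plus lower bound) matches the paper's, and the lower bound under assumption $(i)$ via coarea and Jensen is a legitimate alternative to the paper's estimate (which instead tests the $1$-capacity against $f^{q_p}$ and applies the $L^p$-Sobolev inequality \`a la Xu, producing the clean multiplicative bound $\capa_1(\Om)\leq q_p\,\mathrm{C}_{n,p}^{(p-1)/p}\capa_p(\Om)$). However, two of your steps have genuine gaps.

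\smallskip
\emph{Upper bound.} You build a test function from the arrival-time function $w$ of a proper weak IMCF \emph{emanating from $\pa\Om^*$}. This is overkill and, more importantly, not available as stated. First, under assumption $(i)$ the paper never establishes existence of a proper weak IMCF (the existence of $\Om^*$ in that case is proved by the direct method, Proposition~\ref{existence-iso}); the IMCF route is invoked only under $(ii)$ via \cite{mari-rigoli-setti}. Second, even under $(ii)$, the existence theory is for smooth initial data, whereas $\pa\Om^*$ is merely $\mathscr{C}^{1,1}$ away from $\pa\Om$ and can even carry a singular set when $n\geq 8$ (Theorem~\ref{regularity-hulls}). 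Third, the passage from $\int_{\{w=s\}}\abs{\nabla w}^{p-1}\,\dd\Haus^{n-1}$ to $\abs{\pa\{w\leq s\}}$ as $p\to 1^+$ is delicate for the weak IMCF: the level set $\{w=s\}$ may be fat, $\abs{\nabla w}$ may vanish on its boundary, and the identification $\Haus^{n-1}(\{w=s\})=P(\{w<s\})$ needs justification. The paper circumvents all of this with a self-contained two-line argument: take a tubular neighbourhood $\{d_E<2\epsilon\}$ of a \emph{smooth} exterior approximation $E\supset\Om^*$ (provided by Schmidt's Theorem~\ref{schmidt}), use the Lipschitz cut-off $\eta_\epsilon=\chi_\epsilon\circ d_E$, let $p\to 1^+$ by coarea, and then shrink $\epsilon$ and push $\abs{\pa E}\to\abs{\pa\Om^*}$. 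You should replace the IMCF step with this.

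\smallskip
\emph{Lower bound under $(ii)$.} You assert that case $(ii)$ ``is handled analogously through the Sobolev/Nash inequalities available under nonnegative Ricci and polynomial volume growth''. This is false: as the introduction to the paper notes, a global Euclidean-scaling Sobolev/isoperimetric inequality is \emph{excluded} whenever $(ii)$ holds with $b<n$ (since it would force Euclidean volume growth). So the Chebyshev step $m_p(\epsilon)\leq \epsilon^{-p^*}C(p)^{p^*}\capa_p(\Om)^{p^*/p}$ has no foundation in that regime. The paper's proof of Step~3, Case~2 is genuinely different: it relies on the pointwise decay estimate on the $p$-Green's function from \cite[Theorem~3.8]{mari-rigoli-setti}, which via comparison transfers to the explicit bound \eqref{bound-u} on $u_p$; this in turn controls $\int_M u_p\,\dd\mu$ uniformly as $p\to 1^+$ (via the Bishop--Gromov summation \eqref{series}), and H\"older on $u_p^{q_p}$ yields $\capa_1(\Om)\leq q_p(\int u_p)^{(p-1)/p}\capa_p(\Om)^{1/p}$. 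Incidentally, the same decay estimate \eqref{bound-u} could be fed directly into \emph{your} scheme (it bounds $m_p(\epsilon)\leq\abs{B(O,R)}$ for an $R$ uniform in $p$ close to $1$), which would make your Jensen route work under $(ii)$ too; but the mechanism is the Green's function decay, not a Sobolev inequality, and that needs to be said and proved.
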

 
The above result is a fundamental tool in the forthcoming~\cite{benatti-fog-maz}, where it is applied in order to deduce the following Minkowski inequality on asymptotically conical manifold with nonnegative Ricci curvature
\begin{equation}\label{minkowski}
\left(\frac{\abs{\partial \Omega^*}}{\abs{\Sf^{n-1}}}\right)^{\frac{n-2}{n-1}}\AVR^{\frac{1}{n-1}}\leq \frac{1}{\abs{\Sf^{n-1}}}\int\limits_{\partial \Omega} \left|{ \frac{\HH}{n-1}}\right| \dd \sigma,
\end{equation}
where with $\AVR$ we denote the Asymptotic Volume Ratio of $(M,g)$ defined as
\begin{equation}
\label{avr}
\AVR \, = \, \lim_{r \to + \infty} \frac{\abs{B(x, r)}}{\abs{\mathbb{B}^n} r^n}.
\end{equation}

\subsection{Sharp Isoperimetric Inequality on manifolds with nonnegative Ricci curvature.} The last result we present consists in an application of the existence theory of bounded strictly outward minimising hulls to isoperimetry. More precisely, we show that on a manifold with nonnegative Ricci curvature and Euclidean volume growth an exhaustion of outward minimising sets allows to combine the Willmore-type inequality discovered in \cite{Ago_Fog_Maz_1} with arguments pioneered in \cite{kleiner-isoperimetric} in order to achieve the \emph{sharp isoperimetric constant} on these manifolds, up to dimension $7$. This generalises to higher dimensions the three-dimensional situation foreseen and sketched by Huisken \cite{Hui_video} and fully proven in \cite{Ago_Fog_Maz_1}.  
 
 \begin{theorem}[Isoperimetric inequality on manifolds with nonnegative Ricci curvature]
\label{isoperimetric-theorem}
Let $(M, g)$ be a complete noncompact Riemannian manifold with nonnegative Ricci curvature and Euclidean volume growth, of dimension $3 \leq n \leq 7$.
Then, 
\begin{equation}
\label{isoperimetricf}
\inf \frac{\abs{\partial \Omega}^n}{\abs{\Omega}^{n-1}} \, = \left(\frac{\abs{\Sf^{n-1}}}{\abs{\mathbb{B}^n}}\right)^{\!n-1} \!\!\!\inf \int\limits_{\partial \Omega} \left\vert \frac{\HH}{n-1} \right\vert^{n-1} \!\!\dd\sigma  \, = \, \AVR \, \frac{\abs{\Sf^{n-1}}^{n}}{\abs{\mathbb{B}^n}^{n-1}} \, ,
\end{equation}
where the infima are taken over bounded open subsets $\Omega \subset M$ with smooth boundary.
In particular, the sharp isoperimetric inequality
\begin{equation}
\label{isoperimetric-inequality-intro}
\frac{\abs{\partial \Omega}^{n}}{\abs{\Omega}^{n-1}} \geq \AVR \frac{\abs{\Sf^{n-1}}^n}{\abs{\mathbb{B}^n}^{n-1}}
\end{equation}
holds for any bounded $\Omega \subset M$ with smooth boundary. Moreover, equality is achieved in \eqref{isoperimetric-inequality-intro} for some bounded $\Omega \subset M$ with smooth boundary if and only if $(M, g)$ is isometric to flat $\R^n$ and $\Omega$ is a metric ball.
\end{theorem}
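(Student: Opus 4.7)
The plan is to deduce the isoperimetric inequality \eqref{isoperimetric-inequality-intro} by combining the existence of strictly outward minimising hulls granted by Theorem \ref{main-limit-1} with the Heintze-Karcher inequality applied along the weak inverse mean curvature flow (IMCF), and then to derive the three-infima equality in \eqref{isoperimetricf} from this together with a parallel Willmore-type bound and the fact that large geodesic balls saturate all three quantities.

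\textbf{Step 1 (reduction to $\Om^*$).} Nonnegative Ricci curvature and Euclidean volume growth imply both hypotheses of Theorem \ref{main-limit-1}, so any bounded smooth $\Om\subset M$ admits a strictly outward minimising hull $\Om^*$. In dimensions $n\leq 7$, free-boundary regularity for the obstacle problem yields $\pa \Om^*\in \mathscr{C}^{1,1}$; its mean curvature vanishes on the free portion $\pa\Om^*\setminus\pa\Om$ and coincides with the (necessarily nonnegative) mean curvature of $\pa\Om$ on the contact portion $\pa \Om^*\cap\pa\Om$. A direct comparison then gives
\[
\frac{\abs{\pa\Om}^n}{\abs{\Om}^{n-1}} \,\geq\, \frac{\abs{\pa\Om^*}^n}{\abs{\Om^*}^{n-1}},\qquad \int_{\pa\Om}\left\vert\frac{\HH}{n-1}\right\vert^{n-1}\dd\sigma \,\geq\, \int_{\pa\Om^*}\left\vert\frac{\HH}{n-1}\right\vert^{n-1}\dd\sigma,
\]
and it suffices to establish both bounds for $\Om = \Om^*$.

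\textbf{Step 2 (monotonicity along IMCF).} Starting from $\pa\Om^*$ I run the weak IMCF, obtaining an exhausting family $\{E_t\}_{t\geq 0}$ of bounded strictly outward minimising sets with $E_0=\Om^*$, $\abs{\pa E_t}=\ee^{\,t}\abs{\pa\Om^*}$, and $\tfrac{d}{dt}\abs{E_t} = \int_{\pa E_t}\dd\sigma/\HH$. Since $\ric\geq 0$ and each $E_t$ is mean convex, the Heintze-Karcher inequality yields
\[
\int_{\pa E_t}\frac{n-1}{\HH}\,\dd\sigma\,\geq\,n\abs{E_t}, \qquad\text{hence}\qquad \frac{d}{dt}\log\abs{E_t}\,\geq\,\frac{n}{n-1}.
\]
Combined with the exponential growth of the perimeter this gives
\[
\frac{d}{dt}\log\frac{\abs{\pa E_t}^n}{\abs{E_t}^{n-1}} \,=\, n - (n-1)\frac{d}{dt}\log\abs{E_t}\,\leq\, 0,
\]
so the isoperimetric quotient is nonincreasing along the flow. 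A standard tensor computation, using $\ric\geq 0$ together with the pointwise bound $\abs{A}^2\geq\HH^2/(n-1)$ on the second fundamental form $A$ of $\pa E_t$, shows in turn that the Willmore functional $t\mapsto\int_{\pa E_t}\abs{\HH/(n-1)}^{n-1}\dd\sigma$ is likewise nonincreasing.

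\textbf{Step 3 (asymptotics and conclusion).} The decisive input, and the main technical hurdle, is the identification of the limits as $t\to+\infty$:
\[
\frac{\abs{\pa E_t}^n}{\abs{E_t}^{n-1}} \,\longrightarrow\, \AVR\,\frac{\abs{\Sf^{n-1}}^n}{\abs{\mathbb{B}^n}^{n-1}},\qquad \int_{\pa E_t}\left\vert\frac{\HH}{n-1}\right\vert^{n-1}\dd\sigma \,\longrightarrow\, \AVR\,\abs{\Sf^{n-1}}.
\]
Proving these requires showing that the IMCF leaves spread out asymptotically like geodesic balls in the sense imposed by the asymptotic volume ratio, a statement that blends outward minimality of $E_t$ with Bishop-Gromov comparison and relies in an essential way on the Euclidean volume growth hypothesis. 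Granted these limits, the two monotonicities of Step 2 deliver
\[
\frac{\abs{\pa\Om^*}^n}{\abs{\Om^*}^{n-1}} \,\geq\, \AVR\,\frac{\abs{\Sf^{n-1}}^n}{\abs{\mathbb{B}^n}^{n-1}},\qquad \int_{\pa\Om^*}\left\vert\frac{\HH}{n-1}\right\vert^{n-1}\dd\sigma \,\geq\, \AVR\,\abs{\Sf^{n-1}},
\]
which via Step 1 upgrade to the corresponding inequalities for general $\Om$, establishing \eqref{isoperimetric-inequality-intro} and the sharp Willmore-type bound. Saturation of these inequalities by exhaustions via large geodesic balls then gives the equality of the three infima in \eqref{isoperimetricf}. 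For the rigidity statement, equality in \eqref{isoperimetric-inequality-intro} forces Heintze-Karcher to be saturated on each $E_t$, and the classical rigidity of this inequality forces $(M,g)$ to be isometric to $\R^n$ and $\Om$ to be a metric ball.
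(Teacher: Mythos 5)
Your proposal takes a genuinely different route from the paper and contains a genuine gap at its center.

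The paper does not run the weak IMCF from $\partial\Omega^*$. Instead, it encloses $\Omega$ in a bounded strictly outward minimising set $U$ with \emph{smooth strictly mean-convex} boundary (built from the strictly outward minimising hull of a superlevel set of the Newtonian capacitary potential), considers \emph{constrained isoperimetric sets} $E_v \subset \overline U$, shows via Proposition~\ref{mean-curviso-prop} and Kasue's theorem that $\partial E_v$ is strictly mean-convex with the supremum of its mean curvature attained off $\partial U$, plugs this into the Willmore-type inequality of Theorem~\ref{willmore-th}, and then reads off a differential inequality for the isoperimetric profile $I_{\overline U}$ which it integrates against a conical comparison profile. Rigidity comes from the Bérard--Meyer asymptotically optimal isoperimetric inequality, which forces $\AVR = 1$ and hence triggers Bishop--Gromov rigidity. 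The weak IMCF is used elsewhere in the paper only to produce strictly outward minimising exhaustions, not to run any monotonicity.

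Your Step~3 is where the argument breaks down. You acknowledge that the identification of the limits of the isoperimetric quotient and the Willmore functional along the flow is ``the main technical hurdle,'' but you supply no argument, and in fact this is precisely the difficult step that the paper is designed to circumvent: as noted in the introduction, the authors could prove the second equality in~\eqref{isoperimetricf} via an explicit minimising IMCF-type sequence only in dimension three and only under an additional \emph{quadratic curvature decay} assumption. Showing that the weak IMCF level sets are ``asymptotically round'' enough to detect $\AVR$ in arbitrary dimension, with no a priori curvature decay, is an open problem rather than a technical verification. Without Step~3, the two monotonicity inequalities in Step~2 are directionless and do not yield a lower bound on the isoperimetric quotient.

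Step~2 itself also has unresolved issues. The weak IMCF is not smooth: $|E_t|$ can jump at fattening times, so the identity $\tfrac{d}{dt}|E_t| = \int_{\partial E_t}\dd\sigma/\HH$ and the ensuing ODE computation need to be justified in the weak setting, as does the application of Heintze--Karcher on $\mathscr{C}^{1,\alpha}$ level sets where $\HH$ is only an $L^1$ density. More seriously, the claimed ``standard tensor computation'' showing that $\int_{\partial E_t}|\HH/(n-1)|^{n-1}\dd\sigma$ is nonincreasing along IMCF is not standard; what Huisken--Ilmanen prove is monotonicity of the \emph{Hawking mass} in dimension three, a genuinely different quantity, and no analogue for the Willmore functional in general dimension is established in the paper or cited. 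Finally, your rigidity argument via saturation of Heintze--Karcher on each $E_t$ would give local roundness but not, without further work, global flatness; the paper's Bérard--Meyer route to $\AVR=1$ is both cleaner and what actually closes the argument.
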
 

The proof of the above result heavily relies on the existence of an exhaustion of $M$ by bounded subsets with mean-convex boundaries. In analogy with what has been observed for strictly outward minimising exhaustions, it is a difficult task to decide a priori whether a manifold admits or not such a structure.  

This is how Theorem \ref{main-limit-1} will enter the argument, since it will suffice to take the (strictly) outward minimising hull of any smooth enough exhausting sequence of bounded subsets to produce the desired mean-convex exhaustion. Indeed, these hulls are immediately seen to be mean-convex by means of a standard variational argument.
Observe that \eqref{isoperimetricf} actually yields more than just providing an isoperimetric inequality that is sharp on \emph{any} of the manifolds satisfying the assumptions above, in that it also shows the sharpness of the Willmore-type inequality proved in~\cite{Ago_Fog_Maz_1}. In that paper, the authors were able to prove the second equality in \eqref{isoperimetricf} by exhibiting an explicit minimising sequence under the additional assumption of \emph{quadratic curvature decay}. The above result in particular implies that such assumption can be removed at least if the dimension is smaller than eight. 
Moreover, our argument easily combines with the Bishop-Gromov's Theorem and an asymptotically optimal Isoperimetric Inequality \cite{berard-meyer} to infer the  strong rigidity statement. 

A classical consequence of the Isoperimetric Inequality is the Faber-Krahn inequality for the first Dirichlet eigenvalue of the Laplacian.
The derivation of such estimate is easily shown to hold up also  in a manifold $(M, g)$ of dimension $3 \leq n \leq 7$ with nonnegative Ricci curvature and Euclidean volume growth, where it yields
\begin{equation}
\label{faberf-intro}
\lambda_1(\Omega) \geq \AVR^{2/n} \lambda_1(\mathbb{B}_v)
\end{equation} 
for any bounded $\Omega \subset M$ with smooth boundary.
In the above formula, $\mathbb{B}_v$ denotes a metric ball in flat $\R^n$ of the same volume as $\Omega$ in $(M, g)$,
while $\lambda_1(\Omega)$ and $\lambda_1(\mathbb{B}_v)$ denote the first eigenvalue of $\Omega$ and $\mathbb{B}_v$ with respect to $g$ and the flat metric of $\R^n$ respectively. A rigidity statement characterising the equality case in \eqref{faberf-intro} will thus be derived from that of Theorem \ref{isoperimetric-theorem}.
 This will be the content of Theorem \ref{faber-th} below.

%
%
%
%

\subsection*{Summary.} This paper is structured as follows. Section \ref{sec:strictly-outward} is mostly devoted to study the relation, in a general complete noncompact Riemannian manifold, between the existence of a maximal volume solution to the least area problem with obstacle and the notion of the strictly outward minimising hull. In Section \ref{sec:families}, we prove  Theorem \ref{main-limit-1}. Here, precisely in Subsection \ref{sub-imcf}, we recall the notion of Weak Inverse Mean Curvature Flow, and detail some of the main applications in the present framework. More precisely, we will be able to get, in addition to its very definition $(A)$ and its variational characterisation $(B)$ also the third geometric characterisation $(C)$ of the strictly outward minimising hull as the interior of the $0$-sublevel set of the arrival time function for the weak IMCF starting at $\pa\Omega$ (see Proposition~\ref{imcf-hulls}). In Section \ref{sec:p-lim} we prove Theorem \ref{main-limit-2}. Finally, in Section \ref{sec:isoperimetric}, we prove Theorem \ref{isoperimetric-theorem}.

The paper ends with two Appendices. In the first one we prove an existence and uniqueness theorem for the $p$-capacitary potential of a bounded set $\Omega$ with $\mathscr{C}^{1, \alpha}$-boundary sitting inside a complete noncompact  manifold admitting a positive $p$-Green's function that vanishes at infinity, that is auxiliary in the proof of Theorem \ref{main-limit-2}.  In the second one, we provide a proof of a slightly more general version of the so called P\'{o}lya-Szeg\"o Principle, allowing us to deduce the Faber-Krahn inequality from Theorem \ref{isoperimetric-theorem}.

}

%
%

\bigskip

\begin{center}
{\emph{Added note}}
\end{center}
During the redaction of the manuscript, it was pointed out to the authors that the Isoperimetric Inequality resulting from Theorem~\ref{isoperimetric-theorem} already appeared in the very recent preprint by S. Brendle~\cite{brendle-isoperimetric}. The two proofs are however clearly different in spirit and the results as well have different features. Both theorems are  extensions to higher dimension of the $3$-dimensional case, proven in~~\cite[Theorem 1.8]{Ago_Fog_Maz_1}. The most evident advantage of Brendle's approach is that it is not subject to any dimensional restriction. On the other hand, using our methods, it is quite easy to provide the Isoperimetric Inequality with the natural rigidity statement, highlighting at the same time the relationship between the optimal isoperimetric constant and the infimum of the Willmore-type functional.

\bigskip

\textbf{Acknowledgements.} The authors are grateful to G. Antonelli and G. P. Leonardi for the very helpful discussions had 
during the preparation of the manuscript, to L. Mari for having clarified to the authors some aspects of \cite{mari-rigoli-setti}, to E. Spadaro for having brought to their attention the paper \cite{mondino-spadaro} and discussed with them an important result in \cite{focardi-spadaro}, and to D. Semola for having pointed out the preprint \cite{brendle-isoperimetric}.
They also thank A. Carlotto, A. Malchiodi and G. Huisken for their interest in this work .

The authors are members of Gruppo Nazionale per l'Analisi Matematica, 
la Probabilit\`a e le loro Applicazioni (GNAMPA), which is part of the 
Istituto Nazionale di Alta Matematica (INdAM),
and they are partially funded by the GNAMPA project 
``Aspetti geometrici in teoria del potenziale 
lineare e nonlineare''.

\section{The strictly outward minimising hull of a set with finite perimeter}
\label{sec:strictly-outward}

\subsection{Preliminaries}
\label{prel-perimeter}
In stating the following main concepts in the theory of finite perimeter sets in Riemannian manifolds, we are adapting the discussion found in \cite[Section 1]{miranda-bv} about BV-functions. We indicate also \cite[Section 1]{ambrosio-magnani}
for a more general introduction, that in particular applies to our setting.

Let $A \subseteq M$ be an open set. Then, we let $P(E, A)$ denote the De Giorgi's \emph{relative perimeter} of the measurable set $E$ in $A$. It is defined as 
\begin{equation}
\label{perimeter}
P(E, A) = \sup\left\{\int_E \dive\, T \dd\mu \quad \Big\vert \quad T \in \Gamma_c(TA) , \, \,\sup_A \, \abs{T} \leq 1\right\},
\end{equation}
where 
$T \in \Gamma_c(TA)$ if it is a section of the tangent bundle $TA$ of $A$ with compact support.
When $A = M$, we just talk about the perimeter of $E$ and we denote it by $P(E)$. 

If \eqref{perimeter} is finite for any bounded $A \subset M$ then, for any $\omega \in \Gamma_c(T^*M)$, we have that $(\D \chi_E, \omega)$ defines a Radon measure, where $\D \chi_E \in (\Gamma_c(T^*M))'$ denotes the distributional gradient of $\chi_E$, the characteristic function of $E$. Moreover, the total variation $\abs{\D \chi_E}$ of $\D \chi_E$ my be identified with the perimeter of $E$ by $\abs{\D \chi_E}(A) = P(E, A)$ for any open bounded $A \subset M$.   

The De Giorgi's \emph{reduced boundary} $\partial^* E$ can be thus defined as
\begin{equation}
\label{reduced}
\partial^* E = \left\{x \in M \quad \Big\vert \quad \lim_{r \to 0^+}\frac{\D \chi_E(B(x, r))}{\abs{\D \chi_E}(B(x, r))} \,\, \text{exists and belong to} \,\, \Sf^{n-1} \right\}.
\end{equation} 
\smallskip

We define now the \emph{measure theoretic interior} of a measurable set $E$  as the points of density one for $E$, namely
\begin{equation}
\label{interior}
\text{Int} (E) = \left\{x \in M \quad \Big\vert \quad \lim_{r \to 0^+} \frac{\abs{E \cap B(x, r)}}{\abs{B(x, r)}}=1\,\right\}.
\end{equation}
It follows from Lebesgue Differentiation Theorem that 
\begin{equation}
\label{representative}
\abs{E \Delta \, \text{Int} (E)} = 0,
\end{equation}
see \cite[Theorem 5.16]{maggi}.
Importantly, 
a set with finite perimeter $E$ satisfies,
\begin{equation}
\label{topint}
\partial \,\text{Int} (E) = \overline{\partial^* E},
\end{equation}
that is, the topological boundary of the measure theoretic interior of a set with finite perimeter coincides with the closure of its reduced boundary. We address the reader to \cite[Theorem 10]{caraballo} for a proof of this nice property.

\subsection{Basic properties of outward minimising sets}
We recall, according to \cite{Hui_Ilm}, the notion of  \emph{outward} and \emph{strictly outward minimising sets}.
\begin{definition}[Outward minimising and strictly outward minimising sets]
\label{outward-def}
Let $(M, g)$ be a complete Riemannian manifold. Let $E \subset M$ be a bounded set with finite perimeter. We say that $E$ is \emph{outward minimising} if for any $F \subset M$ with $E \subseteq F$ we have $P(E) \leq P(F)$. We say that $E$ is \emph{strictly outward minimising} if it is outward minimising and any time $P(E) = P(F)$ for some $F \subset M$ with $E \subseteq F$ we have $\abs{F \setminus E} = 0$.
\end{definition}   
\begin{remark} 
\label{zero-set}
It is easy to show that the notions of outward minimising and strictly outward minimising are stable under zero-measure modification. We are frequently making use of this fact in the rest of the paper.
\end{remark}
It is well known that locally area minimising sets satisfy upper and lower density estimates. It is then not surprising to realise that outward minimising sets, that can be thought as one sided minimisers, satisfy upper density estimates. We state this fact in the following Lemma, whose proof is a straightforward adaptation of~\cite[Theorem 16.14]{maggi}. The arguments employed are easily transported to a Riemannian ambient due to their local nature.
\begin{lemma}
\label{density-lemma}
Let $(M, g)$ be a complete Riemannian manifold, and let $E \subset M$ be an outward minimising set. Then, there exists $r_0 > 0$ such that for any $0 < r < r_0$ and $x \in \overline{\partial^* E}$ there holds
\begin{equation}
\label{upper}
\frac{\abs{E \cap B(x, r)}}{\abs{B(x, r)}} \leq \mathrm{C}
\end{equation}
for some constant $0< \mathrm{C} < 1$ independent of $x$.
\end{lemma}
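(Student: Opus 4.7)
The plan is to adapt the strategy of \cite[Theorem 16.14]{maggi} from $\R^n$ to the Riemannian setting, using a local isoperimetric inequality on small geodesic balls. The argument rests on three ingredients: a perimeter comparison coming from the outward minimality of $E$, the isoperimetric inequality applied to $(M \setminus E) \cap B(x,r)$, and the integration of a one-dimensional ODE for the volume function $v(r) := \abs{B(x, r) \setminus E}$.

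First, I would compare $E$ against the competitor $F := E \cup B(x, r)$. For a.e.\ $r > 0$, the structure of reduced boundaries of unions yields
\begin{equation*}
P(E \cup B(x, r)) = P(E; M \setminus \overline{B(x, r)}) + \Haus^{n-1}(\partial B(x, r) \cap E^{(0)}),
\end{equation*}
where $E^{(0)}$ denotes the set of density-zero points of $E$, while $P(E) = P(E; M \setminus \overline{B(x, r)}) + P(E; B(x, r))$. Since $E \subseteq F$, the outward minimality $P(E) \leq P(F)$ reduces to $P(E; B(x, r)) \leq \Haus^{n-1}(\partial B(x, r) \cap E^{(0)})$. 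Moreover, $(M \setminus E) \cap B(x, r)$ has perimeter $P(E; B(x, r)) + \Haus^{n-1}(\partial B(x, r) \cap E^{(0)})$, and on a Riemannian manifold the uniform isoperimetric inequality $\abs{F}^{(n-1)/n} \leq C_{\mathrm{iso}} P(F)$ holds for sets contained in geodesic balls of sufficiently small radius (for instance, via Croke's local inequality, or by pullback to a Euclidean ball through normal coordinates). Combining this with the previous bound and the coarea identity $v'(r) = \Haus^{n-1}(\partial B(x, r) \cap E^{(0)})$ a.e.\ gives the differential inequality
\begin{equation*}
v(r)^{(n-1)/n} \leq 2\, C_{\mathrm{iso}}\, v'(r).
\end{equation*}

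Second, I integrate: provided $v(r) > 0$ for every $r > 0$, dividing by $v^{(n-1)/n}$ and integrating from zero yields $v(r)^{1/n} \geq r/(2 n\, C_{\mathrm{iso}})$, hence $v(r) \geq c\, r^n$ with $c > 0$ depending only on $n$ and $C_{\mathrm{iso}}$. The required positivity of $v$ is precisely where the hypothesis $x \in \overline{\partial^* E}$ enters: by the identity \eqref{topint}, one has $\overline{\partial^* E} = \partial\, \mathrm{Int}(E)$, so every ball $B(x, r)$ meets $\partial^* E$; at any such reduced-boundary point $y$, $M \setminus E$ has density $1/2$ and therefore positive measure on a small ball $B(y, s) \subset B(x, r)$, giving $v(r) > 0$. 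Combining gives the bound
\begin{equation*}
\frac{\abs{E \cap B(x, r)}}{\abs{B(x, r)}} = 1 - \frac{v(r)}{\abs{B(x, r)}} \leq 1 - \frac{c\, r^n}{\abs{B(x, r)}} \leq 1 - \frac{c}{2 \abs{\mathbb{B}^n}} < 1,
\end{equation*}
once $r_0$ is chosen small enough to ensure $\abs{B(x, r)} \leq 2 \abs{\mathbb{B}^n} r^n$.

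The main subtlety I expect is the uniformity of $r_0$ and of the final constant $\mathrm{C}$ with respect to $x$. Each of the ingredients used above (the perimeter decomposition for a.e.\ $r$, the local isoperimetric constant $C_{\mathrm{iso}}$, and the volume comparison $\abs{B(x, r)} \leq 2 \abs{\mathbb{B}^n} r^n$) is uniform only on balls of sufficiently small radius depending on local injectivity radius and curvature bounds. Since $E$ is bounded, $\overline{\partial^* E}$ is compact, and a uniform $r_0$ can be extracted by a standard covering argument applied to a neighborhood of $\overline{\partial^* E}$, yielding the desired constant $\mathrm{C} \in (0,1)$ independent of $x$.
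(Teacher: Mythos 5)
Your argument is correct and is precisely the adaptation of Maggi's Theorem~16.14 that the paper intends (the paper gives no written proof, only a pointer to that reference and the remark that the arguments are local). The competitor $E \cup B(x,r)$, the perimeter decomposition for a.e.\ $r$, the local isoperimetric inequality applied to $B(x,r)\setminus E$, the resulting differential inequality for $v(r)=\abs{B(x,r)\setminus E}$ together with the positivity of $v$ coming from $x\in\overline{\partial^* E}$, and the covering argument to get uniform $r_0$ and $\mathrm{C}$ over the compact set $\overline{\partial^* E}$ are exactly the ingredients of that proof.
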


In \cite[Theorem 6, (i)]{caraballo}, the author showed that if a set with finite perimeter satisfies relaxed density estimates then its measure theoretic interior and exterior are open. We report (a part of) his argument, in order to clarify that the upper density estimate implies that the measure theoretic interior is an open set.
\begin{lemma}
\label{openness-lemma}
Let $(M, g)$ be a complete Riemannian manifold. Let $E \subset \R^n$ be a set with finite perimeter, and suppose that for any $x \in \overline{\partial^* E}$
\begin{equation}
\label{condition}
\lim_{r \to 0^+}\frac{\abs{E \cap B(x, r)}}{\abs{B(x, r)}} < \delta < 1
\end{equation}
uniformly on $\overline{\partial^* E}$.
Then $\mathrm{Int} \, (E)$ is open.
\end{lemma}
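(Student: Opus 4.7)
My approach would be to realise $\mathrm{Int}(E)$ as a union of connected components of the open set $U := M \setminus \overline{\partial^* E}$, and hence as an open set. The main ingredient is that the characteristic function $\chi_E$ is BV and its distributional gradient is concentrated on the reduced boundary, so on $U$ the function $\chi_E$ has vanishing distributional derivative.

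First I would invoke the standard identification of the total variation measure, $\abs{\D \chi_E} = \mathcal{H}^{n-1} \res \partial^* E$, from which $\mathrm{supp}\,\abs{\D\chi_E} = \overline{\partial^* E}$. Restricting to the open set $U$, the distributional derivative of $\chi_E$ vanishes there. Since $\chi_E$ is BV with vanishing distributional gradient on each connected component $C$ of $U$, a standard partition-of-unity argument, reducing to the Euclidean statement in local coordinate charts, yields that $\chi_E$ is almost everywhere equal on $C$ to one of the two constants $0$ or $1$. Call $C$ \emph{full} when that constant is $1$.

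Now for any $x$ lying in a full component $C$, choose $r>0$ so small that $B(x,r) \subseteq C$; then $\abs{E \cap B(x,r)} = \abs{B(x,r)}$, and hence $x \in \mathrm{Int}(E)$. Conversely, in a component where $\chi_E \equiv 0$ almost everywhere, the density of $E$ vanishes at every point, so no such point lies in $\mathrm{Int}(E)$. Finally, the hypothesis \eqref{condition} guarantees that every point of $\overline{\partial^* E}$ has density of $E$ at most $\delta < 1$, and is therefore also excluded from $\mathrm{Int}(E)$. Putting the three observations together gives
\[
\mathrm{Int}(E) \;=\; \bigcup \bigl\{\, C \,:\, C \text{ a connected component of } U \text{ on which } \chi_E \equiv 1 \text{ a.e.} \,\bigr\},
\]
which is manifestly open.

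The only step that I expect to require some care is the a.e.\ constancy of $\chi_E$ on each connected component of $U$ in the Riemannian setting, since it is there that one transports the classical Euclidean BV fact to $(M,g)$. This can be handled by covering $C$ with normal coordinate balls, applying the Euclidean statement inside each chart, and propagating the value across overlapping charts by connectedness. Once this piece is in place, the rest of the argument is little more than a bookkeeping of densities and amounts to the three observations above.
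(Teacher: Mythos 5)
Your proposal is correct, but it follows a genuinely different route from the paper's. The paper argues pointwise: given $y \in \mathrm{Int}(E)$, it first observes that \eqref{condition} forces $y \notin \overline{\partial^* E}$, then picks a small ball $B(y,r')$ disjoint from $\overline{\partial^* E}$ in which $E$ has positive measure, and uses the \emph{relative isoperimetric inequality} to conclude that the complement of $E$ must have zero measure in that ball (otherwise there would be reduced boundary inside, contradicting the choice of $r'$). From there the density-one property propagates to all of $B(y,r')$. Your argument is instead global and structural: you use De Giorgi's structure theorem to identify $\mathrm{supp}\,\abs{\D\chi_E}$ with $\overline{\partial^*E}$, deduce that $\chi_E$ has vanishing distributional gradient on $U = M \setminus \overline{\partial^*E}$, invoke a.e.\ constancy of BV functions with zero gradient on connected open sets, and then characterise $\mathrm{Int}(E)$ exactly as the union of the full components of $U$. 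Both proofs hinge on a connectedness/constancy principle (in the paper it is packaged into the isoperimetric step on a ball; in yours it is stated abstractly over components of $U$), and both use \eqref{condition} only to exclude points of $\overline{\partial^*E}$ from $\mathrm{Int}(E)$. What your version buys is a cleaner and slightly stronger conclusion — an explicit description of $\mathrm{Int}(E)$ as a union of components of an open set — at the price of invoking the full structure theorem for sets of finite perimeter (including the density estimates at reduced boundary points needed to see that $\mathrm{supp}\,\abs{\D\chi_E}$ is not strictly smaller than $\overline{\partial^*E}$), whereas the paper's argument stays closer to elementary density estimates and the relative isoperimetric inequality in small balls.
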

\begin{proof}
Let $y \in \text{Int} (E)$. First observe that $y \notin \overline{\partial^* E}$, since otherwise \eqref{condition} would contradict the condition \eqref{interior} defining $\text{Int} (E)$. We now construct a ball centered at $y$ fully contained in $\text{Int} (E)$. Let $d =\text{dist}(y, \overline{\partial^* E})$. By the definition \eqref{interior} of $\text{Int} (E)$, 
\[
\frac{\abs{E \cap B(y, r)}}{\abs{B(y, r)}} > 0
\]
for some $r' \in (0, d)$. Then we can deduce 
\begin{equation}
\label{compl-zero}
\frac{\abs{(M \setminus E \cap B(y, r')}}{\abs{B(y, r')}} = 0,
\end{equation}
since, otherwise, the relative isoperimetric inequality \cite[Proposition 12.37]{maggi} would yield $\abs{\partial^* E \cap B(y, r')} > 0$, in turn resulting in the contradiction $ \text{dist}(y, \overline{\partial^* E}) < d $. Observe that such inequality clearly holds for small balls also in Riemannian manifolds, and it can be shown just by performing the computations in a local chart. The above argument in particular works up to eventually choose a suitable, uniform $d'$ smaller than $d$. 
By \eqref{compl-zero}, for any $y \in B(y, r')$, we have
\[
\frac{\abs{E \cap B(z, r'')}}{\abs{B(z, r'')}} = 1
\]
for any $r'' \in (0, r' - \text{dist}(z, y))$. This clearly implies that $B(y, r') \subset \text{Int} (E)$.
\end{proof}
As a direct consequence of \eqref{representative}, Lemma \ref{density-lemma} and Lemma \ref{openness-lemma}, we obtain the  remarkable property of outward minimising sets of admitting an open representative.
\begin{proposition}
\label{openness-prop}
Let $(M, g)$ be a complete Riemannian manifold. Let $E \subset M$ be outward minimising. Then ${\rm Int}(E)$ is open. 
\end{proposition}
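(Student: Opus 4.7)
The plan is to obtain the proposition as a near-immediate consequence of the two preceding lemmas combined with the equivalence \eqref{representative}. The strategy is to verify that an outward minimising set $E$ satisfies the hypothesis of Lemma~\ref{openness-lemma}, and then invoke that lemma directly.

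First I would apply Lemma~\ref{density-lemma} to the outward minimising set $E$, obtaining a radius $r_0 > 0$ and a constant $0 < C < 1$ such that
\begin{equation}
\frac{|E \cap B(x,r)|}{|B(x,r)|} \leq C \quad \text{for all } x \in \overline{\partial^* E}, \ 0 < r < r_0.
\end{equation}
Passing to the limit as $r \to 0^+$ (and using that the limit in \eqref{interior}/\eqref{condition} is understood as the density, so the uniform bound by $C < 1$ for small radii is exactly the hypothesis needed), this yields that for every $x \in \overline{\partial^* E}$ the density of $E$ at $x$ stays below $\delta := C < 1$, uniformly on $\overline{\partial^* E}$. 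This is precisely condition \eqref{condition}.

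Next I would feed this information into Lemma~\ref{openness-lemma}, which then concludes that $\mathrm{Int}(E)$ is open. Since $E$ has finite perimeter (as part of the definition of outward minimising in Definition~\ref{outward-def}), the lemma applies directly. No further argument is needed: the openness of $\mathrm{Int}(E)$ is the claim, and by \eqref{representative} this open set differs from $E$ by a null set, which is the statement that outward minimising sets admit an open representative.

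I do not anticipate any real obstacle: the proof is essentially bookkeeping, and the only point of care is checking that the upper density estimate from Lemma~\ref{density-lemma} is genuinely strong enough to trigger Lemma~\ref{openness-lemma}. Since Lemma~\ref{density-lemma} delivers a uniform strict upper bound $C < 1$ valid on all of $\overline{\partial^* E}$ for all sufficiently small radii, and the proof of Lemma~\ref{openness-lemma} relies exactly on such a uniform density gap to rule out reduced boundary points inside $\mathrm{Int}(E)$ and to propagate full density to a ball via the relative isoperimetric inequality, the two fit together without loss.
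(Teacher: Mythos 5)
Your proposal is correct and is exactly the argument the paper has in mind: the proposition is stated to be ``a direct consequence of \eqref{representative}, Lemma~\ref{density-lemma} and Lemma~\ref{openness-lemma},'' and you have simply unpacked that chain. The only micro-nitpick is that setting $\delta := \mathrm{C}$ does not literally give the strict inequality in \eqref{condition}; choose any $\delta$ with $\mathrm{C} < \delta < 1$ instead.
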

\subsection{Maximal volume solutions to the least area problem with obstacle in Riemannian manifolds}
\label{sub-maximalvol}
Let $(M, g)$ be a complete noncompact Riemannian manifold of dimension $n \geq 2$. We are interested in the existence of a \emph{bounded} open set with finite perimeter $E$ solving the least area problem with obstacle $\Omega$, where $\Omega \subset M$ is a bounded open set with finite perimeter. 
\begin{definition}
\label{def-obst-pb}
We say that a bounded set $E$ solves the \emph{least area problem with obstacle $\Omega$} if it contains $\Om$ and satisfies 
\begin{equation}
\label{obstacle-pb}
P(E) = \inf\{P(F) \quad \vert \quad \Omega \subset F, F \, \text{bounded set with finite perimeter}\} \,. 
\end{equation} 
We say that a bounded set $\hat{E}$ is a~\emph{maximal volume solution of the least area problem with obstacle $\Omega$} if it contains $\Om$ and satisfies
\begin{equation}
\label{obstacle-pb-sup}
\abs{\hat{E}} = \sup\{\abs{E} \quad \vert \quad E \,\, \text{solves the least area problem with obstacle} \,\, \Omega\} \, .
\end{equation}
\end{definition}

Let us point out the most basic relation between strictly outward minimising sets and maximal volume solutions to the least area problem with obstacle.
\begin{remark}
\label{var-rem}
First, we observe, that bounded solutions to the least area problem with obstacle $\Omega$ are outward minimising, and maximal volume solutions are strictly outward minimising.
In particular, outward minimising sets $E$ are characterised by being solutions to the least area problem with obstacle $E$ itself. Interestingly, it is immediately seen that $E$ is strictly outward minimising  if and only if it is also a maximal volume solution to the least area problem with obstacle $E$.
\end{remark}
It is worth pointing out that a solution to \eqref{obstacle-pb} could clearly be non-unique, as there exist outward minimising sets that are not strictly outward minimising. As we are going to see, the definition of a solution $\hat E$ to problem~\eqref{obstacle-pb-sup} is in some sense crafted to overcome this issue.

\smallskip

In the Euclidean case, the existence of a maximal volume solution to the least area problem with obstacle $\Omega$ is an easy consequence of the Direct Method (compare with \cite{bassanezi-tamanini} and \cite{sternberg-williams_existence}). In a general noncompact Riemannian manifold, minimising sequences could fatally be unbounded, as the following examples show. 
\begin{example}[Cuspidal manifolds]
\label{example-cuspidal}
Let $(M, g)$ be a complete noncompact Riemannian manifold whose metric splits as $g = \dd\rho \otimes \dd\rho + e^{-2\rho}g_{\Sf^{n-1}}$ on $M \setminus B =[1, + \infty) \times \Sf^{n-1}$ for some precompact set $B \subset M$. Then, the area of $\partial\{\rho < r\}$ decays as $e^{-r}$, and in particular enveloping a bounded set $\Omega$ with $\{\rho < r\}$, for bigger and bigger $r$, we see that \begin{equation}
\inf\{P(F) \quad \vert \quad \Omega \subset F, F \, \text{bounded set with finite perimeter}\} = 0.
\end{equation} 
In particular, there cannot exist a solution to the least area problem with obstacle $\Omega$. Observe also that $(M, g)$ has finite volume, and thus it does not support an Euclidean-like isoperimetric inequality by \cite[Proposition 3.1]{pigola-setti-troyanov} nor the Ricci curvature is nonnegative, and thus coherently the assumptions of Theorem \ref{main-limit-1} are not satisfied.
\end{example}
\begin{example}[Manifolds with a cylindrical end]
\label{example-cylindrical}
Let $(M, g)$ be a complete noncompact Riemannian manifold whose metric splits as $g = d\rho \otimes d\rho + g_{\Sf^{n-1}}$ on $M \setminus B = [1, + \infty) \times \Sf^{n-1}$ for  some precompact set $B \subset M$. Let $\Omega$ be any bounded set with finite perimeter containing $B$. Then, any set of the form $B \cup \{\rho < r\}$ containing $\Omega$ solves the least area problem with obstacle $\Omega$, since the sets $\{\rho = r\}$ are even area minimising, but since $\abs{\{\rho = r_2\}} = \abs{\{\rho = r_1\}}$ and $\abs{B \cup \{\rho < r_2\}} = \abs{B \cup \{\rho < r_1\}} + (r_2 - r_1)$ for any $r_2 \geq r_1 \geq 1$, we see at once that
\begin{equation}
\sup\{\abs{E} \quad \vert \quad E \,\, \text{solves the least area problem with obstacle} \,\, \Omega\} = + \infty.
\end{equation}
In particular, it cannot exist a bounded~\emph{maximal volume solution to the least area problem with obstacle $\Omega$}.
Observe that $(M, g)$ has linear volume growth, and therefore by \cite[Proposition 3.1]{pigola-setti-troyanov} the $L^1$-Euclidean-like Sobolev inequality cannot hold nor \eqref{minerbe} holds.      
\end{example}
In the following basic result, we enucleate a necessary and  sufficient condition for obtaining a maximal volume solution to the least area problem with obstacle in a complete noncompact Riemannian manifold, namely the existence of an exhausting sequence of bounded strictly outward minimising sets.
Here, and in the rest of the paper, we say that a sequence of bounded open sets $\{U_j\}_{j \in \N}$ \emph{exhausts} a complete Riemannian manifold $(M, g)$ if $U_j \subset M$ for any $j\in \N$ and
\begin{equation}
\label{exhaustion-def}
M \, = \, \bigcup_{j=1}^{\infty} U_j.
\end{equation}
Observe that, in $\R^n$, an exhausting sequence of strictly outward minimising sets are given by balls. 
\begin{theorem}[Existence of maximal volume solutions to the least area problem in Riemannian manifolds]
\label{existence}
Let $(M, g)$ be a noncompact Riemannian manifold. Then, there exists an exhausting sequence of bounded strictly outward minimising sets if and only if there exists a maximal volume solution to the least area problem with obstacle $\Omega$ for any bounded $\Omega \subset M$ with finite perimeter.  
\end{theorem}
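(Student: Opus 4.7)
\emph{Proof outline.}
The plan is to prove the two implications separately.

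The easy direction is that the existence of maximal volume solutions implies the existence of a strictly outward minimising exhaustion, and it relies essentially on Remark \ref{var-rem}. Taking any standard exhaustion $\{\Omega_j\}$ of $M$ by bounded open sets with smooth boundary, the hypothesis produces for each $j$ a bounded maximal volume solution $\widehat{E}_j \supseteq \Omega_j$ to the least area problem with obstacle $\Omega_j$. By Remark \ref{var-rem} each $\widehat{E}_j$ is strictly outward minimising, and since $\Omega_j \subseteq \widehat{E}_j$ the family $\{\widehat{E}_j\}$ is itself an exhaustion of $M$.

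For the converse, given a bounded $\Omega$ with finite perimeter, I would first select a bounded strictly outward minimising set $U$ with $\Omega \subseteq U$, which exists since $\overline{\Omega}$ is compact and can be covered by finitely many elements of the given exhaustion (possibly after passing to a nested refinement). The central step is then a confinement property: any bounded solution $F$ to the least area problem with obstacle $\Omega$ satisfies $\abs{F \setminus U}=0$. To see this, I would combine the submodularity of the perimeter
\[
P(F \cap U) + P(F \cup U) \leq P(F) + P(U)
\]
with the two lower bounds $P(F \cap U) \geq P(F)$ (since $F \cap U$ is a competitor containing $\Omega$) and $P(F \cup U) \geq P(U)$ (by the outward minimality of $U$). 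These force equality throughout; in particular $P(F \cup U) = P(U)$, and the strict outward minimality of $U$ then yields $\abs{F \setminus U} = 0$.

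With this confinement in hand, the existence of a solution follows from the Direct Method applied within $U$: by intersecting with $U$, a minimising sequence can be reduced to subsets of the precompact set $U$, BV-compactness extracts an $L^1$-convergent subsequence, and the lower semicontinuity of the perimeter preserves minimality in the limit. For the maximal volume solution I would then pick a sequence of solutions $\{F_k\}$ with $\abs{F_k}$ approaching $\sup \{ \abs{F} : F \text{ a solution} \}$, which is finite and bounded above by $\abs{U}$ thanks to the confinement; extracting an $L^1$-limit $\widehat{E}$ and applying once more the lower semicontinuity of the perimeter together with the continuity of the volume under $L^1$-convergence produces the desired maximal volume solution. The main obstacle is precisely the confinement property: without the existence of a strictly outward minimising ``container'' $U$ for $\Omega$, minimising sequences may escape to infinity, as the cylindrical case in Example \ref{example-cylindrical} vividly illustrates. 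The submodularity/strict outward minimality argument sketched above is the mechanism through which the hypothesised exhaustion rules out this pathology.
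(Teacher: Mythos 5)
Your argument is correct and follows essentially the same path as the paper: the easy direction via Remark~\ref{var-rem} applied to obstacles taken from a standard exhaustion, and the hard direction by first trapping the minimising sequence inside a strictly outward minimising container $U$ via the submodularity estimate $P(F_j\cap U)\le P(F_j)$, then invoking the Direct Method, and finally using the strict outward minimality of $U$ to show volume-maximising sequences of solutions stay essentially inside $U$. Your three-inequality squeeze producing $P(F\cup U)=P(U)$ directly is a slightly more streamlined packaging of the confinement step that the paper reaches via the identity $P(U,E_j)=P(E_j,M\setminus U)$, but the underlying mechanism is identical; the only small omission (also glossed in the paper) is the zero-measure correction needed to guarantee that the $L^1$-limit actually contains $\Omega$.
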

\begin{proof}
One direction is almost trivial. Indeed if for any bounded $\Omega \subset M$ with finite perimeter there exists a maximal volume solution to the least area problem with obstacle $\Omega$, then, taking a sequence of geodesic balls $B(O, R_j)$ with $R_j \to \infty$ and considering the bounded maximal volume solution to the least area problem with obstacle $B(O, R_j)$ yields the desired sequence of strictly outward minimising sets (see the discussion after Definition~\ref{def-obst-pb}). Observe that $B(O, R_j)$ can be assumed to be of finite perimeter substantially by coarea formula, and that maximal volume solutions are obviously strictly outward minimising.
Let $\Omega \subset M$ be a bounded set with finite perimeter. Let $\{F_j\}_{j \in \N}$ be a minimising sequence for the least area problem with obstacle $\Omega$.
Define
\begin{equation}
\label{m}
m = \inf\{P(F) \quad \vert \quad \Omega \subset F, F \, \text{bounded set with finite perimeter}\}.
\end{equation}
Then, for any $\epsilon > 0$, there exists $j_\epsilon \in \N$ such that
\begin{equation}
\label{min-sequence}
m \leq P(F_j) \leq m + \epsilon
\end{equation}
for any $j \geq j_\epsilon$.
Let $U$ be an outward minimising set containing $\Omega$, that exists by assumption. 
By a standard set-theoretical property of the perimeter (see e.g.  \cite[Lemma 12.22]{maggi}), we have
\[
P(F_j \cap U) \leq P(F_j) + P(U) - P(F_j \cup U).
\]
Since $U$ is outward minimising, we have $P(U) \leq P(F_j)$, and then we deduce that 
\begin{equation}
\label{fjs}
P(F_j \cap U) \leq P(F_j). 
\end{equation}
In particular, since $\Omega \subset (F_j \cap U)$ we have
\[
m  \leq P(F_j \cap U) \leq m + \epsilon 
\]
for any $j \geq j_\epsilon$, that is, the sets $F_j \cap U$ form an equibounded minimising sequence.
Then, the Compactness Theorem \cite[Theorem 12.26]{maggi} yields a bounded set of finite perimeter $F$ such that $\chi_{F_j \cap U} \to \chi_F$ in $L^1$, possibly along a subsequence.  Moreover, by the lower semicontinuity of the perimeter \cite[Proposition 12.15]{maggi}, the set $F$ is a minimiser for the minimisation problem \eqref{obstacle-pb}. We are left to show that we can modify $F$ in order to obtain a set $E$ containing $\Omega$ with $P(E) = P(F)$. Actually, it suffices to define $E = F \cup (\Omega \setminus F)$. Clearly $\Omega \subseteq E$. Moreover, since $\Omega \subset F_j \cap U$ for any $j$, and $\chi_{F_j\cap U} \to \chi_U$ in $L^1$, we have $\abs{\Omega \setminus F}=0$. Since De Giorgi's perimeter is defined up to null sets, we have $P(E) = P(F)$.
\smallskip

Let now $\{E_j\}_{j \in \N}$ be a sequence of bounded sets with finite perimeter containing $\Omega$ with
\begin{equation}
\label{maximising}
\abs{E_j} \to \sup\{\abs{E} \quad \vert \quad E \,\, \text{solves the least area problem with obstacle} \,\, \Omega\}
\end{equation}
as $j \to + \infty$. 
In particular, $P(E_j) = m$ for any $j$.  
Let $U$ be a bounded \emph{strictly} outward minimising set containing $\Omega$. Then, as in \eqref{fjs}, we have $P(E_j \cap U) \leq P(E_j)$.
We thus have
\[
m \leq P(E_j \cap U) \leq P(E_j) = m.
\]
We deduce, by standard set operations \cite[Theorem 16.3]{maggi} that
$P(U, E_j) = P(E_j, M \setminus U)$. But then
\[
P(U \cup (E_j \setminus U)) = P(U) + P(E_j, M \setminus U) - P(U, E_j) = P(U),
\]
but then, since $U$ is strictly outward minimizing $\abs{E_j\setminus U} = 0$.
This implies that the sequence of $\{E_j\}_{j \in \N}$ is essentially bounded, and 
the Compactness Theorem for sets with finite perimeter then yields a bounded set $E'$ realising the supremum in \eqref{maximising}. Up to a zero-measure modification as before, we can also suppose $\Omega \subseteq E'$.
By the lower semicontinuity of the perimeter we also have
\[
m = \liminf_{j \to \infty} P(E_j \cap U) \geq P(E')  \geq m.
\]
We showed that $P(E') = m$ and its volume realises the supremum in \eqref{maximising}, that is, $E'$ is a maximal volume solution to the least area problem with obstacle $\Omega$.
\end{proof}

\begin{remark}
A perusal of the first part of the above proof shows that, if one is interested only in a solution to the least area problem with obstacle~\eqref{obstacle-pb}, then it is sufficient to check the existence of an exhausting sequence of outward minimising sets. This is done on the cylindrical manifolds of Example~\ref{example-cylindrical}, where the cross-sections give an exhausting sequence of outward minimising sets that are not strictly outward minimising. 
\end{remark}

\subsection{The strictly outward minimising hull of a set with finite perimeter} Aim of this section is to investigate the notion of \emph{smallest} strictly outward minimising envelope of a bounded set $\Omega$. More concretely, we consider the family of Strictly Outward Minimizing Bounded Envelopes of $\Om$
\[
{\rm{SOMBE}} \, (\Omega) = \{F\subset M \, \vert \, \Omega \subseteq F \, \mathrm{and} \, F \, \mathrm{is \, \, bounded \,\, and \,\, strictly \,\,   outward \,\,  minimising}\}
\]
and we look for solutions to the minimisation problem 
\begin{equation}
\label{smh}
\inf_{F \in \mathrm{SOMBE}(\Omega)} \abs{F} \, .
\end{equation}
We observe that as soon as a strictly outward minimising envelope of $\Om$ exists, it is readily seen that a set $E$ fulfilling 
\begin{equation}
\label{construction-hull}
|E| = \inf_{F \in \mathrm{SOMBE}(\Omega)} \abs{F} \, .
\end{equation}
can be constructed by taking a minimising sequence $\{E_j\}_{j \in \N}$ and defining  $E = \cap_{j = 1}^{\infty} E_j$. It is immediately checked that $E$ realises the infimum.
Moreover, if $F$ is another bounded set with finite perimeter realising \eqref{smh}, then clearly so does $E \cap F$, and so $\abs{E} = \abs{F} = \abs{E\cap F}$. As a consequence, we have $\abs{E \Delta F} = 0$, that is $E$ and $F$ differ by a negligible set.
\begin{definition}[Strictly outward minimising hull]
\label{smh-def}
Let $(M, g)$ be a complete Riemannian manifold, and let $\Omega \subset M$ be a bounded open set with finite perimeter. The strictly outward minimising hull of $\Om$ is the measure theoretic interior $\mathrm{Int}(E)$ of any set $E$ that solves the minimisation problem~\eqref{smh}. Such a set is well defined up to zero-measure modifications and will be denoted by $\Om^*$.
\end{definition}

\begin{remark}
In case $\mathrm{SOMBE}(\Omega)$ were empty, but there existed unbounded strictly outward minimising envelopes of $\Omega$, one should still be able to work out a suitable notion of strictly outward minimising hull, with possibly infinite volume but with finite perimeter. However, in order to prove our main results there is no real need to provide such generalisation, and this why we limit ourselves to describe \emph{bounded} strictly outward minimising hulls.
\end{remark}

\begin{remark}[Relations with other notions of hulls] 
In addition to the classical convex hull and to the strictly outward minimising hull above, several other notions of hulls, that can be interpreted as the smallest set containing a given one and fulfilling a certain property, are present in literature. 
As already briefly pointed out in the Introduction, the \emph{minimal hull} of \cite{bassanezi-tamanini}  can be directly compared with $\Omega^*$, at least within the geometry of flat $\R^n$. Indeed, one realises that such hull can be obtained by taking, in place of the  of the smallest strictly outward minimising envelope of a given set of finite perimeter $\Omega$ in the sense of \eqref{construction-hull}, the smallest outward minimising set containing $\Omega$, that will be in general only contained in $\Omega^*$. This can be effectively defined by replacing $\mathrm{SOMBE}(\Omega)$ with the collection of all the outward minimising envelopes of $\Omega$. According to \cite{bassanezi-tamanini}, the minimal hull minimises,  among the minimisers of the least area problem with obstacle $\Omega$, the Hausdorff distance from $\Omega$. The main difference with the strictly outward minimising hull lies here, since $\Omega^*$ on the other hand will be seen, in the theorem below, to maximise the volume among such solutions. This  difference will be somehow reflected in our proof, where the main issue will be showing that $\Omega^*$ is strictly outward minimising. This will easily yield the maximal volume property.

Another notion of geometric hull can be found in \cite{spadaro-hull}, where the author describes the \emph{mean-convex hull} of a set $\Omega \subset \R^n$. Such notion seems to share important similarities and differences with $\Omega^*$ above, and we think that understanding their relations and their possible applications may be very interesting. We refer the reader to the references therein for other possibly related notions, appeared in recent times.
\end{remark}

We are now ready to establish a fundamental correspondence between the notion of strictly outward minimising hull and the maximal volume solutions to the least area problem with obstacle. 
As a by-product we obtain that such a solution is essentially unique, whenever its existence is guaranteed for any obstacle.

\begin{theorem}
\label{hull-th}
Let $(M, g)$ be a complete noncompact Riemannian manifold admitting, for any bounded $\Omega \subset M$ with finite perimeter, a bounded, maximal volume solution $\hat{E}$ to the least area problem with obstacle $\Omega$~\eqref{obstacle-pb-sup}. Then the strictly outward minimising hull of $\Omega$ coincides with $\hat{E}$ up to a negligible set. 
\end{theorem}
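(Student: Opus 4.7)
The plan is to establish both inclusions $\hat{E} \subseteq \Omega^*$ and $\Omega^* \subseteq \hat{E}$ up to Lebesgue null sets. First I would invoke Remark~\ref{var-rem} to observe that the maximal volume solution $\hat{E}$ is automatically strictly outward minimising, so $\hat{E} \in \mathrm{SOMBE}(\Omega)$. By Theorem~\ref{existence}, the hypothesis on $(M,g)$ also guarantees that $\mathrm{SOMBE}(\Omega)$ is nonempty for every admissible obstacle, so $\Omega^*$ is well defined via Definition~\ref{smh-def}. The volume minimality in the definition of $\Omega^*$ then gives $|\Omega^*| \leq |\hat{E}|$ immediately.

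The heart of the argument is to obtain the reverse inclusion by exploiting the two strict outward minimising properties simultaneously. I would apply the classical perimeter subadditivity
\begin{equation}
P(\Omega^* \cap \hat{E}) + P(\Omega^* \cup \hat{E}) \, \leq \, P(\Omega^*) + P(\hat{E})
\end{equation}
(as in \cite[Lemma 12.22]{maggi}). Since $\Omega \subseteq \Omega^* \cap \hat{E}$ and this set is bounded with finite perimeter, it is an admissible competitor in~\eqref{obstacle-pb}, so $P(\Omega^* \cap \hat{E}) \geq m = P(\hat{E})$. Plugging this back into the subadditivity inequality yields $P(\Omega^* \cup \hat{E}) \leq P(\Omega^*)$. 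Because $\Omega^*$ is outward minimising and $\Omega^* \cup \hat{E} \supseteq \Omega^*$, the opposite inequality also holds, and we get the equality $P(\Omega^* \cup \hat{E}) = P(\Omega^*)$.

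At this point, the \emph{strictness} of the outward minimising property of $\Omega^*$, applied to the envelope $\Omega^* \cup \hat{E}$, forces $|(\Omega^* \cup \hat{E}) \setminus \Omega^*| = 0$, that is, $|\hat{E} \setminus \Omega^*| = 0$. Combined with the volume inequality $|\Omega^*| \leq |\hat{E}|$ established in the first step, this gives $|\Omega^* \Delta \hat{E}| = 0$, completing the proof; in particular $P(\Omega^*) = P(\hat{E}) = m$, so $\Omega^*$ itself is the (essentially unique) maximal volume solution to the least area problem with obstacle $\Omega$.

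The main obstacle is probably a presentational one rather than a mathematical one: one must keep careful track of which properties (outward minimising versus \emph{strictly} outward minimising) are being used on which of the two sets, because the argument uses strict outward minimality of $\Omega^*$ at a key juncture while only the weaker one-sided perimeter comparison for $\hat{E}$ is exploited. One subtle point is that Definition~\ref{smh-def} defines $\Omega^*$ as the measure theoretic interior $\mathrm{Int}(E)$, but by~\eqref{representative} this coincides with $E$ up to a null set, so the identifications above are consistent and the conclusion "up to a negligible set" is unaffected.
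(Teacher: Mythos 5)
The proposal has a genuine gap: you invoke, at the pivotal step, that ``$\Omega^*$ is outward minimising'' to get $P(\Omega^*) \leq P(\Omega^* \cup \hat E)$, and then use ``the \emph{strictness} of the outward minimising property of $\Omega^*$'' to conclude $|\hat E \setminus \Omega^*| = 0$. But neither of these facts is available to you at this point. By Definition~\ref{smh-def}, $\Omega^*$ is the measure-theoretic interior of a set $E = \cap_{j} E_j$ obtained as a countable intersection of a volume-minimising sequence of sets in $\mathrm{SOMBE}(\Omega)$; the quantity $|E|$ attains $\inf_{F \in \mathrm{SOMBE}(\Omega)}|F|$, but there is no a priori reason why $E$ itself should be a member of $\mathrm{SOMBE}(\Omega)$, i.e.\ strictly outward minimising. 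Proving exactly this is the main content of the theorem, and it is what the paper's Steps 1 and 2 are devoted to: Step~1 shows that \emph{finite} intersections of strictly outward minimising sets are strictly outward minimising, via a two-fold application of perimeter submodularity, and Step~2 upgrades this to \emph{countable} intersections by a limiting argument that crucially invokes the hypothesis of the theorem (the existence of a maximal volume solution $F'$ to the least area problem with obstacle $F$). Without this preliminary work your submodularity manipulation is sound but stalls before the conclusion: you obtain $P(\Omega^* \cup \hat E) \leq P(\Omega^*)$, but you cannot reverse the inequality, and a fortiori you cannot invoke strictness to squeeze out the measure-zero conclusion.

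Once the strictly outward minimising character of $\Omega^*$ is established, your Step~3 is a valid and in fact slightly different route from the paper's: you work with the \emph{union} $\Omega^* \cup \hat E$ and apply strictness of $\Omega^*$ there, whereas the paper works with the \emph{intersection} $\Omega^* \cap \hat E$ (strictly outward minimising by Step~1) and applies strictness to that set instead. Both close the argument in essentially the same number of lines, so the variation is cosmetic; the essential missing ingredient in your write-up is the proof that $\Omega^*$ is strictly outward minimising, which is neither short nor obvious.
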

\begin{proof}
We split the proof in three steps. 
\smallskip

\emph{Step 1.} We prove that finite intersections of strictly outward minimising sets are strictly outward minimising. It clearly suffices to prove that the intersection of two strictly outward minimising sets $E_1$ and $E_2$ is outward minimising. We do it by first showing that it is outward minimising. This preliminary passage is carried out in the proof of \cite[Proposition 1.3]{bassanezi-tamanini}, but we include the argument, since it will be important also in the sequel. Let $F \subset M$ such that $E_1 \cap E_2 \subset F$, and define $L = F\setminus (E_1 \cap E_2)$. 
Then, applying the already recalled well known property holding for sets with finite perimeter
\[
P(F \cap G) + P(F \cup G) \leq P(F) + P(G)
\]
to the couples of sets $F=(E_1 \cap E_2) \cup L$ and $G = E_1$, $F=(E_1 \cap E_2) \cup (L \cap E_1)$ and $G=E_2$, give respectively
\begin{equation}
\label{tama1}
P\Big((E_1 \cap E_2) \cup (L \cap E_1)\Big) + P(E_1 \cup L) \leq P\Big((E_1 \cap E_2) \cup L\Big)  + P(E_1)
\end{equation}
and
\begin{equation}
\label{tama2}
P(E_1 \cap E_2)  + P\Big((E_1 \cap L) \cup E_2\Big) \leq P\Big((E_1 \cap E_2) \cup (L \cap E_1)\Big) + P(E_2).
\end{equation}
Combining \eqref{tama2} with \eqref{tama1} we obtain at once the following chain of inequalities
\begin{equation}
\label{chain-tama}
\begin{split}
P(E_1 \cap E_2) &\leq P\Big((E_1 \cap E_2) \cup (L \cap E_1)\Big) + P(E_2) - P\Big((E_1 \cap L) \cup E_2\Big) \\ 
&\leq P\Big((E_1 \cap E_2) \cup (L \cap E_1)\Big) \\
&\leq P\Big((E_1 \cap E_2) \cup L\Big) + P(E_1) - P(E_1 \cup L) \\
&\leq  P\Big((E_1 \cap E_2) \cup L\Big) =P(F)
\end{split}
\end{equation}
where the second and fourth inequality are due to the property of $E_1$ and $E_2$ to be outward minimising.
Assume now that that $P(E_1 \cap E_2) = P(F)$ for some $F \supseteq E_1 \cap E_2$.  Then, the second and the last inequality in \eqref{chain-tama} become equalities, yielding respectively
\begin{align}
P(E_2) &= P\Big((E_1 \cap L) \cup E_2\Big) \label{eq-tama1} \\
P(E_1) &= P(E_1 \cup L) \label{eq-tama2}.
\end{align}
Since $E_1$ is strictly outward minimising, we can deduce from \eqref{eq-tama2} that 
\begin{equation}
\label{cons2}
\abs{(E_1 \cup L) \setminus E_1} = \abs{L\setminus E_1} = 0.
\end{equation}
On the other hand, by \eqref{eq-tama1} and by $E_2$ being strictly outward minimising we obtain
\begin{equation}
\label{boh}
\left\vert{\Big(E_1 \cap L) \cup E_2\Big) \setminus E_2}\right\vert=0.
\end{equation}
By the definition of $L$ it is easily seen  that the sets $(E_1 \cap L) \cup E_2$ and $E_2$ are disjoint, and then \eqref{boh} actually reads
\begin{equation}
\label{cons1}
\abs{E_1 \cap L} = 0.
\end{equation}
Combining \eqref{cons1} with \eqref{cons2} we obtain
\[
\abs{F \setminus (E_1 \cap E_2)} = \abs{L} = \abs{L \setminus E_1} + \abs{L \cap E_1} = 0,
\]
that is, $E_1 \cap E_2$ is strictly outward minimising, as claimed.
\smallskip

\emph{Step 2.} 
We now prove that countable intersections of strictly outward minimising sets are strictly outward minimising. By the construction of $\Omega^*$ discussed above, that is viable $\hat{E}$ actually provides a strictly outward minimising set containing $\Omega$, so that $\mathrm{SOME}(\Omega)$ is not empty, this will imply that $\Omega^*$ is strictly outward minimising too.
Let $\{E_i\}_{i \in \N}$ be a sequence of strictly outward minimising sets.
We want to show that $E = \bigcap_{i=1}^{\infty}  E_i$ is strictly outward minimising.
First, \cite[Proposition 1.3]{bassanezi-tamanini} ensures that $E$ is outward minimising.
Assume then that $P({E}) = P(F)$ for some $F \subset M$ containing $E$. Clearly, $F$ is  outward minimising. 
Let $F'$ be a maximal volume solution to the least area problem with obstacle $F$.
Then $F'$ is strictly outward minimising (compare with Remark \ref{var-rem}) and $P(F') = P(F)$, being $F$ outward minimising. 
Let $A_j = \bigcap_{k=1}^{j} E_k$. Since $F'$ is strictly outward minimising, by Step 1 $A_j \cap F'$ is strictly outward minimising.   In particular, since  $A_{j + 1}\cap F' \subset  A_j\cap F' \subset F'$, we have $P(A_{j+1}\cap F') \leq P(A_j \cap F') \leq P(F')$.  Then, by the lower semicontinuity of the perimeter, we have
\begin{equation}
\label{chain-strict}
P(F')=P(E \cap F') \leq \liminf_{j \to \infty} P(A_j\cap F') \leq P(A_k \cap F') \leq P(F') 
\end{equation}
for any $k \in \N$, where the last inequality is again due to the fact that $A_k$ is outward minimising. In particular $P(A_k\cap F') = P(F')$ for any $k$. Since $A_k\cap F'$ is in fact strictly outward minimising, we deduce that $\abs{F' \setminus (A_k\cap F')} = \abs{F' \setminus A_k}  = 0$ for any $k$. Since we have $\abs{{A_k} \setminus E} \to 0$ as $k \to \infty$, we conclude that $\abs{F' \setminus E} = \lim_{k \to \infty} \abs{F' \setminus A_k} = 0$. Since $F \subseteq F'$ this trivially implies $\abs{F \setminus E} = 0$.
We have shown that $E$, and thus $\Omega^*$, is strictly outward minimising, as claimed. 
\smallskip

\emph{Step 3.} We show that $\Omega^*$  differs from $\hat{E}$ by null sets, in particular revealing itself as a maximal volume solution to the least area problem with obstacle $\Omega$. thus concluding the proof.
On the one hand, observe that trivially $\abs{\Omega^* \cap \hat{E}} \leq \abs{\Omega^*}$. However, if the strict inequality sign were in force, then this would contradict \eqref{smh}. We deduce then that $\abs{\Omega^* \setminus \hat{E}} = 0$.
On the other hand, since $\Omega^*$ is strictly outward minimising by Step 2, and $\hat{E}$ is strictly outward minimising too, we have by Step 1 $P(\Omega^* \cap \hat{E}) \leq P(\hat{E})$. However, since $\hat{E}$ solves the least area problem with obstacle $\Omega$, we actually have $P(\Omega^* \cap \hat{E}) = P(\hat{E})$. But then, since $\Omega^* \cap \hat{E}$ is strictly outward minimising, we deduce that $\abs{\hat{E} \setminus \Omega^*} = 0$, completing the proof.
\end{proof}
Let us summarise Theorem \ref{existence} and Theorem \ref{hull-th} in the following general statement. We recall that, since the above result in particular  ensures that $\Omega^*$ is strictly outward minimising, it is also open by Proposition \ref{openness-prop}.
\begin{theorem}
\label{main-hull}
Let $(M, g)$ be a complete noncompact Riemannian manifold. Then, the following statements are equivalent:
\begin{itemize}
\item[$(i)$] $(M, g)$ admits an exhausting sequence of bounded strictly outward minimising sets; 
\item[$(ii)$] For any bounded $\Omega \subset M$ with finite perimeter, there exists a bounded maximal volume solution $\Omega$ to the least area problem with obstacle $\Omega$;
\item[$(iii)$] For any bounded set with finite perimeter $\Omega \subset M$, the strictly outward minimising hull $\Omega^*$ is the unique bounded maximal volume solution to the least area problem with obstacle $\Omega$, up to null modifications. Moreover, $\Omega^*$ is an open set.
\end{itemize}
\end{theorem}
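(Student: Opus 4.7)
The plan is to deduce the cyclic equivalence by assembling the two preceding theorems, together with the openness result from Proposition~\ref{openness-prop}, with essentially no new analytical work required.

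First I would observe that the equivalence $(i) \Leftrightarrow (ii)$ is a verbatim restatement of Theorem~\ref{existence}, so nothing further is needed: the forward implication is obtained by applying the existence theorem to a minimising sequence bounded by a strictly outward minimising set taken from the exhaustion, while the reverse implication is obtained by applying $(ii)$ to the obstacles $B(O,R_j)$ for $R_j \to \infty$ and using that maximal volume solutions are themselves strictly outward minimising (see Remark~\ref{var-rem}).

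Next I would prove $(ii) \Rightarrow (iii)$ by invoking Theorem~\ref{hull-th}. Under assumption $(ii)$, for each bounded $\Omega$ with finite perimeter the family $\mathrm{SOMBE}(\Omega)$ contains the maximal volume solution $\hat E$ and hence is non-empty; the construction via a minimising sequence of volumes, as outlined right before Definition~\ref{smh-def}, then yields a set $E$ realising~\eqref{smh}, so $\Omega^*=\mathrm{Int}(E)$ is well defined up to null modifications. Theorem~\ref{hull-th} applied to $\Omega$ identifies $\Omega^*$ with $\hat E$ up to negligible sets, which gives simultaneously the maximal volume property and the uniqueness among bounded maximal volume solutions. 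To establish the last sentence of $(iii)$, I would recall that $\Omega^*$ was shown in Step~2 of the proof of Theorem~\ref{hull-th} to be strictly outward minimising, hence in particular outward minimising; Proposition~\ref{openness-prop} then guarantees that $\mathrm{Int}(\Omega^*)=\Omega^*$, so $\Omega^*$ is an open set.

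Finally, $(iii) \Rightarrow (ii)$ is immediate, since $(iii)$ explicitly exhibits $\Omega^*$ as a bounded maximal volume solution to the least area problem with obstacle $\Omega$ for every admissible $\Omega$. There is essentially no main obstacle: the only conceptual point to check is that the openness of $\Omega^*$ in $(iii)$ is legitimately derived from Proposition~\ref{openness-prop}, which requires knowing that $\Omega^*$ is outward minimising — but this is precisely the content of Step~2 in the proof of Theorem~\ref{hull-th} on which we are relying.
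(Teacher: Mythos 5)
Your proposal matches the paper's own treatment: the paper explicitly presents Theorem~\ref{main-hull} as a summary of Theorem~\ref{existence} (giving $(i)\Leftrightarrow(ii)$) and Theorem~\ref{hull-th} (identifying $\Omega^*$ with the maximal volume solution), with openness supplied by Proposition~\ref{openness-prop} once $\Omega^*$ is known to be strictly outward minimising from Step~2 of Theorem~\ref{hull-th}'s proof. Your assembly of the pieces, including the observation that $\mathrm{SOMBE}(\Omega)$ is non-empty because maximal volume solutions are strictly outward minimising, is exactly the intended argument.
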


\begin{remark}
\label{test-out}
In force of the above result one gets a simple criterion for deciding whether a bounded set of finite perimeter $\Omega$ is outward minimising or not. Indeed, outward minimising sets are characterised as those satisfying
\begin{equation}
\label{test1}
P(\Omega) =  P(\Omega^*).
\end{equation}
Observe that, by Theorem \ref{main-hull}, we always have $P(\Omega^*) \leq P(\Omega)$. Checking condition \eqref{test1} then amounts to check that $P(\Omega) \leq P(\Omega^*)$.
\end{remark}

\subsection{Regularity of solutions to the least area problem with obstacle and applications}
We recall a fundamental regularity result for a  solution $E$ of the least area problem with obstacle $\Om$, under additional  assumptions on the regularity of $\partial \Omega$. This is the content of \cite[Section 3]{tamanini-regularity} and of \cite{sternberg-williams}. According to the comprehensive \cite[Theorem 1.3]{Hui_Ilm}, this regularity result holds true also in a general Riemannian setting. An inspection of the above proofs shows that they actually consider the representative $\mathrm{Int}(E)$.

\begin{theorem}[Regularity of the strictly outward minimising hull]
\label{regularity-hulls}
Let $(M, g)$ be a complete noncompact Riemannian manifold admitting an exhausting sequence of bounded strictly outward minimising sets. Let $\Omega \subset M$ be a bounded set with $\mathscr{C}^{1, \alpha}$ boundary. Then
\begin{itemize}
\item[$(i)$] $\partial \mathrm{Int}(E)$ is a $\mathscr{C}^{1, \alpha}$-hypersurface in a neighbourhood of $\partial \mathrm{Int}(E) \cap \partial \Omega$. If $\partial \Omega$ is $\mathscr{C}^2$, then $\partial \mathrm{Int}(E)$ is a $\mathscr{C}^{1, 1}$-hypersurface in a neighbourhood of $\partial \mathrm{Int}(E) \cap \partial \Omega$;
\item[$(ii)$] $\partial \mathrm{Int}(E)$ is locally area minimising around any point in  $\partial \mathrm{Int}(E) \setminus \partial \Omega$. In particular there exists a (possibly empty) singular set $\mathrm{Sing} \subset \partial \mathrm{Int}(E) \setminus \partial\Omega$ with Hausdorff dimension at most $n- 8$ such that $\partial\mathrm{Int}(E) \setminus \partial \Omega$ is a smooth hypersurface in a neighbourhood of any point in  $(\partial \mathrm{Int}(E) \setminus \partial \Omega) \setminus  \mathrm{Sing}$. If $n \leq 7$, then $\mathrm{Sing}$ is empty.
\end{itemize}
\end{theorem}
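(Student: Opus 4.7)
The statement is essentially a local regularity result, so the plan is to reduce it to the corresponding Euclidean theory via local charts and then invoke the classical regularity theorems for obstacle problems (Tamanini, Sternberg--Williams) and area-minimising boundaries (De Giorgi--Federer--Fleming with Simons' cone theorem and Federer's dimension reduction). By Theorem~\ref{main-hull}, $\mathrm{Int}(E) = \Omega^*$ (up to nullsets), and it is a maximal-volume solution to the least area problem with obstacle $\Omega$; in particular it minimises the perimeter among all bounded sets of finite perimeter containing $\Omega$. Since every point of $\partial \mathrm{Int}(E)$ has a geodesic neighbourhood on which the metric $g$ is a smooth perturbation of the Euclidean metric in normal coordinates, all the geometric regularity estimates transfer to this setting with only lower-order modifications, as already explained in \cite[Theorem 1.3]{Hui_Ilm}.

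For part $(i)$, fix a point $x \in \partial\mathrm{Int}(E) \cap \partial\Omega$ and a small normal coordinate chart around $x$. In this chart, the hull solves a one-sided obstacle problem for the perimeter functional: minimise $P(\,\cdot\,)$ among sets containing $\Omega$. The obstacle is active near $x$, and the standard comparison argument gives that the mean curvature of $\partial \mathrm{Int}(E)$ at contact points is controlled by the positive part of the mean curvature of $\partial \Omega$. This, together with the classical variational obstacle regularity (see \cite{tamanini-regularity,sternberg-williams} in the Euclidean case), yields the inherited $\mathscr{C}^{1,\alpha}$ regularity of $\partial \mathrm{Int}(E)$ near the contact set, and the capped $\mathscr{C}^{1,1}$ regularity when $\partial\Omega$ is $\mathscr{C}^2$ (the well-known optimal regularity for obstacle problems).

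For part $(ii)$, at a point $y \in \partial\mathrm{Int}(E) \setminus \partial\Omega$ one has a whole neighbourhood disjoint from $\Omega$, so that compactly supported perturbations of $\mathrm{Int}(E)$ in both inward and outward directions remain admissible competitors. The outward minimising property of $\mathrm{Int}(E)$ (from Theorem~\ref{main-hull}) then upgrades to full local perimeter minimality in this neighbourhood. Applying the De Giorgi--Federer--Fleming regularity theorem for perimeter minimisers in Riemannian ambients (again compare~\cite[Theorem 1.3]{Hui_Ilm}), together with Simons' cone theorem and Federer's dimension reduction, gives that $\partial\mathrm{Int}(E) \setminus \partial\Omega$ is a smooth hypersurface outside a closed singular set of Hausdorff dimension at most $n-8$, empty when $n \leq 7$.

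The only nontrivial point is the transition from the Euclidean to the Riemannian setting in both regularity theories; this is a standard matter since all the arguments involved — monotonicity formulae, blow-up analysis, compactness and tangent cone classification — are local and stable under smooth changes of metric, and is carried out in detail in \cite[Theorem 1.3]{Hui_Ilm}. The only genuinely global input is the existence of the hull itself, guaranteed by Theorem~\ref{main-hull} under the standing assumption of an exhaustion by bounded strictly outward minimising sets.
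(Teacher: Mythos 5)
Your proposal takes essentially the same route as the paper: reduce to the Euclidean obstacle-problem regularity of Tamanini and Sternberg--Williams near the contact set, use interior minimality plus the De~Giorgi--Federer--Fleming/Simons/Federer theory away from the obstacle, and appeal to~\cite[Theorem~1.3]{Hui_Ilm} for the transfer to the Riemannian setting. The paper itself treats this theorem as a citation of those same references (together with the remark that the cited proofs work with the representative $\mathrm{Int}(E)$), so your elaboration is a faithful expansion of the intended argument rather than a new one.
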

The $\mathscr{C}^{1, 1}$-regularity in a neighbourhood of the obstacle is essentially optimal, as observed in \cite{sternberg-williams} building on an example in \cite{sternberg-williams_existence}.
Theorem \ref{regularity-hulls} allows us to strengthen the uniqueness result for maximal volume solutions to the least area problem with obstacle contained in Theorem \ref{main-hull} in the case of a $\mathscr{C}^{1, \alpha}$-boundary. More precisely

\begin{proposition}[Enhanced uniqueness]
\label{uniqueness-int}
Let $(M, g)$ be a complete noncompact Riemannian manifold satisfying one of the equivalent conditions given in Theorem \ref{main-hull}. Let $\Omega \subset M$ be a bounded set with $\mathscr{C}^{1, \alpha}$-boundary. Then, if $\hat{E}$ is a maximal volume solution the least area problem with obstacle $\Omega$, we have $\Omega^* = \mathrm{Int}(\hat{E})$.
\end{proposition}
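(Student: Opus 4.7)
The plan is to reduce the statement to Theorem~\ref{hull-th} combined with the elementary observation that the measure-theoretic interior is invariant under zero-measure modifications of the underlying set. The $\mathscr{C}^{1,\alpha}$-regularity of $\partial \Omega$ will not enter the identification itself; rather, it will make the resulting geometric picture consistent with the regularity results recalled in Theorem~\ref{regularity-hulls}, providing a canonical, regular representative of the hull.

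Concretely, let $\hat E$ be any maximal volume solution to the least area problem with obstacle $\Omega$. By Theorem~\ref{hull-th} applied to $\Omega$, the set $\hat E$ differs from any minimiser $E$ of~\eqref{smh} by a null set, i.e.\ $\abs{\hat E \Delta E}=0$. The density ratios $\abs{A \cap B(x,r)}/\abs{B(x,r)}$ appearing in~\eqref{interior} depend only on the Lebesgue equivalence class of $A$, so the measure-theoretic interior satisfies $\mathrm{Int}(\hat E) = \mathrm{Int}(E)$ pointwise as subsets of $M$. By Definition~\ref{smh-def}, the right-hand side is precisely $\Omega^*$, whence $\mathrm{Int}(\hat E) = \Omega^*$. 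Note that the same argument applied to $\Omega^*$ itself gives $\mathrm{Int}(\Omega^*) = \Omega^*$, so the identification is genuinely at the level of specific sets and not merely of equivalence classes.

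The role of the $\mathscr{C}^{1,\alpha}$-regularity hypothesis lies in ensuring, via Theorem~\ref{regularity-hulls}, that this representative $\Omega^* = \mathrm{Int}(\hat E)$ is an open set whose topological boundary is a $\mathscr{C}^{1,\alpha}$-hypersurface in a neighbourhood of $\partial \Omega$ and is locally area-minimising elsewhere, up to a singular set of Hausdorff codimension at least $7$. Combined with Proposition~\ref{openness-prop}, this genuinely enhances the ``up to null modifications'' uniqueness of Theorem~\ref{main-hull}(iii), producing a geometrically distinguished realisation of the hull. No substantial obstacle is anticipated in the execution; the only delicate point is to keep careful track of which representative of each equivalence class of sets is being used at every step of the chain of identifications.
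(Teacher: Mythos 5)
Your argument is correct, and it is genuinely simpler than the one in the paper. The paper's proof invokes Theorem~\ref{regularity-hulls} to ensure that both $\partial \Omega^*$ and $\partial\,\mathrm{Int}(\hat E)$ have null volume measure, combines this with openness (Proposition~\ref{openness-prop}), and then argues via the topological set $\Omega^* \setminus \overline{\mathrm{Int}(\hat E)}$ being open and null, hence empty, plus the symmetric inclusion. You bypass all of this: once Theorem~\ref{hull-th} gives $\abs{\hat E \,\Delta\, E}=0$ for a minimiser $E$ of~\eqref{smh}, the elementary fact that the density ratio in~\eqref{interior} depends only on the Lebesgue class of the underlying set yields $\mathrm{Int}(\hat E)=\mathrm{Int}(E)=\Omega^*$ immediately, with the idempotence of $\mathrm{Int}$ taking care of the ``specific set vs.\ equivalence class'' concern. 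A byproduct of your route is that the $\mathscr{C}^{1,\alpha}$-regularity hypothesis, which the paper's proof uses essentially (to obtain the null-measure-boundary property through Theorem~\ref{regularity-hulls}), plays no role in the identification and could in principle be dropped from the statement of this particular proposition; what it buys in the paper's treatment is the regular geometric description of the resulting representative. One small imprecision: you attribute the openness of $\Omega^*=\mathrm{Int}(\hat E)$ to the regularity hypothesis, but openness already follows from Proposition~\ref{openness-prop} since maximal volume solutions are strictly (hence plain) outward minimising; the $\mathscr{C}^{1,\alpha}$ assumption enters only for the finer boundary structure in Theorem~\ref{regularity-hulls}. This slip does not affect the validity of the proof.
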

\begin{proof}
By Theorem \ref{regularity-hulls}, both $\Omega^*$ and $\mathrm{Int}(\hat{E})$ have boundaries with null volume measure. Moreover, by Proposition \ref{openness-prop}, they are open sets. Thus, since $\Omega^*$ coincides with $\hat{E}$ modulo null sets we have $\abs{\Omega^* \setminus \mathrm{Int}(\hat{E})} = \abs{\Omega^* \setminus \overline{\mathrm{Int}({\hat{E}})}} = 0$. Since $\Omega^* \setminus \overline{\mathrm{Int}({\hat{E}})}$ is an open set, this can only occur if $\Omega^* \subseteq {\hat{E}}$. Arguing the other way round we conclude.
\end{proof}

We find convenient to recall here the notion of weak mean curvature and weak mean-convexity, that, due to the above regularity theorem, becomes very natural for least area sets in presence of a smooth obstacle. 
\begin{remark}[Weak mean curvature and weak mean-convexity]
\label{weak-mean}
We point out that, by a very standard variational argument, the weak mean curvature  of $\partial \Omega^*$ is nonnegative. A notion of variational weak mean curvature is adopted in \cite[Section 1]{Hui_Ilm}, that in particular can be used in our context. However, since in this paper (precisely in Section \ref{sec:isoperimetric}) we are going to deal with this notion only in reference to a $\mathscr{C}^{1, 1}$ hypersurface, we just call weak mean curvature the one naturally defined almost everywhere in this situation, and that belongs to $L^{\infty}$. Similarly, we say that a $\mathscr{C}^{1, 1}$-hypersurface is weakly mean-convex if its weak mean curvature is nonnegative almost everywhere. With this terminology solutions to the least area problem in presence of a smooth obstacle are weakly mean-convex.
\end{remark}

We finally state the following nice \emph{one sided approximation} result due to Schmidt \cite{schmidt}, that we are going to use in the $p$-capacitary approximation of $\abs{\partial \Omega^*}$. It asserts that a bounded set with finite perimeter admits a one sided approximation in perimeter by bounded sets with smooth boundary if $P(\Omega) = \abs{\partial \Omega}$. This is clearly the case for $\Omega^*$,  if $\partial \Omega$ is assumed to be $\mathscr{C}^{1, \alpha}$, due to Theorem \ref{regularity-hulls}. The arguments being purely local, Schmidt's result applies straightforwardly in Riemannian manifolds. Observe also that although in \cite{schmidt} just interior approximation is worked out, an analogous exterior approximation obviously follows as well.
\begin{theorem}[Exterior approximation of $\Omega^*$]
\label{schmidt}
Let $(M, g)$ be a complete noncompact Riemannian manifold admitting an exhausting sequence of bounded strictly outward minimising sets. Let $\Omega\subset M$ be an open bounded set with $\mathscr{C}^{1, \alpha}$-boundary, and $\Omega^*$ be its strictly outward minimising hull. Then, there exists a 
sequence of bounded sets $\{\Omega_k\}_{k \in \N}$ with smooth boundary such that 
\begin{equation}
\label{apprf}
 \Omega^* \subset \Omega_k, \quad \abs{\partial\Omega_k} \to \abs{\partial\Omega^*}.
 \end{equation}
\end{theorem}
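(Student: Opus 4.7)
The plan is to reduce the statement to a direct application of Schmidt's one-sided approximation theorem from \cite{schmidt}, whose hypothesis is that the De Giorgi perimeter of the given set coincides with the $(n-1)$-Hausdorff measure of its topological boundary. The bulk of the work is thus to verify that
\begin{equation}
P(\Omega^*) \, = \, \mathcal{H}^{n-1}(\partial \Omega^*),
\end{equation}
so that Schmidt's theorem (which is proven in the Euclidean setting but whose construction is entirely local, being based on blow-ups and mollifications in charts) can be applied chart by chart in the Riemannian setting. While \cite{schmidt} states the result as an interior approximation, the construction is symmetric with respect to the set and its complement in a neighbourhood of the boundary, so the exterior version with $\Omega^* \subset \Omega_k$ follows by the same argument (and is anyway remarked upon in the statement).

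To verify the equality $P(\Omega^*) = \mathcal{H}^{n-1}(\partial \Omega^*)$, I would exploit the regularity structure of $\partial \Omega^*$ granted by Theorem \ref{regularity-hulls}. Near $\partial \Omega^* \cap \partial \Omega$ the hypersurface $\partial \Omega^*$ is $\mathscr{C}^{1,\alpha}$, hence locally coincides with the reduced boundary $\partial^* \Omega^*$ and contributes equally to $P$ and to $\mathcal{H}^{n-1}$ of the topological boundary. On the complementary open portion $\partial \Omega^* \setminus \partial \Omega$, the hypersurface is locally area-minimising, hence smooth outside a closed singular set $\mathrm{Sing}$ of Hausdorff dimension at most $n - 8$. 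Since $n - 8 < n - 1$, the singular set carries zero $\mathcal{H}^{n-1}$-measure, and on the smooth part the topological boundary again agrees with $\partial^* \Omega^*$. Combining these observations,
\begin{equation}
\mathcal{H}^{n-1}\!\left(\partial \Omega^* \setminus \partial^* \Omega^*\right) \, = \, 0,
\end{equation}
and therefore $P(\Omega^*) = \mathcal{H}^{n-1}(\partial^* \Omega^*) = \mathcal{H}^{n-1}(\partial \Omega^*)$, as required.

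Once this identification is in place, Schmidt's construction applied at each point of $\partial \Omega^*$ (using locally finite covers by normal coordinate charts, together with a standard partition-of-unity patching to glue the local smooth approximations into a global one) delivers a sequence of bounded open sets $\Omega_k \subset M$ with smooth boundary satisfying $\Omega^* \subset \Omega_k$ and $|\partial \Omega_k| \to |\partial \Omega^*|$. The boundedness of each $\Omega_k$ is ensured by choosing the approximations to lie in an arbitrarily small tubular neighbourhood of $\overline{\Omega^*}$. The main obstacle in executing this plan is really the verification of the perimeter-equals-area condition; once this is secured by Theorem \ref{regularity-hulls} and the dimensional bound on the singular set, the rest of the argument is a careful but standard transplant of Schmidt's local construction into the Riemannian ambient.
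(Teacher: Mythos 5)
Your proposal is correct and follows essentially the same route as the paper: invoke Schmidt's theorem, whose applicability hinges on the identity $P(\Omega^*) = \mathcal{H}^{n-1}(\partial\Omega^*)$, which you justify precisely as the paper does, via the regularity splitting of Theorem \ref{regularity-hulls} into a $\mathscr{C}^{1,\alpha}$ contact region and a locally area-minimising free region with $(n-8)$-dimensional singular set. The paper merely asserts this verification, the locality of Schmidt's construction, and the passage from interior to exterior approximation; your write-up supplies the details of the first of these, which is exactly the intended argument.
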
 

\section{Families of manifolds admitting a bounded strictly outward minimising hull} 
\label{sec:families}
Here, we show that under the assumptions of Theorem \ref{main-limit-1} the notion of strictly outward minimising hull recalled in Definition \ref{smh-def} gives an open bounded maximal volume solution to the least area problem with obstacle. 
\subsection{Euclidean-like isoperimetric inequality and strictly outward minimising hull}
The following result affirms that a maximal volume solution to the least area problem with obstacle exists on any Riemannian manifold where an Euclidean-like Isoperimetric Inequality is available, that is, satisfying \eqref{iso-cond-intro} for any bounded $\Omega \subset M$ with smooth boundary. In particular, by Theorem \ref{existence} any bounded $\Omega \subset M$ with finite perimeter admits a strictly outward minimising hull. 
\begin{proposition}
\label{existence-iso}
Let $(M, g)$ be a complete noncompact Riemannian manifold satisfying admitting a constant $\mathrm{C}_{\rm iso} > 0$ such that
\begin{equation}
\label{iso-cond-sec}
\frac{\abs{\partial \Omega}^{n/(n-1)}}{\abs{\Omega}} \geq \mathrm{C}_{\rm iso}
\end{equation} 
for any bounded set $\Omega$ with smooth boundary. Then, for any bounded $\Omega \subset M$ with finite perimeter there exists a maximal volume solution to the least area problem with obstacle $\Omega$.
\end{proposition}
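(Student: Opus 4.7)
I invoke Theorem~\ref{existence}, which reduces the statement to the existence, for every bounded $\Om$ with finite perimeter, of a bounded maximal volume solution to the least-area problem with obstacle $\Om$. Accordingly, I start from a minimising sequence $\{F_j\}_{j\in\N}$ of bounded envelopes of $\Om$, set $m=\lim_j P(F_j)$, and observe that the isoperimetric hypothesis~\eqref{iso-cond-sec} instantly upgrades the uniform perimeter control into a uniform volume bound $|F_j|\le \mathrm{C}_{\rm iso}^{-1/(n-1)} P(F_j)^{n/(n-1)}\le V_0$.

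The delicate point is that $\{F_j\}$ need not be equi-bounded as a family of subsets of $M$: mass could drift to infinity, exactly as in Examples~\ref{example-cuspidal} and~\ref{example-cylindrical}. To remedy this I combine the coarea identity $\int_{R_0}^{\infty} \Haus^{n-1}(F_j \cap \partial B(O, s))\, ds \le |F_j| \le V_0$ (with $R_0$ chosen so that $\Om \subset B(O,R_0)$), which supplies radii $r_j$ at which $\Haus^{n-1}(F_j \cap \partial B(O, r_j))$ is small, with the isoperimetric inequality applied to the tail $F_j \setminus B(O, R)$, which forbids substantial volume far from $\Om$ without a comparable perimeter cost. Together these ingredients let me replace $\{F_j\}$ by a minimising sequence contained in a uniform ball. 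BV-compactness and the lower semicontinuity of the perimeter then yield a bounded minimiser $E$ with $P(E)=m$ and $\Om\subseteq E$ up to null sets.

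For the maximum-volume refinement I take a maximising sequence $\{E_k\}$ of bounded minimisers with $|E_k|\to V^*:=\sup\{|E|: E \text{ a bounded minimiser}\}$. The standard identity $P(E_1\cup E_2)+P(E_1\cap E_2)\le P(E_1)+P(E_2)$ applied to minimisers enveloping $\Om$ forces $P(E_1\cup E_2)=m$, so the class of bounded minimisers is closed under finite unions; by passing to partial unions I may assume $\{E_k\}$ nested and increasing. Applying the coarea-plus-isoperimetric truncation of the previous paragraph to the limit $\bigcup_k E_k$ then produces a bounded $\hat E$ with $P(\hat E)=m$ and $|\hat E|=V^*$, completing the proof.

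The main obstacle is precisely the equi-boundedness step: this is where the global isoperimetric hypothesis enters in an essential, non-cosmetic way, ruling out the escape-of-mass pathologies of Examples~\ref{example-cuspidal} and~\ref{example-cylindrical}.
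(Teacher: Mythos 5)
Your invocation of Theorem~\ref{existence} at the outset is a red herring: the conclusion of Proposition~\ref{existence-iso} \emph{is} precisely the statement ``for every bounded $\Om$ with finite perimeter there is a bounded maximal volume solution,'' so nothing is being reduced. The substantive content of your proposal is the coarea/isoperimetric truncation, and this is where the gap lies.

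The coarea bound $\int_{R_0}^\infty \Haus^{n-1}(F_j\cap\partial B(O,s))\,ds\le|F_j|\le V_0$ produces, for each $j$, radii at which the slice is small, but there is no reason these radii live in a \emph{fixed} ball. If you insist on truncating inside a fixed annulus $(R,R')$, the best you can say is that some $r_j\in(R,R')$ gives a slice of size $\le V_0/(R'-R)$, so the truncated perimeter is controlled by $P(F_j)+V_0/(R'-R)$ --- a fixed, non-vanishing error. Driving this error to zero forces $R'\to\infty$, and then you are exactly back to a sequence which is not equibounded and whose limit must be shown to be bounded. In other words, the sentence ``together these ingredients let me replace $\{F_j\}$ by a minimising sequence contained in a uniform ball'' is precisely what needs proof, and the tools you name do not, by themselves, yield it. (One can salvage a version of your truncation, but it requires first deriving, from the near-minimality of $F_j$, that $P(F_j,M\setminus B(O,r))\le\Haus^{n-1}(F_j\cap\partial B(O,r))+\e_j$ and then running an ODE argument for $m_j(r)=|F_j\setminus B(O,r)|$ to extract a fixed radius; this is considerably more than the coarea counting you invoke.)

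The paper handles this in a cleaner order. It first passes to the local $L^1$ limit $F$ of the minimising sequence, using only local BV-compactness plus lower semicontinuity --- no a priori boundedness needed --- so that $F$ is an honest minimiser (possibly unbounded). Then, crucially, the minimality of $F$ gives the clean estimate
\begin{equation}
P\big(F,\overline{B(O,r)}^{\,c}\big)\le\big|\partial B(O,r)\cap F\big|
\end{equation}
which, combined with the isoperimetric inequality applied to $F\setminus B(O,r)$, gives the differential inequality $\mathrm{C}_{\rm iso}^{(n-1)/n}\, m(r)^{(n-1)/n}\le -2\,m'(r)$ for $m(r)=|F\setminus B(O,r)|$. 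Integrating this shows $m$ must vanish for large $r$, forcing $F$ to be essentially bounded. The maximal-volume step is then handled by the same argument applied to a volume-maximising sequence of bounded minimisers. Your idea of closing the class of minimisers under unions (via $P(E_1\cup E_2)+P(E_1\cap E_2)\le P(E_1)+P(E_2)$, then using $P\ge m$ for both intersection and union) is a pleasant alternative for the second step, but it still leaves you with the same unproven boundedness of $\bigcup_k E_k$, and hence the same gap.
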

\begin{proof}
The main goal is showing that the Isoperimetric Inequality \eqref{iso-cond-sec} forces a minimising sequence for the least area problem to stay uniformly inside a ball, and then to conclude by compactness and lower semicontinuity as in the proof of Theorem \ref{existence}. Let then $F_j$ be a minimising sequence for the least area problem with obstacle made of bounded sets with finite perimeter. Clearly, we can suppose $P(F_j) \leq P(\Omega)$, and thus the well known \emph{local} compactness-lower semicontinuity properties of the perimeter (namely, the combination of \cite[Corollary 12.17]{maggi} with \cite[Proposition 12.15]{maggi}) yields a set $F$ such that a subsequence (relabeled as usual) of the $F_j$'s locally converges in $L^1$ to $F$
and
\[
P(F) \leq \liminf_{j \to + \infty} P(F_j),
\]
so that $F$ realises the infimum in the least area problem with obstacle $\Omega$. 
We claim that, up to null sets, $F \subset B(O, R)$ for some $R > 0$. Assume then by contradiction $\abs{F \setminus B(O, r_j)} > 0$ for some sequence $r_j \to + \infty$ as $j \to +\infty$. In particular, this implies that $\abs{F \setminus B(O, r)} > 0 $ for any $r$ big enough.
The assumed Isoperimetric Inequality (clearly in force, by approximation, for any set with finite perimeter) 
yields 
\begin{equation}
\label{isoperimetric-argument}
\mathrm{C}_{\mathrm{iso}}^{\frac{n-1}{n}} \abs{F \setminus B(O, r)}^{\frac{n-1}{n}} \, \leq \, P(F \setminus B(O, r)) = P(F , \overline{B(O, r)}^{\,c}) + \abs{\partial B(O, r) \cap F}
\end{equation}
for any $r$ such that $\partial B(O, r)$ has a $(n-1)$-negligible singular set.  In particular, \eqref{isoperimetric-argument} holds for almost any $r > 0$. In the above inequality, as in the following arguments, we are possibly considering a representative for $F$.
Observe that by coarea formula the function $m(r) = \abs{F \setminus B(O, r)}$ is absolutely continuous, with derivative $m'(r) = - \abs{\partial B(O, r) \cap F}$ for almost every $r > 0$. We claim that  
\begin{equation}
\label{claim-m'}
P(F , \overline{B(O, r)}^c) \leq \abs{\partial B(O, r) \cap F},
\end{equation} 
in order to deduce from \eqref{isoperimetric-argument} a differential inequality leading in turn to a contradiction. This is accomplished through a quite classical argument, relying on the minimising property of $F$. Let $L = F \setminus \overline{B(O, r)}^{\,c}$, and observe that, for $r$ large enough, we have $\Omega \subset L$. By the minimising property of $F$, we thus get, for $s < r$
\[
P(F , \overline{B(O, s)}^{\,c}) + P(F, B(O, s)) = P(F) \leq P(L) = P(L , \overline{B(O, s)}^{\,c}) + P(L, B(O, s)).
\]
Since clearly $P(L, B(O, s)) = P(F, B(O, s))$, we deduce
\begin{equation}
\label{ls}
P(F , \overline{B(O, s)}^{\,c}) \leq  P(L , \overline{B(O, s)}^{\,c}).
\end{equation}
On the other hand, observe that, as usual up to representatives
\[
 P(L , \overline{B(O, s)}^{\,c}) = P(F, B(O, r) \setminus \overline{B(O, s)}) + \abs{\partial B(O, r) \cap F}.  
\]
Letting $s \to r^-$,  the first term in the right hand side above vanishes and thus plugging this information into \eqref{ls} leaves us with the claimed \eqref{claim-m'}. Inserting \eqref{claim-m'} into \eqref{isoperimetric-argument} yields, as explained above, the differential inequality
\begin{equation}
\label{differential-isoperimetric}
\mathrm{C}_{\mathrm{iso}}^{\frac{n-1}{n}} m(r)^{\frac{n-1}{n}} \leq - 2 m'(r),
\end{equation}
holding true for almost any $r$ big enough, with $m(r) = \abs{F \setminus B(O, r)}$. Integrating it,
we get
\[
\frac{1}{2}{\mathrm{C}_{\mathrm{iso}}^{\,{(n-1)}/{n}}} (r_2 - r_1) \leq n \left[m (r_1)^{\frac{1}{n}} - m(r_2)^{\frac{1}{n}}\right]
\]
for any $r_2 > r_1$ big enough. Letting $r_2 \to + \infty$ the right hand side converges to $n m(r_1)^{1/n}$, since the other summand vanishes by the property of $F$ to have finite volume, while the left hand side cleary diverges. This contradiction arose from $\abs{F \setminus B(O, r_j)} > 0$ for some diverging sequence of $r_j's$, and thus we proved that there exists a ball $B(O, R)$ containing, up to a null set, the solution $F$ to the least area problem with obstacle $\Omega$. 
\smallskip

It remains to show that that we can find a bounded maximal volume solution to the least area problem. To see this, let $E_j$ be a maximising (for the volume) sequence of bounded solutions to the least area problem with obstacle $\Omega$. Then, by the compactness property used in the first part of this proof, such a sequence converges locally in $L^1$ to a set with finite perimeter $E$, that by lower semicontinuity is still a possibly unbounded solution to the least area problem with obstacle $\Omega$. The same argument as above involving the Isoperimetric Inequality then shows that, up a to null sets, $E$ is contained in some ball, completing the proof.
\end{proof}
For future reference, let us state the following immediate corollary of Proposition \ref{existence-iso} and Theorem \ref{main-hull}.
\begin{corollary}
\label{existence-iso-cor}
Let $(M, g)$ be a complete noncompact Riemannian manifold satisfying the assumption ${(i)}$ in the statement of Theorem \ref{main-limit-1}. Then, for any bounded $\Omega \subset M$ with finite perimeter, the strictly outward minimising hull $\Omega^*$ is an open bounded maximal volume solution to the least area problem with obstacle $\Omega$.
\end{corollary}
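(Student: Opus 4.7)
The plan is to assemble the corollary directly from Proposition~\ref{existence-iso} and Theorem~\ref{main-hull}, since no new ideas are required beyond recognising that the two results dovetail. First I would check that the form of the isoperimetric inequality in assumption~$(i)$ of Theorem~\ref{main-limit-1}, namely $|\partial\Omega|^{n}/|\Omega|^{n-1}\geq \mathrm{C}_{\rm iso}$, is equivalent (up to raising both sides to the power $1/(n-1)$ and renaming the constant) to the hypothesis $|\partial\Omega|^{n/(n-1)}/|\Omega|\geq \mathrm{C}_{\rm iso}$ used in Proposition~\ref{existence-iso}. This is a purely cosmetic adjustment.

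Once this identification is made, Proposition~\ref{existence-iso} immediately guarantees that for every bounded set $\Omega\subset M$ of finite perimeter there exists a bounded maximal volume solution $\hat E$ to the least area problem with obstacle $\Omega$. In other words, condition~$(ii)$ of Theorem~\ref{main-hull} is satisfied on $(M,g)$. The equivalence $(ii)\Leftrightarrow(iii)$ in Theorem~\ref{main-hull} then yields the conclusion: the strictly outward minimising hull $\Omega^*$, defined via the minimisation problem~\eqref{smh}, coincides up to negligible modifications with the unique bounded maximal volume solution to the least area problem with obstacle $\Omega$, and it is open.

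Since the heavy lifting has already been done — the isoperimetric argument in Proposition~\ref{existence-iso} that prevents mass escape to infinity along a minimising sequence, and the three-step analysis of Theorem~\ref{hull-th} that identifies $\Omega^*$ with $\hat{E}$ modulo null sets — there is no genuine obstacle here. The only point requiring a touch of care is not to conflate the abstract hull $\Omega^*$ with the particular set $\hat E$ produced by the direct method: the two agree up to null sets by Theorem~\ref{hull-th}, and Definition~\ref{smh-def} takes $\Omega^*$ to be the measure theoretic interior of such a set, which is open by Proposition~\ref{openness-prop} since $\Omega^*$ is strictly outward minimising. The openness and boundedness together with the variational characterisation then exhaust the statement.
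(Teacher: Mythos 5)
Your proposal is correct and takes essentially the same route as the paper, which indeed presents this corollary as an immediate consequence of Proposition~\ref{existence-iso} (giving condition~$(ii)$ of Theorem~\ref{main-hull}) together with the implication $(ii)\Rightarrow(iii)$ of Theorem~\ref{main-hull}. Your attention to the cosmetic rewriting of the isoperimetric constant and to the distinction between $\hat E$ and its measure-theoretic interior $\Omega^*$ is exactly the right level of care.
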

\subsection{Weak Inverse Mean Curvature Flow and strictly outward minimising hulls}
\label{sub-imcf}
In this subsection we show that the existence of a \emph{proper} solution to the Weak Inverse Mean Curvature Flow starting at a bounded set with smooth boundary $\Omega$ naturally yields an exhausting sequence of strictly outward minimising sets, and that the strictly outward minimising hull of $\Omega$ is simply given by the interior of the  zero-sublevel set of such a solution. This is substantially the content of \cite[Lemma 1.4]{Hui_Ilm}, but here we want to emphasise and fully detail the relation between the properties of the strictly outward minimising hull and  the existence of the Weak Inverse Mean Curvature Flow.

Let us recall the notion of Inverse Mean Curvature Flow, as well as its weak formulation introduced by Huisken-Ilmanen in \cite{Hui_Ilm}. Let $(M, g)$ be a complete noncompact Riemannian manifold, and let $\Omega \subset M$ be a bounded subset with smooth boundary given by the immersion $F_0: \partial \Omega \to \R^n$. Assume in addition that the mean curvature of $\partial \Omega$ is strictly positive. Then, we say that the hypersurfaces $\{\partial \Omega_t\}_{t \in [0, T)}$, for some $T > 0$ are evolving by IMCF with initial datum $\partial \Omega$ if they are given by immersions $F(t, \cdot): \partial \Omega \to \R^n$ satisfying
\begin{equation}
\label{IMCFp}
\frac{\partial}{\partial t} F(t, x) = \frac{1}{\HH}(t, x) \nu (t, x), \qquad\qquad F(0, x) = F_0(x)
\end{equation}
where $\nu$ is the exterior unit normal to the hypersurface $\partial \Omega_t$ and $\HH$ is its related mean curvature. It is well known that the IMCF of a strictly mean-convex hypersurface enjoys existence in some time interval $[0, T)$, see e.g. the comprehensive \cite[Theorem 3.1]{Huis_Pold}. 

Looking at the evolving hypersurfaces $\partial\Omega_t$ as level sets $\{w = t\}$ of a smooth function, it is easily seen that $w$ must satisfy the equation
\begin{equation}
\label{imcf-eq}
\dive\left(\frac{\D w}{\abs{\D w}}\right) = \abs{\D w}
\end{equation}
in the region foliated by the evolving hypersurfaces. 
In particular, if there exists a smooth solution to \eqref{IMCFp}   made of embedded closed hypersurfaces, then it is well defined the smooth function $w$ with nonvanishing gradient solving \eqref{imcf-eq}. For ${x} \in M \setminus \overline{\Omega}$, $w({x})$ in fact returns the time the Inverse Mean Curvature Flow hits the level set $\{w = w(x)\}$.
\smallskip

The weak formulation of the IMCF actually constitutes a weak formulation of \eqref{imcf-eq}. 
A weak solution to \eqref{imcf-eq} starting from a bounded set with smooth boundary $\Omega$, that we will call with abuse of terminology (since as explained above it is actually the arrival time function) Weak Inverse Mean Curvature Flow starting at $\Omega$, will be a function $w \in \mathrm{Lip}_{\mathrm{loc}}(M)$ satisfying the following conditions.
\begin{enumerate}
\item[$(i)$] For every $v \in \mathrm{Lip}_{\mathrm{loc}}(M)$ with $\{w \neq v\} \Subset M \setminus \overline{\Omega}$ and any compact set $K\subset M \setminus \Omega$ containing $\{w \neq v\}$,
\begin{equation}
\label{wimcf1}
J_w^K(w) \leq J_w^K(v)
\end{equation}
 where 
\begin{equation}
\label{J}
J_w^K(v) = \int_K \abs{\D v} + v \abs{\D w} \dd\mu. 
\end{equation}
\item[$(ii)$] The set $\Omega$ is the $0$-sublevel set of $w$, that is
\begin{equation}
\label{0-sub}
\Omega = \{w < 0\}.
\end{equation}
\end{enumerate}

\begin{remark}[Properness of the Weak IMCF]
\label{properness}
We say that the Weak Inverse Mean Curvature Flow is \emph{proper} if $w$ is a proper function. In the rest of this note, we will always consider Weak Inverse Mean Curvature Flows that are proper.
Observe that if $w(x) \to + \infty$ as $d(O, x) \to + \infty$, then $w$ is proper. The validity of this condition is assumed as definition of properness in \cite{kotschwar}. However, there may exist a proper  Weak IMCF $w$ such that $w \not\rightarrow +\infty$ at infinity (see Example \ref{cigar} below).
\end{remark}

It is definitively convenient to rephrase $(i)$ in the definition of Weak Inverse Mean Curvature Flow in terms of the level sets of the solution.
\begin{enumerate}
\item[$(i-bis)$]
For any $t \geq 0$, the sets $\{w \leq t\}$ satisfies
\begin{equation}
\label{wimcf2}
J^K_w(\{w \leq t\}) \leq J^K_w (F)
\end{equation}
for any $F\subset M$ with locally finite perimeter satisfying $\{w \leq t\} \Delta F \Subset M \setminus \overline{\Omega}$  and any compact $K \subset M\setminus\overline{\Omega}$ containing $\{w \leq t\} \Delta F$, where
\begin{equation}
\label{J-set}
J_w^K (F) = \abs{\partial^* F \cap K} - \int_{F \cap K} \abs{\D w} \dd\mu.
\end{equation}
\end{enumerate}
We refer the reader to \cite[Lemma 1.1 and Lemma 2.2]{Hui_Ilm} and the discussion thereafter for the proof of the equivalence between conditions $(i)$ and $(i-bis)$.
One can then deduce from the fundamental Minimizing Hull Property Lemma 1.4 in \cite{Hui_Ilm} that the sets ${\rm Int}\{w \leq t\}$ are strictly outward minimising. In particular, the condition of having an exhausting sequence of strictly outward minimising sets is fulfilled every time there exists the Weak IMCF, and the analysis of the preceding section allows then to define the strictly outward minimising hull of $\Omega$. It finally turns out that $\Omega^* = {\rm Int}\{w \leq 0\}$. Let us carefully prove this fact.
\begin{proposition}
\label{imcf-hulls}
Let $(M, g)$ be a complete noncompact Riemannian manifold. Assume that for any bounded $\Omega \subset M$ with smooth boundary there exists a proper weak IMCF $w$ starting from $\Omega$, then the strictly outward minimising hull $\Omega^*$ is an open bounded maximal volume solution to the least area problem with obstacle $\Omega$. Moreover, ${{\rm Int}\{w \leq 0\} = \Omega^*}$. 
\end{proposition}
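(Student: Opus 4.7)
The plan is to use the proper weak IMCF to verify the hypothesis of Theorem~\ref{main-hull} and then to identify its output $\Omega^{*}$ with the superlevel set $E_{0}^{+} := \mathrm{Int}\{w \leq 0\}$. By the Minimising Hull Property \cite[Lemma~1.4]{Hui_Ilm}, every $E_{t}^{+} = \mathrm{Int}\{w \leq t\}$ with $t \geq 0$ is strictly outward minimising; by the assumed properness of $w$, each $E_{t}^{+}$ is bounded and $M = \bigcup_{t > 0} E_{t}^{+}$. Theorem~\ref{main-hull} then applies and produces the open bounded maximal volume solution $\Omega^{*}$ to the least area problem with obstacle $\Omega$, unique up to null modifications.

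I next set up a first inclusion. Since $\Omega$ is open and $\Omega = \{w < 0\} \subseteq \{w \leq 0\}$, the set $E_{0}^{+}$ is an open, bounded, strictly outward minimising envelope of $\Omega$, hence it belongs to $\mathrm{SOMBE}(\Omega)$ and $\abs{\Omega^{*}} \leq \abs{E_{0}^{+}}$. The finite intersection property of strictly outward minimising sets established in Step~1 of the proof of Theorem~\ref{hull-th} shows that $\Omega^{*} \cap E_{0}^{+}$ is strictly outward minimising and contains $\Omega$, so it too lies in $\mathrm{SOMBE}(\Omega)$; minimality of $\abs{\Omega^{*}}$ then forces $\abs{\Omega^{*} \cap E_{0}^{+}} = \abs{\Omega^{*}}$, i.e.\ $\abs{\Omega^{*} \setminus E_{0}^{+}} = 0$.

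To match the two sets modulo null, I aim to prove $P(E_{0}^{+}) \leq P(\Omega^{*})$; the reverse inequality is automatic because $\Omega^{*}$ is a minimum perimeter envelope of $\Omega$ and $E_{0}^{+} \supseteq \Omega$. This is where the variational characterisation $(i\text{-bis})$ of the weak IMCF at level $t = 0$ enters, by comparing $\{w \leq 0\}$ with a slight outward fattening of $\Omega^{*}$ containing $\overline{\Omega}$, chosen so that the symmetric difference with $\{w \leq 0\}$ sits compactly in $M \setminus \overline{\Omega}$ as required. Letting the fattening shrink, the nonnegative term $\int \abs{\D w}$ in $J_w^K$ is discarded and lower semicontinuity of the perimeter delivers the desired comparison. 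Combining the two perimeter inequalities with the strictly outward minimising property of $\Omega^{*}$ applied to the envelope $\Omega^{*} \cup E_{0}^{+}$ yields $\abs{E_{0}^{+} \setminus \Omega^{*}} = 0$; openness of both $\Omega^{*}$ and $E_{0}^{+}$ (the first by Proposition~\ref{openness-prop}, the second by construction) finally upgrades the measure identity $\abs{\Omega^{*} \Delta E_{0}^{+}} = 0$ to the set-theoretic equality $\Omega^{*} = E_{0}^{+}$.

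The delicate ingredient is the last perimeter bound: $\overline{\Omega}$ and $\partial \Omega^{*}$ can meet along a nontrivial portion of their boundaries, an unavoidable feature of the least area problem with obstacle, and this prevents a naive choice of $F = \Omega^{*}$ as a competitor in $(i\text{-bis})$. The fattening-and-limiting procedure outlined above, backed up by the $\mathscr{C}^{1,\alpha}$ regularity of $\partial \Omega^{*}$ near $\partial \Omega$ granted by Theorem~\ref{regularity-hulls}, is what I expect will make the variational comparison rigorous.
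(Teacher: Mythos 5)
Your proposal follows essentially the same route as the paper: appeal to the Minimising Hull Property to produce a strictly outward minimising exhaustion, invoke Theorem~\ref{main-hull}, show $\lvert\Omega^*\setminus E_0^+\rvert=0$ via the finite-intersection property, then close by comparing perimeters. You are also right to flag that feeding $F=\Omega^*$ naively into $(i\text{-bis})$ is delicate when $\partial\Omega$ meets $\partial\Omega^*$, a point the paper passes over with ``up to a suitable choice of $K$''; the paper instead plugs in $F=\Omega^*$ directly, observes that $\lvert \D w\rvert$ vanishes a.e.\ on the open set $\mathrm{Int}\{w\le 0\}\setminus\overline{\Omega^*}\subset\{w=0\}$, and uses the null measure of $\partial\Omega^*$ (from Theorem~\ref{regularity-hulls}) to pass to perimeters.

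Two points in your write-up do not close as stated. First, ``lower semicontinuity of the perimeter delivers the desired comparison'' goes the wrong way: if $F_\epsilon\downarrow\Omega^*$, lower semicontinuity only gives $P(\Omega^*)\le\liminf_\epsilon P(F_\epsilon)$, whereas you need the reverse, $\limsup_\epsilon P(F_\epsilon)\le P(\Omega^*)$, to pass from $P(\{w\le0\})\le P(F_\epsilon)$ to $P(\{w\le0\})\le P(\Omega^*)$. An arbitrary outward fattening (e.g.\ a tubular neighbourhood) need not have converging perimeter; you must take the $F_\epsilon$'s from Schmidt's exterior approximation, Theorem~\ref{schmidt}, which is precisely designed to give $\lvert\partial F_\epsilon\rvert\to\lvert\partial\Omega^*\rvert$. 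Second, ``openness of both sets upgrades $\lvert\Omega^*\,\Delta\, E_0^+\rvert=0$ to equality'' is not a valid implication — two open sets can agree a.e.\ without being equal. What actually does the job is that both $\Omega^*$ and $E_0^+=\mathrm{Int}\{w\le0\}$ are, by construction, measure-theoretic interiors and hence coincide with their own sets of density-one points, so applying $\mathrm{Int}(\cdot)$ to both sides of the a.e.\ identity forces equality; this is exactly the mechanism behind the paper's Proposition~\ref{uniqueness-int}, which it invokes to finish. With those two repairs your argument is sound and matches the paper's.
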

\begin{proof}
Let $w$ be the Weak IMCF emanating from the bounded open set with smooth boundary $\Omega$. By \cite[Lemma 1,4, (ii)]{Hui_Ilm}, the set $\mathrm{Int}\{w \leq 0\}$ is bounded and strictly outward minimising. In particular, if $w$ exists for any such $\Omega$, taking an exhausting sequence of bounded open sets with smooth boundary yields an exhausting sequence of bounded strictly outward minimising sets, that allows us to use Theorem \ref{main-hull} to show that $\Omega^*$ defined in Definition \ref{smh-def} is an open bounded maximal volume solution to the least area problem with obstacle $\Omega$. Consequently $\Omega^* \cap \mathrm{Int}\{w \leq 0\}$ is the intersection of two strictly outward minimising sets, and then, as proved in \emph{Step 1} in the proof of Theorem \ref{hull-th}, it is  strictly outward minimising itself. Thus, since $\Omega^*$ minimises the volume among all the strictly outward minimising envelopes of $\Omega$, we conclude that $\abs{\Omega^* \setminus (\mathrm{Int}\{w \leq 0\} \cap \Omega^*)} = 0$. Thus, up to a null modification, we have $\Omega^* \subseteq \mathrm{Int}\{w \leq 0\}$.
By properties $(i-bis)$ and $(ii)$ in the definition of the Weak IMCF recalled above, applied with $F = \Omega^*$ we get, up to a suitable choice of $K$,
\[
P(\{w \leq 0\}) \,\,\,\, - \!\!\int\limits_{{\rm Int}\{w \leq 0\} \setminus {\overline{\Omega^*}}} \abs{\D w} \dd\mu \, \leq \, P(\Omega^*),
\]  
since the boundary of $\Omega^*$ is negligible in light of the regularity statement Theorem \ref{regularity-hulls}. As $\abs{\D w}$ vanishes in the open region ${\rm Int}\{w \leq 0\} \setminus {\overline{\Omega^*}} \subset \{w = t\}$, we deduce that $P(\{w \leq 0\}) \leq P(\Omega^*)$. 
Thus $\mathrm{Int}\{w \leq 0\}$ is a solution to the least area problem with obstacle $\Omega$ and, since $\Omega^*$ is of maximal volume among solutions to the obstacle problem, we necessarily have $\abs{\{w \leq 0 \}} = \abs{\Omega^*}$.
We deduce that $\mathrm{Int}\{w \leq 0\}$ is another maximal volume solution to the least area problem with obstacle $\Omega$, and we conclude by Proposition \ref{uniqueness-int}.
\end{proof}
In \cite{mari-rigoli-setti}, the authors showed that under the assumptions of $(ii)$ in Theorem \ref{main-limit-1}, any bounded set with smooth boundary can be evolved through a proper weak solution to the Inverse Mean Curvature Flow. In particular from this fact and Proposition \ref{imcf-hulls} we get that under these assumptions $\Omega^*$ satisfies the desired properties.
\begin{corollary}
\label{imcf-hulls-cor}
Let $(M, g)$ be a complete noncompact Riemannian manifold satisfying the assumptions of ${(ii)}$ in Theorem \ref{main-limit-1}. Then, for any open bounded $\Omega$ with smooth boundary, the set $\Omega^*$ is an open bounded maximal volume solution to  the least area problem with obstacle $\Omega$.
\end{corollary}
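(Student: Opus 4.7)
The statement is an immediate consequence of combining the existence theory for the weak Inverse Mean Curvature Flow due to Mari-Rigoli-Setti with the geometric characterisation of $\Omega^*$ established in Proposition \ref{imcf-hulls}.

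My plan is the following. First, I would invoke \cite{mari-rigoli-setti}: under the set of assumptions $(ii)$, namely nonnegative Ricci curvature together with the polynomial uniform superlinear volume growth \eqref{minerbe}, every bounded open subset $\Omega \subset M$ with smooth boundary admits a proper weak solution $w \in \mathrm{Lip}_{\mathrm{loc}}(M)$ of the Inverse Mean Curvature Flow starting at $\Omega$, in the sense of the conditions $(i)$--$(ii)$ recalled in Subsection \ref{sub-imcf}.

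Having established the existence of a proper weak IMCF issuing from every such $\Omega$, the second and final step consists simply in applying Proposition \ref{imcf-hulls}, whose hypotheses are then fulfilled verbatim. This directly yields that $\Omega^*$ is an open bounded maximal volume solution to the least area problem with obstacle $\Omega$, and that, moreover, it coincides (up to negligible sets) with $\mathrm{Int}\{w \leq 0\}$.

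There is essentially no genuine obstacle in this derivation, since the deep analytic and geometric work is carried out in \cite{mari-rigoli-setti} (to produce the proper flow under the natural curvature and volume growth hypotheses) and in Proposition \ref{imcf-hulls} (to link the $0$-sublevel set of the arrival time function $w$ with the strictly outward minimising hull). The role of the present corollary is purely that of packaging these two ingredients into a single statement, so no dedicated additional argument is required.
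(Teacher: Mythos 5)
Your proposal is correct and matches the paper's argument exactly: the paper likewise cites \cite{mari-rigoli-setti} for the existence of a proper weak IMCF under assumption $(ii)$ and then invokes Proposition \ref{imcf-hulls} to conclude. No further comment is needed.
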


\subsection{Other aspects of strictly outward minimising sets and IMCF}
Here, we observe that a \emph{smooth} solution to the Inverse Mean Curvature Flow of a bounded open set $\Omega$ contained in a complete Riemannian manifolds never completely leaves $\Omega^*$, if the latter is an open bounded maximal volume solution to the least area problem with obstacle $\Omega$. In particular, this holds under the assumptions of Theorem \ref{main-limit-1}. This phenomenon is substantially a direct corollary of the outward minimising property of the hypersurfaces evolving by IMCF, that was observed in \cite[Smooth Flow Lemma 2.3]{Hui_Ilm}. We add a proof of this fact, because it implies, together with the long time existence theory for strictly starshaped sets with smooth and strictly mean-convex boundary, that the latters are strictly outward minimising. A simple approximation argument involving the Mean Curvature Flow will also yield that any mean-convex strictly starshaped set in $\R^n$ is outward minimising. 

\begin{proposition}[No Escape from $\Omega^*$]
\label{noescape}
Let $(M, g)$ be a complete noncompact Riemannian manifold satisfying one of the equivalent conditions in Theorem \ref{main-hull}. Then, for $\Omega \subset M$ a bounded open set with smooth boundary, let $\{\partial \Omega_t\}_{t \in [0, T]}$ be evolving by IMCF with initial datum $\partial \Omega$. Then, if $\Omega^* \Subset \Omega_t$ for some $t \in (0, T]$, then $\Omega$ is strictly outward minimising and we have $\Omega = \Omega^*$.
\end{proposition}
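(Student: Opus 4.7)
The plan is to combine the Smooth Flow Lemma of Huisken--Ilmanen~\cite[Lemma~2.3]{Hui_Ilm} --- which guarantees that the sets $\Omega_s$ evolved by a smooth IMCF are (strictly) outward minimising for $s$ positive --- with the minimality of $|\Omega^*|$ among bounded strictly outward minimising envelopes of $\Omega$ encoded in Definition~\ref{smh-def}. The role of the hypothesis $\Omega^* \Subset \Omega_t$ is to secure a nontrivial interval of smooth existence throughout which the Smooth Flow Lemma applies.

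First I would match the perimeters: the Smooth Flow Lemma yields that $\Omega = \Omega_0$ is outward minimising, so $P(\Omega) \leq P(\Omega^*)$, while Theorem~\ref{main-hull}~$(iii)$ gives the reverse $P(\Omega^*) \leq P(\Omega)$, since $\Omega^*$ solves the least area problem with obstacle $\Omega$. This forces $P(\Omega) = P(\Omega^*)$.

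The crux of the argument is then to show that $|\Omega^* \setminus \Omega_s| = 0$ for every $s \in (0,t]$. For this I would exploit that the Smooth Flow Lemma makes $\Omega_s$ strictly outward minimising for such $s$; by Step~1 in the proof of Theorem~\ref{hull-th}, the intersection $\Omega^* \cap \Omega_s$ is then strictly outward minimising as well, and it belongs to $\mathrm{SOMBE}(\Omega)$ since $\Omega \subseteq \Omega^*$ and $\Omega = \Omega_0 \subseteq \Omega_s$. The minimality of $|\Omega^*|$ in Definition~\ref{smh-def} then forces $|\Omega^*| \leq |\Omega^* \cap \Omega_s|$, and together with the trivial reverse inclusion this yields the claimed vanishing.

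Finally, letting $s \to 0^+$, the strictly expanding character of smooth IMCF gives $\bigcap_{s>0} \Omega_s = \overline{\Omega}$, so continuity of measure produces $|\Omega^* \setminus \overline{\Omega}| = 0$ and hence $|\Omega^* \setminus \Omega| = 0$, using that $|\partial \Omega| = 0$. Since $\Omega$ and $\Omega^*$ are both open and $\Omega \subseteq \Omega^*$, their measure-theoretic interiors coincide, whence $\Omega = \Omega^*$, which by Remark~\ref{var-rem} is tantamount to $\Omega$ being strictly outward minimising. The most delicate point I expect is the extraction of the strict (rather than merely weak) outward minimising character of $\Omega_s$ for $s > 0$ embedded in the Smooth Flow Lemma, together with the one-sided limit $\Omega_s \searrow \overline{\Omega}$ as $s \to 0^+$; both rest on the standing assumption of smoothness of the flow on $[0,T]$ granted by the hypothesis $\Omega^* \Subset \Omega_t$.
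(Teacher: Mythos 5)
Your invocation of the Huisken--Ilmanen Smooth Flow Lemma is where the argument breaks down, and the failure is in fact a circularity. The Smooth Flow Lemma does not assert unconditionally that the time-slices $\Omega_s$ of a smooth IMCF are outward (let alone strictly outward) minimising. Its hypothesis is precisely that the \emph{initial} region is a strictly minimising hull inside the domain swept out by the flow, and only under that assumption does it conclude that the subsequent slices are minimising hulls. If $\Omega$ is not already outward minimising (e.g., a strictly mean-convex set with a small dent), a smooth IMCF can exist for a short time without any of its slices being outward minimising: the failure is an obstruction the flow does not repair instantly. So the step ``the Smooth Flow Lemma yields that $\Omega=\Omega_0$ is outward minimising'' is exactly the conclusion you are trying to establish, not a fact you are entitled to import, and the subsequent step ``$\Omega_s$ is strictly outward minimising for $s>0$'' inherits the same defect. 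This is not a cosmetic issue: the entire middle of your argument (intersecting $\Omega^*$ with $\Omega_s$ and invoking $\mathrm{SOMBE}(\Omega)$) rests on it.

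The paper's proof avoids the Smooth Flow Lemma altogether. Using the level-set function $w$ with $\{w=t\}=\partial\Omega_t$ (which satisfies $\dive(\D w/|\D w|)=|\D w|$ classically in the foliated region), one integrates $|\D w|$ over $\Omega^*\setminus\overline\Omega$ and applies the divergence theorem for sets of finite perimeter: the inner boundary contributes $-|\partial\Omega|$ exactly (because $\D w/|\D w|$ is the outward unit normal on $\partial\Omega$), while the outer boundary contributes at most $|\partial\Omega^*|$. Since $\Omega^*$ solves the least area problem with obstacle $\Omega$, one has $|\partial\Omega^*|\le|\partial\Omega|$, forcing $\int_{\Omega^*\setminus\overline\Omega}|\D w|\dd\mu\le 0$; nonnegativity of the integrand then gives $|\Omega^*\setminus\overline\Omega|=0$, and openness concludes. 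If you want to salvage your route, you would first need an independent argument that $\Omega$ itself is outward minimising under the hypothesis $\Omega^*\Subset\Omega_t$ --- but that is essentially the divergence-theorem computation, so you would end up reproducing the paper's proof in any case.
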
 
\begin{proof}
Let $w: \Omega_T \to \R$ such that $\{w = t\} = \partial \Omega_t$, so that $w$ classically satisfies the level set equation \eqref{imcf-eq}. Then
\begin{equation}
\label{chain-noescape}
\begin{split}
0 \leq \int\limits_{\Omega^* \setminus \overline{\Omega}} \abs{\D w} \dd\mu &= \int\limits_{\Omega^* \setminus \overline{\Omega}} \dive\left(\frac{\D w}{\abs{\D w}} \right)  \dd\mu \\ 
&=  \int\limits_{\partial^*\Omega^*} \left\langle \frac{\D  w}{\abs{\D w}} \, \bigg\vert \, \nu_{\partial^* \Omega^*} \right\rangle \dd\sigma - \int\limits_{\partial\Omega} \left\langle \frac{\D  w}{\abs{\D w}} \, \bigg\vert \, \frac{\D w}{\abs{\D w}} \right\rangle \dd\sigma  \\
&\leq \abs{\partial \Omega^*} - \abs{\partial \Omega} \leq 0,
\end{split}
\end{equation}
where in the second equality $\nu_{\partial^* \Omega^*}$ is the measure theoretic unit normal to the reduced boundary $\partial^* \Omega^*$ and in the last inequality we used the Divergence Theorem for sets of finite perimeter  coupled with $\abs{\partial \Omega^*} = \abs{\partial^*\Omega^*}$. In particular, \eqref{chain-noescape} implies that $\abs{\Omega^* \setminus \overline{\Omega}} = 0$. By openness, that follows from \eqref{openness-prop}, this implies that $\Omega = \Omega^*$ and in particular it is strictly outward minimising.
\end{proof}
\begin{remark}
The above result originated in an earlier version of \cite{Ago_Fog_Maz_2}, where it was proved through $p$-harmonic approximation. It has been recently re-proved, substantially with the arguments above in \cite[Lemma 1]{harvie-nonstarshaped}.
\end{remark}
The celebrated result of Gerhardt \cite{gerhardt} and Urbas \cite{urbas}, asserting that strictly starshaped sets of $\R^n$ with smooth and strictly mean-convex boundary admit an immortal solution to their Inverse Mean Curvature Flow immediately combines  with Proposition \ref{noescape} to show that these sets are in fact strictly outward minimising. Let us recall that in $\R^n$ a bounded set $\Omega$ with smooth boundary is strictly starshaped with respect to some point $x_0 \in \Omega$ if and only
\[
\left\langle x - x_0 \, \vert \, \nu\right\rangle > 0
\] 
for any $x \in \partial \Omega$, where $\nu$ is the outward unit normal to the boundary of $\Omega$.
\begin{corollary}
\label{starshaped-cor}
Let $\Omega \subset \R^n$ be a bounded strictly starshaped set with smooth strictly mean-convex boundary. Then, $\Omega$ is strictly outward minimising.
\end{corollary}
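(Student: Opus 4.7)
The plan is to combine two ingredients already available: the existence of the strictly outward minimising hull $\Omega^*$ for the obstacle $\Omega$, and the long time existence of the smooth IMCF starting at $\partial \Omega$ provided by Gerhardt--Urbas, and then to invoke the No Escape Proposition \ref{noescape}.

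First I would observe that in $\R^n$ the classical Euclidean Isoperimetric Inequality is in force, so assumption $(i)$ of Theorem \ref{main-limit-1} holds. Corollary \ref{existence-iso-cor} then guarantees that the strictly outward minimising hull $\Omega^*$ of the given bounded set $\Omega$ is a well-defined open bounded maximal volume solution to the least area problem with obstacle $\Omega$. In particular, one of the equivalent conditions in Theorem \ref{main-hull} is fulfilled, so that the hypothesis needed to apply Proposition \ref{noescape} is in place.

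Next I would invoke the Gerhardt--Urbas theorem: since $\Omega$ is strictly starshaped with smooth strictly mean-convex boundary, the IMCF admits a smooth immortal solution $\{\partial\Omega_t\}_{t\in[0,+\infty)}$ with initial datum $\partial \Omega$, and the evolving hypersurfaces asymptotically round off to spheres expanding to infinity. In particular, the expanding domains $\Omega_t$ exhaust $\R^n$: for every compact set $K\subset\R^n$ there exists $t_K>0$ such that $K\Subset \Omega_{t_K}$. Applying this to the bounded set $\overline{\Omega^*}$, one finds some $t_0>0$ with $\Omega^* \Subset \Omega_{t_0}$.

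At this point Proposition \ref{noescape} applies directly and forces $\Omega = \Omega^*$. Since $\Omega^*$ is, by its very definition, strictly outward minimising, the same is true for $\Omega$, which is the desired conclusion. The only potentially delicate point is step three, namely justifying that the smooth IMCF really exhausts $\R^n$ rather than merely existing for all times; but this is a known feature of the Gerhardt--Urbas flow, as the rescaled surfaces converge to round spheres and their enclosed radii diverge, so no serious obstacle is expected here. Everything else is a straightforward chaining of previously established statements.
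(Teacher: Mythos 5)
Your argument is correct and follows exactly the route the paper takes: the result is stated as an immediate consequence of the Gerhardt--Urbas immortal smooth IMCF combined with the No Escape Proposition~\ref{noescape}, once the existence theory for $\Omega^*$ in $\R^n$ is in place via the Euclidean isoperimetric inequality. You merely spell out the steps the paper leaves implicit, in particular that the expanding flow eventually encloses the bounded set $\Omega^*$, and all of those details check out.
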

A simple approximation argument based on the Mean Curvature Flow yields that the strict mean-convexity can be relaxed to mean-convexity, that is, the mean curvature $\HH$ of $\partial \Omega$ is allowed to satisfy $\HH = 0$ on some point of $\partial \Omega$. However, in this case, we are able to show that $\Omega$ is just outward minimising.

\begin{proposition}[Starshaped mean-convex sets are outward minimising]
\label{th-star}
Let $\Omega \subset \R^n$ be a  strictly \emph{starshaped} bounded open set with smooth \emph{mean-convex} boundary. Then $\Omega$ is outward minimising. 
\end{proposition}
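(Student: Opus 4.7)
The plan is to perturb $\partial\Omega$ slightly inward by a short-time Mean Curvature Flow so as to reduce to the strictly mean-convex situation covered by Corollary~\ref{starshaped-cor}, and then to pass to the limit in the perimeter inequality that defines outward minimality. By classical short-time existence, there is a smooth MCF $\{\partial\Omega_t\}_{t\in[0,\tau)}$ with initial datum $\partial\Omega$, for some $\tau>0$. Since $H\geq 0$ on $\partial\Omega$, the instantaneous velocity $-H\nu$ of the flow points into $\Omega$ (or vanishes), and a direct inspection shows that $\Omega_t\subseteq\Omega$ for all $t\in[0,\tau)$.

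Next I would verify that, for every sufficiently small $t\in(0,\tau)$, the set $\Omega_t$ satisfies the hypotheses of Corollary~\ref{starshaped-cor}. Strict starshapedness with respect to $x_0$ is an open condition in the $C^1$-topology, and the smooth convergence $\partial\Omega_t\to\partial\Omega$ as $t\to 0^+$ guarantees that it persists for small $t$. For the strict mean-convexity, I would invoke the standard evolution equation $\partial_t H=\Delta H+\abs{A}^2 H$ along the flow together with the strong parabolic maximum principle: since $H\geq 0$ at $t=0$, and since no closed hypersurface of $\R^n$ can be minimal (so $H$ cannot vanish identically at $t=0$), one obtains $H>0$ everywhere on $\partial\Omega_t$ for every $t\in(0,\tau)$. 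Thus each such $\Omega_t$ is strictly starshaped with smooth strictly mean-convex boundary, and Corollary~\ref{starshaped-cor} applies to give that $\Omega_t$ is strictly outward minimising.

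The conclusion is then obtained by taking a bounded set $F$ with finite perimeter containing $\Omega$ and noting that $\Omega_t\subseteq\Omega\subseteq F$, so the strict outward minimality of $\Omega_t$ yields $P(\Omega_t)\leq P(F)$. Letting $t\to 0^+$, smooth convergence gives $P(\Omega_t)=\abs{\partial\Omega_t}\to\abs{\partial\Omega}=P(\Omega)$, whence $P(\Omega)\leq P(F)$, which is exactly the outward minimality of $\Omega$. The most delicate step is the maximum principle argument, which requires ruling out the degenerate initial condition $H\equiv 0$; this is automatic in $\R^n$ because there are no closed minimal hypersurfaces. The fact that only the non-strict outward minimising property is recovered in the limit is expected, since open portions of $\partial\Omega$ on which $H$ vanishes can in principle produce nontrivial equality configurations in the perimeter comparison.
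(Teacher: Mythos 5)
Your proof is correct and follows essentially the same strategy as the paper: perturb $\partial\Omega$ inward by short-time Mean Curvature Flow, use the evolution equation for $H$ together with the strong maximum principle (and the absence of closed minimal hypersurfaces in $\R^n$) to obtain strict mean-convexity at positive times, invoke Corollary~\ref{starshaped-cor} on the perturbed sets, and pass to the limit in the perimeter comparison. The only cosmetic difference is the final step: the paper routes the conclusion through the strictly outward minimising hull $\Omega^*$ and Remark~\ref{test-out}, whereas you compare $\Omega_t$ directly against an arbitrary competitor $F\supseteq\Omega$; both are equivalent, and your version is in fact slightly more self-contained since it does not require invoking the existence of $\Omega^*$.
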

\begin{proof}
Let $F_0: \partial \Omega \to \R^n$ be the immersion of $\partial \Omega$ in $\R^n$. Evolve this hypersurface by Mean Curvature Flow defining time dependent immersions $F: [0, \delta)\times \partial \Omega \to \R$ satisfying 
\begin{equation}
\label{mcf}
\frac{\partial F}{\partial s}(s, x) = - \HH(F(s,x))\,\nu(s,x), \qquad\qquad F(0, x) = F_0(x),
\end{equation}
where $\nu$ is the exterior unit normal to the hypersurface $\partial \Omega_s$ given by the immersion $F(s, \cdot) : \partial \Omega \to \R^n$, and $\HH$ is its related mean curvature. 
The standard short-time existence theory for geometric evolution equations (see e.g. \cite[Theorem 3.1]{Huis_Pold}) ensures the existence of a $\delta > 0$ such that a solution $F$ to \eqref{mcf} is well-defined. In other words, we have defined a sequence of bounded open sets $\{\Omega_s\}_{s \in [0, \delta)}$ with smooth boundary evolving by Mean Curvature Flow \eqref{mcf}. It is well known (see e.g. \cite[Theorem 3.2]{Huis_Pold}) that the mean curvature of these boundaries evolves by
\begin{equation}
\label{evomean}
\frac{\partial}{\partial s} \HH = \Delta \HH + \HH\abs{\hh}^2,
\end{equation}
where by $\hh$ we denote the second fundamental form that, as the other quantities appearing in the equation above, is to be understood with respect to the evolving metric on $\partial \Omega_s$. In particular, the standard Maximum Principle for parabolic equations implies that the mean curvature of $\partial \Omega_s$ for $s \in (0, \delta)$ is strictly positive. Since, by the smoothness of the flow, the sets $\Omega_s$ are still strictly starshaped for small $s$, we can conclude by Corollary \ref{starshaped-cor} that these approximating sets $\Omega_s$ are strictly outward minimising. Observe now that since the mean curvature $\HH$ of the initial datum $\partial \Omega$ is nonnegative, the flow \eqref{mcf} is actually a shrinking flow, and thus $\Omega_s \subseteq \Omega \subseteq \Omega^*$. Then, by the minimising property of $\Omega_s$ we have $\abs{\partial\Omega_s} \leq \abs{\partial \Omega^*}$, that, upon letting $s\to 0^+$, implies $\abs{\partial\Omega} \leq \abs{\partial \Omega^*}$. This means, by Remark \ref{test-out}, that $\Omega$ is outward minimising.
\end{proof}
We conclude by pointing out that in literature there are many generalisations of Gerhardt-Urbas results in non-flat ambient manifolds, for a suitable notion of starshapedness. We mention in particular the work of Brendle-Hung-Wang \cite{Brendle} that covers a considerable diversity of warped product ambient manifolds and, outside rotationally symmetric ambients, the work of Pipoli \cite{pipoli-complex} in the Complex Hyperbolic Space. It is easy to see that, thanks to these results, the above argument shows that the theses of Corollary \ref{starshaped-cor} and Proposition \ref{th-star} hold in these ambient manifolds too. Moreover, families of domains sitting inside flat $\R^n$, not necessarily star-shaped, but still smoothly immortal, have been recently provided in \cite{harvie-nonstarshaped}. Our analysis in particular applies to these domains and show that they are strictly outward minimising.
 
\section{Convergence of $p$-capacities to $\abs{\partial \Omega^*}$}
\label{sec:p-lim}
Aim of this section is to show that on manifolds satisfying either assumptions $(i)$ or $(ii)$ in Theorem~\ref{main-limit-1} we can recover the value of $\abs{\partial \Omega^*}$ as the limit for $p \to 1^+$ of the $p$-capacity of $\partial \Omega$, according to the statement of Theorem~\ref{main-limit-2}.
Let us start recalling some notation and the basic existence result of $p$-capacitary potentials.

\subsection{$p$-nonparabolicity and $p$-capacitary potentials}
Let $(M, g)$ be a complete noncompact Riemannian manifold, and let $p \geq 1$. 
We define the $p$-capacity   of a bounded set with $\mathscr{C}^{1, \alpha}$-boundary $\Omega \subset M$ as
\begin{equation}
\label{pcap-fin}
\capa_p(\Omega) \,  = \, \inf\left\{ \int_{\R^n} \abs{ \D f}^p \dd\mu \,\, \Big| \ f \geq \chi_\Omega, \,\, f \in \mathscr{C}^\infty_0(\R^n) \right\}.
\end{equation}
Let now $p > 1$.
Then $(M, g)$ is said to be $p$-nonparabolic if there exists a \emph{positive} $p$-Green's function $G: (M\times M) \setminus \mathrm{Diag}(M) \to \R$, that is, satisfying
\begin{equation}
\label{green-def}
\int\limits_M \Big\langle \abs{\D \, G_p{(O, \cdot)}}^{p-2}\D \,G_p{(O, \cdot)} \, \big\vert \, \D \phi\Big\rangle \dd\mu =  \phi (O)
\end{equation}
for any $\phi \in \mathscr{C}^\infty_c(M)$. The relation \eqref{green-def} is the weak formulation of the equation $-\Delta_p G(O, \cdot) = \delta_O$, where $\delta_O$ is the Dirac delta centered at $O$. Moreover, when referring to the $p$-Green's function of a $p$-nonparabolic manifold we mean the minimal one.

One can show that if $M$ is $p$-nonparabolic and $G_p \to 0$ at the infinity of any end then, for any open bounded $\Omega \subset M$ with $\mathscr{C}^{1, \alpha}$-boundary there exists a unique weak  solution $u_p \in W^{1, p}(M \setminus\overline{\Omega}) $ to 
\begin{align}
\label{pbp-chap3}
\left\{\begin{array}{lll}
\Delta_p u=0 & \mbox{in} & M\setminus \overline{\Omega} \, ,\\
\quad \,u=1 & \mbox{on} & \partial\Omega \, ,\\
\, u(x)\to 0 & \mbox{as} & d(O, x)\to\infty \, ,
\end{array}\right.
\end{align}
where we can suppose $O \in \Omega$. 
Moreover, the integral of $\abs{\D u_p}^p$ on $M \setminus \overline{\Omega}$ realises the $p$-capacity. Since a complete and self-contained proof of this general result does not seem easy to find in literature, we included a proof in the Appendix, adapting the argument used for \cite{salani-brunn} together with the deep $\mathscr{C}^{1, \alpha}$-estimates holding true up to the boundary of \cite{liebermann}.

\begin{theorem}
\label{existence-potential}
Let $(M, g)$ be a complete noncompact $p$-nonparabolic Riemannian manifold. Let $\Omega \subset M$ be an open bounded set with $\mathscr{C}^{k, \alpha}$-boundary for some $k \in \N_{>0}$ and $\alpha \in (0, 1)$. Let $O \in M.$ Assume also that the $p$-Green's function $G_p$ satisfies $G_p(O, x) \to 0$ as $d(O, x) \to \infty$. Then, there exists a unique weak solution $u_p$ to \eqref{pbp-chap3} attaining the datum on $\partial \Omega$ in $\mathscr{C}^{k, \beta}$ for some $\beta \in (0, 1)$.  Moreover, it holds
\begin{equation}
\label{p-cap-u-chap3}
\int\limits_{M\setminus \overline{\Omega}} \abs{\D u_p}^p \dd\mu = \capa_p(\Omega).
\end{equation} 
\end{theorem}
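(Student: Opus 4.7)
The plan is the classical exhaustion argument: solve the Dirichlet problem on a growing family of bounded domains and then pass to the limit, using the hypothesis that the $p$-Green's function vanishes at infinity as the mechanism that forces the limit to have the correct behaviour at infinity. Let $O\in\Omega$ and, for all $R$ large enough that $\overline{\Omega}\subset B(O,R)$, I would first consider the bounded domain $A_R = B(O,R)\setminus\overline{\Omega}$ and solve
\begin{equation}
\Delta_p u_R = 0 \text{ in } A_R, \qquad u_R = 1 \text{ on } \partial\Omega, \qquad u_R = 0 \text{ on } \partial B(O,R).
\end{equation}
Existence and uniqueness on a bounded regular domain is standard, obtained by direct minimisation of $v\mapsto \int_{A_R}|\D v|^p\dd\mu$ over the affine $W^{1,p}$-space generated by the boundary data; the minimiser is a weak solution by convexity, and uniqueness follows from strict convexity of the Dirichlet $p$-energy. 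By the $p$-Laplacian comparison principle one has $0\leq u_R\leq 1$ and $u_R$ is monotone non-decreasing in $R$.

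The key step is to prevent the monotone limit $u_p := \lim_{R\to\infty} u_R$ from being identically $1$ outside $\overline{\Omega}$. Here I would invoke the hypothesis $G_p(O,x)\to 0$: picking $r_0>0$ so that $\overline{\Omega}\subset B(O,r_0)$ and setting $m := \min_{\partial B(O,r_0)} G_p(O,\cdot) > 0$, I can compare $u_R$ with $G_p(O,\cdot)/m$ on $B(O,R)\setminus B(O,r_0)$, where both are $p$-harmonic. Since $u_R \leq 1 \leq G_p(O,\cdot)/m$ on $\partial B(O,r_0)$ and $u_R = 0 \leq G_p(O,\cdot)/m$ on $\partial B(O,R)$, the comparison principle gives
\begin{equation}
u_R(x) \,\leq\, \frac{G_p(O,x)}{m} \qquad \text{for all } x\in B(O,R)\setminus B(O,r_0),
\end{equation}
uniformly in $R$. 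This both confines the limit and yields $u_p(x)\to 0$ as $d(O,x)\to\infty$.

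With this uniform barrier in hand I would extract the limit. Testing the weak formulation of $u_R$ against $u_R-\eta$, where $\eta$ is any fixed smooth extension of the boundary datum to $M\setminus\overline{\Omega}$ with compact support, yields uniform control on $\int_{A_R}|\D u_R|^p\dd\mu$. Combined with $0\leq u_R\leq 1$, this gives weak $W^{1,p}_{\loc}$-convergence (along a subsequence and, thanks to monotonicity, along the whole family) to a function $u_p$, while DiBenedetto--Tolksdorf interior estimates and the Lieberman boundary $\mathscr{C}^{1,\beta}$ estimate (which is where the $\mathscr{C}^{k,\alpha}$-regularity of $\partial\Omega$ enters, propagating to the corresponding boundary regularity of $u_p$) upgrade the convergence to local $\mathscr{C}^{1}$. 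The limit is then a weak solution of \eqref{pbp-chap3} attaining $u_p=1$ on $\partial\Omega$ with the required boundary regularity. Uniqueness of $u_p$ in the stated class follows by a standard energy comparison: if $v$ were another solution, testing the equation satisfied by $u_p-v$ against $u_p-v$ (cut off to be compactly supported) and using the monotonicity inequality for the vector field $\xi\mapsto |\xi|^{p-2}\xi$ together with the decay of both functions forces $\D u_p = \D v$, hence $u_p = v$.

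Finally, to identify $\int_{M\setminus\overline{\Omega}}|\D u_p|^p\dd\mu$ with $\capa_p(\Omega)$, I would prove the two inequalities separately. For the upper bound, standard truncation and smoothing of $u_p$ (using a cutoff supported in $B(O,R)$, letting $R\to\infty$ and exploiting the integrability of $|\D u_p|^p$ together with the decay of $u_p$) produces competitors in the definition \eqref{pcap-fin} with energy converging to $\int|\D u_p|^p$. For the lower bound, given any admissible $f\in\mathscr{C}^\infty_c$ with $f\geq \chi_\Omega$, I would test the weak equation for $u_R$ against $u_R - f$, yielding
\begin{equation}
\int_{A_R} |\D u_R|^p \dd\mu \,=\, \int_{A_R} |\D u_R|^{p-2}\langle \D u_R,\D f\rangle\dd\mu
\,\leq\, \left(\int_{A_R}|\D u_R|^p\dd\mu\right)^{\!(p-1)/p}\!\left(\int|\D f|^p\dd\mu\right)^{\!1/p},
\end{equation}
so $\int_{A_R}|\D u_R|^p\dd\mu \leq \int|\D f|^p\dd\mu$, and Fatou together with strong local convergence of the gradients lets me pass to the limit. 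Taking the infimum over $f$ gives the lower bound and hence \eqref{p-cap-u-chap3}.

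I expect the most delicate step to be the uniform decay estimate via the $p$-Green's function: ensuring that the limit is not the constant $1$ is exactly where the $p$-nonparabolicity hypothesis and the qualitative assumption $G_p\to 0$ are used, and it is also this step that makes the result genuinely non-trivial compared to the bounded-domain theory.
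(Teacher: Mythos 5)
Your proposal follows essentially the same exhaustion scheme as the paper: solve the Dirichlet problem on annular domains $B(O,R)\setminus\overline\Omega$, use a comparison with $G_p(O,\cdot)$ to get a uniform decaying barrier, pass to the limit via interior and boundary $\mathscr{C}^{1,\beta}$ estimates, and identify the Dirichlet energy of the limit with $\capa_p(\Omega)$. That said, there are a couple of points that, as written, fall short of the claimed conclusion and are worth flagging.

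The first concerns the claimed $\mathscr{C}^{k,\beta}$ boundary regularity for $k\geq 2$. Lieberman's estimates only give $\mathscr{C}^{1,\beta}$ regularity up to the boundary, and the $p$-Laplace equation degenerates where $\D u_p=0$, so no Schauder bootstrap is available in general. The paper closes this gap by invoking Tolksdorf's Hopf Lemma to establish $|\D u_p|>0$ on $\partial\Omega$, and then, since the gradient is continuous up to the boundary, $|\D u_p|>0$ on a full neighbourhood of $\partial\Omega$; there the equation is uniformly elliptic with smooth coefficients and classical quasilinear Schauder theory gives $\mathscr{C}^{k,\beta}$ regularity when $\partial\Omega\in\mathscr{C}^{k,\alpha}$. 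Without this nondegeneracy step, stating that the $\mathscr{C}^{k,\alpha}$-regularity of $\partial\Omega$ "propagates" is not justified.

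The second concerns your uniqueness argument. Testing the equation for $u_p-v$ against $(u_p-v)\eta_R^p$ produces a boundary (cutoff) error term of the form
\begin{equation}
\Big\lvert\,p\!\int \eta_R^{p-1}(u_p-v)\,\big\langle |\D u_p|^{p-2}\D u_p-|\D v|^{p-2}\D v,\,\D\eta_R\big\rangle\,\dd\mu\,\Big\rvert \,,
\end{equation}
and to show this tends to $0$ one needs to control $\int|\D\eta_R|^p$, i.e.\ $|B(2R)|/R^p$, which on a general complete manifold with merely the stated hypotheses is not bounded. The qualitative decay $u_p,v\to 0$ alone does not compensate for fast volume growth. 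The paper's uniqueness argument is the simpler and more robust route: compare $u_p$ with $v+1/k$ on a large ball where $u_p\leq 1/k$ on the outer boundary, apply Tolksdorf's comparison principle, and let $k\to\infty$; this uses only pointwise decay and no integral cutoff estimate. A smaller related issue is in your capacity lower bound: $u_R-f$ is not in $W^{1,p}_0(A_R)$ when $f>1$ somewhere on $\partial\Omega$; one should first replace $f$ by $\min(f,1)$ (admissible and with no larger energy) so that the difference vanishes on $\partial\Omega$, which is what the paper's reduction to test functions with $\psi\equiv 1$ on $\Omega$ accomplishes.
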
 

\begin{remark}
\label{ricci-existence}
It is worth pointing out that, by \cite{holopainen1}, the above general result fully describes the nonnegative Ricci curvature case in terms of growth of the volume of geodesic balls. Indeed, we know that if $(M, g)$ has nonnegative Ricci curvature and 
\begin{equation}
\label{holo-nonpar}
\int_1^{+\infty} \left(\frac{t}{\abs{B(O,  t)}}\right)^{1/(p-1)} \dd t < +\infty
\end{equation}
for any $O \in M$, then \cite[Proposition 5.10]{holopainen1} gives the $p$-nonparabolicity of $(M, g)$, and the decay estimate  for the positive $p$-Green's function $G_p$ allows to conclude that $G_p \to 0$ at infinity. This is observed in \cite[Corollary 2.6]{pucci-mari}.

On the other hand, if the integral in \eqref{holo-nonpar} diverges in a complete noncompact $(M, g)$, by \cite[Proposition 1.7]{holopainen1} $(M, g)$ is $p$-parabolic, this in particular implies that $\capa_p(\Omega) = 0$ (see e.g. \cite[(1.5)]{holopainen1}) for any bounded $\Omega \subset M$ with smooth boundary and in particular the conclusion of Theorem \ref{existence-potential} cannot hold true.

Finally we point out that if an Isoperimetric Inequality \eqref{iso-cond-intro} holds true on a complete noncompact $(M, g)$, then such manifold is $p$-nonparabolic for any $1 \, < \, p \, <\, n$ and the $p$-Green's function vanishes at infinity, as realised in \cite[Theorem 3.13]{mari-rigoli-setti}, and then Theorem \ref{existence-potential} is in force.
\end{remark}
\medskip

\subsection{Proof of Theorem~\ref{main-limit-2} and a sharp counter-example to the convergence.} We are now  in position to prove our convergence result. Namely, we are going to show that under either assumption $(i)$ or $(ii)$ in Theorem~\ref{main-limit-1} the  $p$-capacity of $\Omega$ approximates the area of $\partial \Omega^*$. 

It is interesting to observe that in \emph{Step 3} below -- which is probably the most important and original step of the whole argument -- the role played by two assumptions $(i)$ and $(ii)$ is quite different. 
In presence of an Euclidean-like Isoperimetric Inequality, an argument inspired by \cite{xu} allows to show that 
\[
\capa_1 (\Omega) \, \leq \, \mathrm{C}_{n, p} \, \capa_p (\Omega),
\]
for some constant $\mathrm{C}_{n, p}$ fulfilling $\mathrm{C}_{n, p} \to 1$ as $p\to 1^+$. 
On the other hand, if $(M, g)$ is a manifold with nonnegative Ricci curvature satisfying the polynomial superlinear uniform volume growth condition, we are still able to prove \eqref{limit-th} by exploiting a decay estimate of the $p$-Green's function of $(M, g)$ with an explicit dependence on $p$. 
This estimate originated in \cite[Proposition 5.7]{holopainen1}, where it was proved for any point on the boundary of an end, and it has  been applied in \cite{mari-rigoli-setti} together with the assumed \eqref{minerbe} to obtain an analogous inequality holding true on any point outside some compact set (see \cite[Theorem 3.8]{mari-rigoli-setti}). In this regard, we observe that being able to work out Holopainen's inequality without the restriction of lying in the boundary of an end would allow to  relax the assumption \eqref{minerbe} in \cite{mari-rigoli-setti}, and consequently in the present theory, to 
\begin{equation}
\label{superlinear-vero}
\int_1^{+\infty} \frac{t}{\abs{B(O,  t)}} \dd t < +\infty,
\end{equation}
that is, roughly speaking, a strictly superlinear volume growth assumption.

\begin{remark}
\label{minerbe-relax}
We point out that in \cite{mari-rigoli-setti} the authors
actually assumed 
\begin{equation}
\label{minerbe-vero}
\frac{\abs{B(O, t)}}{\abs{B(O, s)}} \geq \mathrm{C} \left(\frac{t}{s}\right)^b
\end{equation}
for any $t \geq s > 0$ and some constant $\mathrm{C} > 0$
 to provide the inequality described above. In turn, \eqref{minerbe-vero}, that is implied by \eqref{minerbe}, is used to invoke the technical \cite[Proposition 2.8]{minerbe-sobolev}, allowing to control the size of the bounded components of the complement of $M \setminus B(O, R)$ for big $R > 0$. In fact, the desired decay estimate on the $p$-Green's function, and in turn, the validity of Theorem \ref{main-limit-2} is ensured on any Riemannian manifold with nonnegative Ricci curvature satisfying \eqref{superlinear-vero} such that for any $R$ big enough $M \setminus B(O, R)$ does not have bounded components. 
\end{remark}
\begin{proof}[Proof of Theorem \ref{main-limit-2}]
Let $(M, g)$ be a complete noncompact Riemannian manifold satisfying $(i)$ or $(ii)$ in the statement. Then, by Corollary \ref{existence-iso-cor} and Corollary \ref{imcf-hulls-cor} respectively, for any bounded $\Omega$ with finite perimeter the strictly outward minimising hull $\Omega^*$ is an open bounded maximal volume solution to the least area problem with obstacle $\Omega$.
\smallskip

Before going on, let us recall that under the assumption of Theorem \ref{main-limit-1} those of Theorem \ref{existence-potential} are satisfied, yielding a $p$-capacitary potential $u_p$ realising the $p$-capacity of $\Omega$, as illustrated in Remark \ref{ricci-existence}. We aim at proving that
\begin{equation}
\label{chain-pcap}
\abs{\partial \Omega^*} \leq \capa_1(\Omega) \leq \liminf_{p \to 1^+} \capa_p(\Omega) \leq \limsup_{p \to 1^+} \capa_p(\Omega) \leq \abs{\partial \Omega^*}.
\end{equation}
We divide the proofs in several steps.

\medskip

\emph{Step 1.} The first and easiest inequality to show in \eqref{chain-pcap} is
\begin{equation}
\label{chain-pcap-1}
\abs{\partial \Omega^*} \leq \capa_1(\Omega).
\end{equation}
It suffices to observe that for any $f \in \mathscr{C}^\infty_c(M)$ with $f \geq \chi_\Omega$ we have, by co-area formula,
\begin{equation}
\label{1-ineq-pre}
\int\limits_{M} \abs{\D f} \dd\mu 
\,\geq\, 
\int_0^1 \abs{\{f = t\}} \dd t 
\,\geq\, 
\inf \Big\{ \abs{\partial E}\ \big|\ {\Omega} \subset E, \, \partial E \, \text{smooth}\Big\} 
\,\geq\, \abs{\partial \Omega^*}.
\end{equation} 
In particular, taking the infimum over any such $f$, we get
\eqref{chain-pcap-1}.

\medskip

\emph{Step 2.} Now, we prove that
\begin{equation}
\label{chain-pcap-2}
\limsup_{p \to 1^+} \capa_p(\Omega) \leq \abs{\partial \Omega^*}.
\end{equation}
Let $E$ be any open and bounded set in $M$ with smooth boundary such that $\Omega \subset E$. Define, for $x \in M$, the function $d_E (x) = \text{dist}(x, E)$. 
Moreover, let us introduce 
a smooth cut-off function $\chi_\epsilon$ fulfilling
\begin{equation}
\label{chi}
\begin{cases}
\,\,\chi_\epsilon(t)= 1 & \mbox{in} \,\, t < \epsilon,  
\\
\,\,-\frac{1}{\epsilon} <\dot{\chi}(t) < 0 & \mbox{in}\,\, \epsilon \leq t \leq {2}\epsilon
\\
\,\,\chi_\epsilon(t) =0 &\mbox{in} \,\, t > {2}\epsilon,
\end{cases}
\end{equation}
and let us set $\eta_\epsilon(x) = \chi_\epsilon (d_E (x))$. Choosing $\epsilon$ small enough, it is easily seen, by the regularity of $d_E$ in a neighbourhood of $E$ (first observed for $\R^n$ seemingly in \cite[Lemma 14.16]{Gil_Tru_book}, see \cite[Proposition 5.17]{mantegazza-distance} for a self contained proof in a general ambient manifold), that the function $\eta_\epsilon$ is an admissible competitor in \eqref{pcap-fin}. Then,
\[
\capa_p(\Omega) \leq \int\limits_{M} \abs{\D \eta_\epsilon}^p \dd\mu
\]
for any $p\geq 1$.
Letting $p \to 1^+$, we get
\[
\limsup_{p \to 1^+} \capa_p(\Omega) \leq \int\limits_{M} \abs{\D \eta_\epsilon} \dd\mu = \int\limits_\epsilon^{2\epsilon} \abs{\dot{\chi_\epsilon}(t)} \left\vert\{d_E = t\}\right\vert \dd t,
\]
where in the last equality we applied the coarea formula combined with the fact that $\abs{\D d_E} = 1$ in a neighbourhood of $E$.
By the Mean Value Theorem, there exist $r_\epsilon \in (\epsilon, 2\epsilon)$ such that the above right hand side satisfies
\[
\int\limits_\epsilon^{2\epsilon} \abs{\dot{\chi_\epsilon}(t)} \left\vert\{d_E = t\}\right\vert \dd t = \epsilon \abs{\dot\chi_\epsilon(r_\epsilon)}\abs{\{d_E = r_\epsilon\}} <\abs{\{d_E = r_\epsilon\}},
\]
where the last inequality is due to the second condition in \eqref{chi}. Since,
as $r_\epsilon \to 0^+$, we clearly have
\[
\abs{\{d_E = r_\epsilon\}} \to \abs{\partial E},
\]
and we conclude that
\[
\limsup_{p \to 1^+} \capa_p(\Omega) \leq \abs{\partial E}
\]
for any bounded open set $E$ with smooth boundary containing $\Omega$.
In particular, considering a sequence of bounded sets $\Omega_k$ with smooth boundary containing $\Omega^*$ and with $\abs{\partial \Omega_k} \to \abs{\partial \Omega^*}$ as $k \to \infty$, provided in Lemma \ref{schmidt}, we get \eqref{chain-pcap-2}.

\medskip

\emph{Step 3.} The most involved step in the proof of \eqref{chain-pcap} is in the  inequality
\begin{equation}
\label{claim}
\capa_1(\Omega) \leq \liminf_{p \to 1^+} \capa_p(\Omega). 
\end{equation}
In order to show it, we treat separately the cases when $(M, g)$ satisfies an Euclidean-like Isoperimetric Inequality and the case where $(M, g)$ has nonnegative Ricci curvature and satisfies \eqref{minerbe}.

\smallskip 

\emph{Step 3 - Case 1: Assume that $(M, g)$ satisfies an Euclidean-like Isoperimetric Inequality.} 
Let $\mathrm{C}_{\rm Sob}$ be the  $L^1$-Sobolev constant, that is
\begin{equation}
\label{sobolev-inproof}
\left(\int_M {f}^{{n}/{(n-1)}}\dd\mu\right)^{(n-1)/n} \leq \mathrm{C}_{\rm Sob} \int_M \abs{\D f} \dd\mu
\end{equation}
for any nonnegative $f \in \mathscr{C}^{\infty}_0 (M)$. It is well known and easy to check that applying \eqref{sobolev-inproof} to $f^{\frac{n-1}{n-p}p}$ yields the $L^p$-Sobolev inequality
\begin{equation}
\label{psobolev-inproof}
\left(\int_M {f}^{p^*}\dd\mu\right)^{(n-1)/n} \leq \, \mathrm{C}_{n, p} \int_M \abs{\D f}^p \dd\mu
\end{equation}
with 
\begin{equation}
\label{cnp}
\mathrm{C}_{n, p} = \left[\frac{\mathrm{C}_{\rm Sob}(n-1)p}{(n-p)}\right],
\end{equation}
and
\begin{equation}
\label{pstar}
p^* \, = \, \frac{np}{n-p}
\end{equation}
for any $1 < p < n$. 
The argument yielding the key estimate \eqref{xu3} below is inspired by the proof of \cite[Theorem 3.2]{xu}.
By the definition of $1$-capacity and the H\"older inequality we get, for any $q > 0$,
\begin{equation}
\label{xu1}
\capa_1(\Omega) \leq \int\limits_{M} \abs{\D f^q} \dd \mu = q \int\limits_{M} f^{q-1} \abs{\D f} \dd\mu \leq q \left(\int_{M}f^{{(q-1)}\frac{p}{p-1}} \dd\mu\right)^{{(p-1)}/{p}} \left(\int_{M} \abs{\D f}^{p} \dd\mu\right)^{1/p}\!\!\!.
\end{equation}
Choose then
\begin{equation}
\label{q-xu}
q_p = 1 + p^*\frac{(p-1)}{p}
\end{equation}
and observe that $q_p > 1$. Then, applying the $L^p$-Sobolev inequality \eqref{psobolev-inproof} we obtain 
\begin{equation}
\label{xu2}
\capa_1(\Omega) \leq q_p \,{\rm C}_{n, p}^{\,(p-1)/p}\left(\int\limits_{M} \abs{\D f}^{p} \dd\mu\right)^{\!\!p^*(p-1)/p^2 + 1/p} \!\!\!\!\!\!\!\!\!\!\!\!\! = \,\,\, q_p \,\, {\mathrm{C}_{n, p}}^{\,(p-1)/p}\left(\int_{M} \abs{\D f}^{p} \dd\mu\right)^{\!\!(n-1)/(n-p)}.
\end{equation}
Taking the infimum in the rightmost hand side of the inequality above over any $f \in \mathscr{C}^{\infty}_c(M)$ satisfying $f \geq \chi_{\Omega}$ we are left with
\begin{equation}
\label{xu3}
\capa_1(\Omega) \leq  \,\,\, q_p \,\, {\mathrm{C}_{n, p}}^{\,(p-1)/p} \capa_p(\Omega).
\end{equation}
Letting $p \to 1^+$ in the above inequality, and observing that from the expressions  \eqref{cnp} and \eqref{q-xu} both $q_p$ and $\mathrm{C}_{n, p}^{(p-1)/p}$ tend to $1$, we get \eqref{claim} under the validity of an Euclidean-like Isoperimetric Inequality.

\smallskip

\emph{Step 3 - Case 2: Assume that $(M, g)$ has nonnegative Ricci curvature and satisfies \eqref{minerbe}.}
The superlinear volume growth following from \eqref{minerbe} guarantees that Holopainen's condition 
\begin{equation}
\label{holo-inproof}
\int_1^{+\infty} \frac{t}{\abs{B(O,  t)}} \dd t < +\infty
\end{equation}
is satisfied, and in particular \cite[Theorem 5.10]{holopainen1} ensures that $(M, g)$ is $p$-nonparabolic for $1<p< b$. 
Moreover, \cite[Theorem 3.8]{mari-rigoli-setti} gives the following decay estimate 
\begin{equation}
\label{li-yau-cond}
G_p(O, x) \leq \frac{{\mathrm{C}}^{1/(p-1)}}{(p-1)^2} \int_{d(O, x)}^\infty \left[\frac{t}{\abs{B(O, t)}}\right]^{1/(p-1)}\dd t.
\end{equation}
for any $x \in M \setminus B(O, R_1)$ for some $R_1 > 0$.  We claim that the same type of estimate holds for $u_p$.
Indeed, choosing $O \in \Omega$, a trivial comparison argument immediately yields
$u_p(x) \leq  G_p (O, x)/(\inf_{\partial \Omega} G_p) $. Moreover, it follows from the convergence results in \cite{mari-rigoli-setti} that the function $-(p-1) \log G_p(O, \cdot)$ converges locally uniformly to a continuous function (more precisely to a solution of the IMCF emanating from $O$) and in particular $-(p-1) \log G_p(O, \cdot)$ is uniformly bounded in the compact set $\partial \Omega$ uniformly in $p$ small enough. This implies that $(\inf_{\partial \Omega} G_p) \geq \mathrm{C}_2^{1/(p-1)}$ for some $\mathrm{C}_2$ not depending on $p$. Combining it with the above comparison we thus obtain  
\begin{equation}
\label{li-yau-cond-u}
u_p(x) \leq \frac{{\mathrm{C}_3}^{1/(p-1)}}{(p-1)^2} \int_{d(O, x)}^\infty \left[\frac{t}{\abs{B(O, t)}}\right]^{1/(p-1)}\dd t
\end{equation}
outside some ball.
The volume growth condition $\abs{B(O, t)} \geq C_{\rm vol} \,\, t^{b}$, following from \eqref{minerbe}, improves the above estimate to
\begin{equation}
\label{bound-u}
u_p(x) \leq \frac{1}{(b - p)} \frac{\mathrm{C}_4^{1/(p-1)}}{(p-1)} r(x)^{-(b-p)/(p-1)}, 
\end{equation}
where we denoted $r(x) = d(O, x)$, outside some big ball and for a positive constants $\mathrm{C}_4$ uniform in $p$ as $p \to 1^+$.

Extend now $u_p$ to be equal to $1$ on $\Omega$, for simplicity keeping the same name. We claim that ${{u}_p}^{q_p}$ is in $W^{1, 1}(M)$, for $p$ close enough to $1$, where $q_p$ is defined in \eqref{q-xu}. Clearly, \eqref{bound-u} implies that ${{u}_p}^{q_p} \in L^1(M)$ for $p$ close enough to $1$, and also $u_p$. Applying the H\"older inequality as in \eqref{xu1}  we also get
\begin{equation}
\label{xu-up}
\begin{split}
\int_M \abs{\D {u}_p^{q_p}} \dd\mu &\leq q_p \left(\int_{M}u_p^{p^*}\dd\mu\right)^{(p-1)/p}\left(\int_{M\setminus \overline{\Omega}} \abs{\D u_p}^p\dd\mu\right)^{1/p} \\ 
&=  q_p \left(\int_{M}u_p^{p^*}\dd\mu\right)^{(p-1)/p} \!\!\!\!\capa_p(\Omega)^{1/p} \\
&\leq q_p \left(\int_{M}u_p \dd\mu\right)^{(p-1)/p} \!\!\!\!\capa_p(\Omega)^{1/p}
\end{split},
\end{equation}
where the  equality is \eqref{p-cap-u-chap3}, and the last inequality is due to $u_p \leq u_p^{p*}$ following from $ 0 < u \leq 1$ and $p^* > 1$.
Since $u_p \in L^1(M)$ uniformly in $p$ close to $1$, and so does the $p$-capacity of $\Omega$ as $p \to 1^+$ by \eqref{chain-pcap-2} in \emph{Step 2}, \eqref{xu-up} implies that $u_p^{q_p} \in W^{1, 1} (M)$ uniformly as $p \to 1^+$.  
Since the class of competitors for $\capa_1(\Omega)$ can be easily relaxed to the class of $W^{1, 1}(M)$ without changing the infimum (see e.g. \cite[Chapter 2]{heinonen-book}), we can estimate $\capa_1(\Omega)$ from above with ${u}_p^{q_p}$ and get, from \eqref{xu-up}, that
\begin{equation}
\label{xu-up1}
\capa_1(\Omega) \leq q_p \left(\int_{M}u_p\dd\mu\right)^{(p-1)/p} \!\!\!\!\capa_p(\Omega)^{1/p}.
\end{equation}
We focus on the first round bracket in the rightmost hand side of \eqref{xu-up1}. Let us decompose $M$ in $\overline{B(O, R)} \cup( M \setminus \overline{B(O, R)})$. We obtain, with the aid of \eqref{bound-u},
\begin{equation}
\label{split-m}
\int_M {u}_p \dd\mu \leq \big\vert{\overline {B(O, R)}}\big\vert + \frac{1}{(b-p)} \frac{\mathrm{C}_4^{1/(p-1)}}{(p-1)} \int\limits_{M\setminus \overline{B(O, R)}}  r^{-(b-p)/(p-1)}  \dd \mu.
\end{equation}
We estimate the integral in the right hand side above as follows. We have
\begin{equation}
\label{series}
\begin{split}
\int\limits_{M \setminus \overline{B(O, R)}} r^{-(b-p)/(p-1)}  \dd \mu  &= 
\sum_{j=1}^{+\infty} \quad \int\limits_{B(O, 2 j R) \setminus \overline{B(O, j R)}} r^{-(b-p)/(p-1)}  \dd \mu \\ 
&\leq \sum_{j=1}^{+\infty}(jR)^{-[(b-p)/(p-1)]} \Big[\abs{B(O, 2jR)} - \abs{B(O, jR)}\Big] \\ 
&\leq \abs{\mathbb{B}^n} 2^n R^{n - [(b-p)/(p-1)]} \sum_{j=1}^{+\infty} j^{n - [(b-p)/(p-1)]} \leq \mathrm{C}_5  R^{n - [(b-p)/(p-1)]},
\end{split}
\end{equation}
where we used Bishop-Gromov to estimate $\abs{B(O, 2jR)} \leq \abs{\mathbb{B}^n}(2jR)^n$, and, in the last step, the convergence of the series, that holds true for $p$ close to $1$ with a value that is uniform in $p$. 
Resuming, we have, plugging the outcome of \eqref{series} into \eqref{split-m},
\begin{equation}
\label{stimafinale-u}
\int_M {u}_p \dd\mu \leq \Big\vert{\overline {B(O, R)}}\Big\vert +  R^n\frac{1}{(b-p){(p-1)}} \left[\frac{\mathrm{C}_6}{R^{(b-p)}}\right]^{1/(p-1)}
\end{equation}
for any $R$ big enough. Choose it in order to satisfy $R^{b-p} > \, \mathrm{C}_6$ for any $p$ close enough to $1$ to see that, with this choice, the second summand in the above right hand side vanishes in the limit as $p \to 1^+$. In particular, letting $p \to 1^+$ in \eqref{xu-up1} we infer
\begin{equation}
\capa_1(\Omega) \leq   \liminf_{p \to 1^+}\capa_p(\Omega)
\end{equation}
also under the assumption of nonnegative Ricci curvature with polynomial uniform volume growth \eqref{minerbe}.
\medskip

Arranging \eqref{chain-pcap-1}, \eqref{chain-pcap-2} and \eqref{claim} into \eqref{chain-pcap} completes the computation of the limit of $\capa_p(\Omega)$ as $p \to 1^+$.  
\end{proof}

In light of Theorem \ref{main-limit-2}, one could wonder whether the mere existence of a 
bounded strictly outward minimising hull  implies  a $p$-capacitary approximation of $\abs{\partial \Omega^*}$ in the sense of \eqref{limit-th}. The following counterexample, inspired in part by \cite[Section 4]{kotschwar}, provides  a manifold admitting an exhausting sequence of strictly outward minimising sets but such that $\capa_p(\Omega) = 0$ for any $\Omega \subset M$ with smooth boundary.
\begin{example}[A $p$-parabolic manifold with nonnegative Ricci curvature admitting $\Omega^*$]
\label{cigar}
Consider, for $n \geq 2$ the complete noncompact Riemannian manifold $(M, g)$ whose metric splits as $ g= \dd \rho \otimes \! \dd\rho + \tanh^2(\rho) g_{\Sf^{n-1}}$ on $[0, +\infty) \times \Sf^{n-1}$. For $n = 2$, this is the celebrated Hamilton's cigar soliton \cite{hamilton-cigar}. The Riemannian manifold $(M, g)$ has linear volume growth and nonnegative Ricci curvature. In particular, 
\begin{equation}
\label{holo-par}
\int_1^{+\infty} \frac{t}{\abs{B(O,  t)}} \dd t = +\infty,
\end{equation}
and by \cite[Proposition 1.7]{holopainen1} $(M, g)$ is $p$-parabolic for any $p > 1$. In particular, it is well known (see e.g. \cite[(1.5)]{holopainen1}) that $\capa_p(\Omega) = 0$ for any open bounded $\Omega \subset M$ with smooth boundary. However, the level sets of $\rho$ are strictly outward minimising, and so they provide the exhausting sequence (in fact a foliation) required in Theorem \ref{main-hull} for the well-posedness of $\Omega^*$. To check this last assertion, we invoke again the level set formulation of the Inverse Mean Curvature Flow, and the minimising properties of such level sets discovered in \cite{Hui_Ilm}. Let indeed $B = \{\rho < 1\}$, and consider on $M \setminus B$ the function 
\[
w = (n-1) \log \left(\frac{\tanh \rho}{\tanh 1}\right)
\] 
extended with continuity at $0$ on $B$. The function $w$ is immediately seen to satisfy the level set equation \eqref{imcf-eq} in $M \setminus \overline{B}$. Since the level sets of $w$ sweep out the whole $M$ as $\rho \to +\infty$, \cite[Smooth Flow Lemma 2.3]{Hui_Ilm} implies that $w$ is also a Weak Inverse Mean Curvature Flow in the sense recalled in Subsection \ref{sub-imcf}. Observe also that $w$ is proper, although $w\not\to + \infty$ as $\rho \to + \infty$ (compare with Remark \ref{properness}). In particular by \cite[Minimizing Hull Property 1.4, (ii)]{Hui_Ilm}, the bounded sets ${\rm Int}\{w \leq t\} = \{ w < t \}$ are strictly outward minimising for any $t \geq 0$. Theorem \ref{main-hull} applies and yields a well posed bounded strictly outward minimising hull to any bounded open set $\Omega \subset M$ with smooth boundary, whose area cannot be recovered by $p$-capacities, that vanish for any $p > 1$.  
\end{example} 

\section{The sharp Isoperimetric inequality in manifolds with nonnegative Ricci curvature}
\label{sec:isoperimetric}
As already described in the Introduction, even ensuring in a noncompact Riemannian manifold the existence of a mean-convex exhaustion does not seem to be a straightforward task.
One of the consequences of Theorem \ref{main-limit-1} becomes then providing such an exhaustion on manifolds with nonnegative Ricci curvature and Euclidean Volume Growth, that is in particular given by least area sets in presence of (bigger and bigger) smooth obstacles. This will be key in proving Theorem \ref{isoperimetric-theorem}, object of this section.

\smallskip

Let us now enlist some of the ingredients we are going to employ, in addition to the mean-convex exhaustion. The main one is the Willmore-type inequality  proved in \cite{Ago_Fog_Maz_1},
that we state for $\mathscr{C}^{1, 1}$-boundaries.
It can be deduced from its smooth version by approximating in the $W^{2, n-1}$-topology the functions locally describing the boundary involved by smooth ones.
We refer to Remark \ref{weak-mean} for the notion of weak mean curvature.
\begin{theorem}[Willmore-type inequality]
\label{willmore-th}
Let $(M, g)$ be a complete noncompact Riemannian manifold with nonnegative Ricci curvature and Euclidean volume growth. Let $\Omega \subset M$ be a bounded open set with $\mathscr{C}^{1, 1}$-boundary. Then, the inequality
\begin{equation}
\label{willmoref}
\int_{\partial \Omega} \left\vert\frac{\HH}{n-1}\right\vert^{n-1} \dd\sigma \geq \AVR \, \abs{\Sf^{n-1}}
\end{equation}
holds true, where with $\HH$ we are denoting the weak mean curvature of $\partial \Omega$.
\end{theorem}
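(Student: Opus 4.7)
The plan is to reduce to the smooth case and then establish the inequality for smooth mean-convex boundaries via a nonlinear potential-theoretic argument. Given the remark that follows the statement (``it can be deduced from its smooth version by approximating in the $W^{2, n-1}$-topology\ldots''), I would split the proof into an approximation step and the substantive smooth case.

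\emph{Approximation step.} Since $\partial \Omega$ is $\mathscr{C}^{1,1}$, around each $p \in \partial \Omega$ it can be written as the graph of a $\mathscr{C}^{1,1}$ function $f$ over the tangent hyperplane at $p$, with $\na f$ and the weak Hessian $D^2 f$ uniformly bounded in $L^\infty$. Mollify to obtain smooth $f_\epsilon$ with uniform $\mathscr{C}^{1,1}$ bounds and $f_\epsilon \to f$ in $W^{2, n-1}$. A partition of unity then yields a sequence of smooth bounded domains $\Omega_\epsilon$ converging to $\Omega$ so that the local graph functions converge in $W^{2, n-1}$. Since the mean curvature is a smooth nonlinear function of the first and second derivatives of $f$ and the first derivatives remain uniformly bounded, the nonlinearity is controlled and $\HH_\epsilon \to \HH$ in $L^{n-1}$. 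In particular
\[
\int_{\pa \Omega_\epsilon} \left|\frac{\HH}{n-1}\right|^{n-1} \!\!\dd\sigma \,\longrightarrow\, \int_{\pa \Omega} \left|\frac{\HH}{n-1}\right|^{n-1} \!\!\dd\sigma,
\]
and applying the smooth Willmore-type inequality to each $\Omega_\epsilon$ yields the statement for $\Omega$ by passing to the limit.

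\emph{Smooth case.} For $\pa \Omega$ smooth I would follow the nonlinear potential-theoretic strategy of \cite{Ago_Fog_Maz_1}. Euclidean volume growth together with $\ric \geq 0$ implies $p$-nonparabolicity for $1 < p < n$ and decay of the $p$-Green's function at infinity (Remark~\ref{ricci-existence}), so Theorem~\ref{existence-potential} produces the unique $p$-capacitary potential $u_p$ of $\Omega$, smooth away from the critical set and $\mathscr{C}^{1,\alpha}$ up to $\pa \Omega$. The heart of the argument is a monotonicity formula: the weighted boundary integral
\[
t \,\longmapsto\, t^{-\gamma(n,p)} \int_{\{u_p = t\}} |\na u_p|^{\delta(n,p)} \dd \sigma,
\]
with appropriate exponents $\gamma,\delta$, is monotone in $t$. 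The proof rests on a Bochner-type identity applied to $u_p$, where the ${\rm Ric} \geq 0$ assumption supplies the crucial sign. Comparing $t = 1$ with $t \to 0^+$ produces, on the one hand, a boundary integral on $\pa \Omega$ that after integration by parts (using $\Delta_p u_p = 0$ and the positivity of $|\na u_p|$ on a mean-convex boundary) is controlled by $\int_{\pa \Omega} |\HH|^{(\text{exponent depending on } p)} \dd\sigma$; and on the other hand an asymptotic invariant that, thanks to Euclidean volume growth and the sharp asymptotics of $u_p$ at infinity, equals $\AVR \, |\Sf^{n-1}|$ times a $p$-dependent constant. Sending $p \to 1^+$ selects the exponents $n-1$ on $\HH/(n-1)$ on the boundary side and recovers exactly $\AVR \, |\Sf^{n-1}|$ on the asymptotic side.

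\emph{Main obstacle.} The delicate step is the smooth case, specifically the identification of the asymptotic value of the monotone quantity as $p \to 1^+$ with $\AVR \, |\Sf^{n-1}|$. This requires uniform-in-$p$ estimates on $u_p$ and $|\na u_p|$ at infinity, relying on the decay of the $p$-Green's function together with Bishop--Gromov --- very much in the spirit of the $p \to 1^+$ analysis used in the proof of Theorem~\ref{main-limit-2}, Step~3, Case~2 above. The approximation step, by contrast, is essentially routine once the smooth case is in hand, provided one is careful to glue local graph approximations consistently and exploit the uniform $\mathscr{C}^{1,1}$ bound to control the nonlinearity in the mean curvature.
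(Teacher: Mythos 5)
Your approximation step coincides with the paper's proof of the theorem, which is itself a single sentence: mollify the local $\mathscr{C}^{1,1}$ graph descriptions, converge in $W^{2,n-1}$, observe that the weak mean curvature (linear in the second derivatives with coefficients depending smoothly on the uniformly controlled gradient) converges in $L^{n-1}$, and pass to the limit in \eqref{willmoref}. The gluing issue you flag can be sidestepped entirely by mollifying a single defining function, e.g.\ the signed distance, rather than patching local graph mollifications.

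The substantive point is that the paper does not reprove the smooth case: it cites~\cite{Ago_Fog_Maz_1} as a black box, and your sketch of what that reference does is misremembered. The argument there is carried out with the electrostatic ($p=2$, harmonic) capacitary potential $u$ alone; the monotone quantity is a one-parameter family $t \mapsto t^{-\gamma(\beta)}\int_{\{u=t\}}\abs{\na u}^{\beta+1}\dd\sigma$ indexed by a \emph{real} parameter $\beta$, and the Willmore inequality follows by comparing the two endpoints of this monotone function at the threshold value $\beta = n-2$. There is no $p$-harmonic family and no $p \to 1^+$ limit in that proof. The $p \to 1^+$ machinery (the decay of $G_p$ with explicit $p$-dependence, the uniform-in-$p$ estimates, the capacity limit) is the tool of~\cite{Ago_Fog_Maz_2} and of Theorem~\ref{main-limit-2} of the present paper, whose target is $\abs{\partial\Omega^*}$, not the Willmore functional. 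Since the paper treats the smooth case as cited, this misattribution is not a gap in the proof as the paper intends it; but if you were to actually execute the smooth-case sketch you wrote, the $p \to 1^+$ limit would not naturally deliver the exponent $n-1$ on $\HH/(n-1)$ nor the constant $\AVR\,\abs{\Sf^{n-1}}$, and you would want to switch to the fixed-$p=2$, varying-$\beta$ scheme instead.
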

In the following statement, we recall existence and regularity results of isoperimetric sets constrained in \emph{compact} subsets with smooth boundary. Given a complete Riemannian manifold $(M, g)$, an open bounded subset $U \subset M$ with smooth boundary and a value $0 < v < \abs{U}$, we say that a subset with finite perimeter $E_v \subset \overline{U}$ is isoperimetric of volume $v$ in $\overline{U}$ if $\abs{E_v} = v$ and
\begin{equation}
\label{isoperimetric-def}
P(E_v) \, = \, \inf\left\{P(F) \, \, \vert \,\, F\subset \overline{U}, \, \, \abs{F}= v\right\}.
\end{equation}
\begin{theorem}
\label{isoperimetric-aux}
Let $(M, g)$ be a complete Riemannian manifold of dimension $n$ with smooth boundary, and let $U \subset M$ be an open bounded subset with smooth boundary.  Then, for any $0 < v < \abs{U}$, there exists an isoperimetric set $E_v \subseteq \overline{U}$ of volume $v$. Moreover, $\partial E_v$ is of class $\mathscr{C}^{1, 1}$ in a neighbourhood of $\partial E_v \cap \partial U$, and a smooth constant mean curvature hypersurface in a neighbourhood of any point of $(\partial E_v \setminus \partial U) \setminus \mathrm{Sing}$, where $\mathrm{Sing} \subset (\partial E_v \setminus \partial U)$ is relatively closed and has Hausdorff dimension at most $n - 8$. If $2 \leq n \leq 7$, then $\mathrm{Sing}$ is empty.
\end{theorem}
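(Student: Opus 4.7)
The plan is standard: combine the direct method in the space of sets of finite perimeter with the well-established regularity theory for volume-constrained almost-minimizers and for obstacle problems.

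For existence, I would take a minimizing sequence $\{F_j\}_{j\in\N}$ of subsets of $\overline{U}$ with $\abs{F_j}=v$ and $P(F_j)$ converging to the infimum in~\eqref{isoperimetric-def}. Since all the $F_j$ sit inside the compact set $\overline{U}$, the sequence is trivially equibounded and has equibounded perimeters. The compactness theorem for sets of finite perimeter (e.g.~\cite[Theorem 12.26]{maggi}) then provides a subsequence converging in $L^1$ to some $E_v$, and up to zero-measure modification $E_v\subseteq\overline{U}$. The $L^1$-convergence preserves the volume constraint $\abs{E_v}=v$, while the lower semicontinuity of De Giorgi's perimeter gives $P(E_v)\leq\liminf_j P(F_j)$, so that $E_v$ is an isoperimetric set in $\overline{U}$.

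For the regularity away from the obstacle, the crucial step is to upgrade $E_v$ to a $\Lambda$-minimizer of the perimeter in $M\setminus\partial U$. The standard device is to fix a reference ball $B^\sharp\Subset M\setminus\partial U$ where $\partial E_v$ has nontrivial density, and, given any competitor $F$ with $F\Delta E_v\Subset B_r(x)\Subset M\setminus(\partial U\cup B^\sharp)$, to compensate the volume variation $\abs{F}-\abs{E_v}$ by a smooth one-parameter deformation supported in $B^\sharp$, whose first-order cost is controlled by a constant $\Lambda$ depending only on $B^\sharp$ and on local geometry. This yields $P(E_v,B_r(x))\leq P(F,B_r(x))+\Lambda\abs{E_v\Delta F}$, from which the De Giorgi--Federer--Almgren theory (see e.g.~\cite[Part III]{maggi}) gives that $\partial E_v\setminus\partial U$ is a smooth hypersurface of constant mean curvature outside a closed singular set of Hausdorff dimension at most $n-8$; Simons' classification of area-minimizing cones rules out singularities when $n\leq 7$.

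The delicate point is the behaviour of $\partial E_v$ at contact points with $\partial U$. Near such a point the problem becomes a genuine obstacle problem, the obstacle being the complement of $U$, with the volume constraint again reducible to $\Lambda$-minimality by the Lagrange-multiplier trick described above. The smoothness of $\partial U$ allows one to test $E_v$ against competitors of the form $E_v\cup G$ and $E_v\setminus G$ with $G\Subset B_r(x_0)$ and to apply the obstacle-regularity results of Sternberg--Williams~\cite{sternberg-williams} and Tamanini~\cite{tamanini-regularity} that we have already invoked in Theorem~\ref{regularity-hulls}, obtaining $\mathscr{C}^{1,1}$-regularity at the contact set. I expect this to be the main obstacle of the whole argument: once the reduction from volume-constrained minimality to unconstrained $\Lambda$-minimality is in place, both the interior smoothness with the stated singular-set bound and the $\mathscr{C}^{1,1}$-regularity at the obstacle follow from well-known results.
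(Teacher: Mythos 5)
Your existence argument matches the paper's: both apply the Direct Method, using compactness and lower semicontinuity of perimeter inside the compact set $\overline{U}$. Where you diverge from the paper is in the regularity step. The paper delegates the whole regularity statement to two ready-made references: \cite[Theorem 6.15]{mondino-spadaro} for the $\mathscr{C}^{1,1}$-regularity at contact points with $\partial U$ (a theorem tailored precisely to the volume-constrained obstacle problem) and \cite{morgan-regularity} for the interior smooth constant-mean-curvature regularity with the $n-8$ singular-set bound. You instead reconstruct the argument via the standard Lagrange-multiplier trick: reduce volume-constrained minimality to $\Lambda$-minimality by compensating volume changes inside a fixed reference ball $B^\sharp$ away from the contact set, then invoke the De Giorgi--Federer--Almgren interior regularity and the Sternberg--Williams/Tamanini obstacle regularity. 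The two routes end up in the same place, and yours is more self-contained; the tradeoff is that you should be careful about two points that the Mondino--Spadaro citation absorbs automatically. First, \cite{sternberg-williams} and \cite{tamanini-regularity} are stated for exact minimizers of the least area problem with obstacle, not for $\Lambda$-minimizers, so one has to verify (or quote a stronger version) that their $\mathscr{C}^{1,1}$ conclusion is stable under the lower-order $\Lambda$-perturbation introduced by the volume constraint; this is standard but not immediate from their statements. Second, the volume-compensation device needs a reference ball $B^\sharp \Subset M\setminus\partial U$ where $\partial E_v$ has nontrivial perimeter; this requires $\partial E_v \not\subseteq \partial U$, which should be argued (or the degenerate case dismissed by noting that there $\partial E_v$ inherits the smoothness of $\partial U$). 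Modulo these two checks, your approach is correct and gives essentially the same conclusion as the paper.
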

\begin{proof}
The existence of the isoperimetric set $E_v$ is a standard application of the Direct Method.
The $\mathscr{C}^{1, 1}$-regularity of $E_v$ around $\partial U$ is the content of \cite[Theorem 6.15]{mondino-spadaro}, while the smoothness up to $\mathrm{Sing}$ with the above properties away from $\partial U$ is a consequence of the regularity properties of isoperimetric sets without obstacles, see e.g. \cite{morgan-regularity}. 
\end{proof}
\begin{remark}
\label{spadaro-remark}
As confirmed by \cite{spadaro-personal}, a modification in the proof of \cite[Theorem 4.3]{focardi-spadaro} allows to infer that, in the assumptions and notations above, if in addition $\partial U$ is mean-convex, then the boundary of $\partial E_v \cap \partial U$ decomposes in an $(n-2)$-dimensional regular part and in an $(n-2)$-rectifiable one. The modification is needed in order to take into account the volume constraint.
\end{remark}
In the following statement, we gather some fundamental relations satisfied by the mean curvatures of $\partial E_v$ and $\partial U$.  
\begin{proposition}
\label{mean-curviso-prop}
Let $(M, g)$ be a complete Riemannian manifold, and let $U \subset M$ be a bounded subset with smooth boundary. Then, for any $0 < v < \abs{U}$, the constrained isoperimetric set $E_v$ in $\overline{U}$ satisfies the following properties.
\begin{itemize}
\item[$(i)$] 
We have
\begin{equation}
\label{lowerboundH-claim}
\HH \geq \inf_{\partial U} \mathrm{H}_{\partial U},
\end{equation}
on $(n-1)$-almost any point in $\partial E_v \cap \partial U$.
We are denoting with $\HH_{\partial U}$ the mean curvature of $\partial U$, and with $\HH$ the weak mean curvature of $E_v$ defined $(n-1)$-almost everywhere on $\partial E_v \setminus \mathrm{Sing}$. The singular set $\mathrm{Sing}$ was defined in the statement of Theorem \ref{isoperimetric-aux}
\item[$(ii)$] We have
\begin{equation}
\label{upper-mean-iso}
\HH_v \geq \HH  
\end{equation}
at $(n-1)$-almost any point of $\partial E_v \setminus \mathrm{Sing}$
where $\HH_v$ is the constant value of the mean curvature on $(\partial E_v \setminus \partial U) \setminus \mathrm{Sing}$.
\end{itemize}
\end{proposition}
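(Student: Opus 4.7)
The plan is to establish both inequalities by adapting classical first-variation and touching arguments to the obstacle setting $E_v \subseteq \overline{U}$. The $\mathscr{C}^{1,1}$ regularity of $\partial E_v$ near the contact set $C = \partial E_v \cap \partial U$ provided by Theorem~\ref{isoperimetric-aux} ensures that at $(n-1)$-a.e. point of $C$ the second fundamental form of $\partial E_v$ is defined, and that $C$ itself has a well-defined approximate tangent plane agreeing with that of $\partial U$ at such points.

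For $(i)$, I would prove the stronger pointwise bound $\HH(x) \geq \HH_{\partial U}(x)$ at $(n-1)$-a.e. $x \in C$ via a touching-principle argument. At such a point $x$, the approximate tangent planes $T_x \partial E_v$ and $T_x \partial U$ coincide, and the outward unit normals agree since $E_v$ lies inside $\overline{U}$. Writing $\partial E_v$ and $\partial U$ locally as graphs $x_n = f(y)$ and $x_n = g(y)$ over this common tangent plane (in a suitable geodesic chart), one has $f(0) = g(0) = 0$, $\nabla f(0) = \nabla g(0) = 0$, and the inclusion $E_v \subseteq \overline{U}$ translates into $f \leq g$ in a neighbourhood of the origin. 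At a point of twice-differentiability of $f$, this forces $D^2 f(0) \leq D^2 g(0)$; tracing and passing to the infimum over $\partial U$ yields $\HH(x) \geq \HH_{\partial U}(x) \geq \inf_{\partial U} \HH_{\partial U}$.

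For $(ii)$, I would use that $E_v$ minimises the perimeter under the simultaneous constraints $E \subseteq \overline{U}$ and $|E|=v$, and hence satisfies an associated variational inequality. Given any smooth vector field $X$ on $M$ whose normal component $\eta = \langle X, \nu \rangle$ on $\partial E_v$ is nonpositive on $C$ and has vanishing integral over $\partial E_v$, the induced flow is admissible up to a higher-order volume correction supported in the free part, so the first variation of perimeter at $E_v$ is nonnegative:
\begin{equation}
\int\limits_{\partial E_v \setminus \mathrm{Sing}} \HH\,\eta\,\dd\sigma \,\geq\, 0 \, .
\end{equation}
Splitting this integral into contributions from $C$ and from the free part $(\partial E_v \setminus \partial U)\setminus \mathrm{Sing}$, where $\HH \equiv \HH_v$, and using the volume balance $\int_{\partial E_v}\eta\,\dd\sigma = 0$, one obtains
\begin{equation}
\int\limits_C (\HH - \HH_v)\,\eta\,\dd\sigma\,\geq\,0
\end{equation}
for every such admissible $\eta \leq 0$ supported on $C$. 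By arbitrariness of $\eta$ and a Lebesgue-differentiation argument, this forces $\HH \leq \HH_v$ at $(n-1)$-a.e. point of $C$, which together with $\HH = \HH_v$ on the free portion completes $(ii)$.

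The main technical obstacle I expect is the rigorous construction of these admissible test variations. At the free boundary of the contact set, where the free and contact portions of $\partial E_v$ meet, the $\mathscr{C}^{1,1}$ regularity degenerates; however, Remark~\ref{spadaro-remark} guarantees that this locus decomposes into an $(n-2)$-regular and an $(n-2)$-rectifiable part, so $(n-1)$-a.e. $x \in C$ admits a small neighbourhood inside $\partial E_v$ contained in the relative interior of $C$, where the standard Lipschitz deformation machinery and the first-variation identity for $\mathscr{C}^{1,1}$ hypersurfaces apply. A compensating deformation localised in the smooth free portion of $\partial E_v$ then supplies the exact volume balance required to make the variation truly admissible.
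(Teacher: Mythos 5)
Your proposal is correct, and the two parts differ from the paper's proof in interestingly complementary ways. For part $(ii)$ your argument is essentially the one the paper uses (and attributes to Ritor\'e): you take a variation field whose normal component is nonpositive on the contact set, compactly supported in the free part where it compensates the volume change, and read off the sign of the first variation of perimeter; up to relabelling ($\eta = \psi - \phi$ with $\psi$ on the free part, $\phi$ arbitrary), this is the same construction as in the paper, and your remark about working away from the free boundary of the contact set so that the outward-moving portion stays strictly inside $U$ is exactly the point that makes the paper's compactly supported $\psi$ admissible. For part $(i)$ you take a genuinely different and more self-contained route: the paper delegates the estimate to the mean-curvature-measure comparison of Mondino--Spadaro's Lemma 6.10 and then passes from the integral inequality $\int_K \HH \, \dd\sigma \geq \inf_{\partial U} \HH_{\partial U}\,\sigma(K)$ to the a.e.\ statement by the arbitrariness of $K$, whereas you run a direct touching argument over the common tangent plane, exploiting that a $\mathscr{C}^{1,1}$ graph is twice differentiable at $\sigma$-a.e.\ point (Alexandrov/Rademacher for the Lipschitz gradient) and that $f \leq g$ with first-order contact forces $D^2 f(0) \leq D^2 g(0)$. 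Your version actually gives the stronger pointwise conclusion $\HH(x) \geq \HH_{\partial U}(x)$ at a.e.\ $x \in \partial E_v \cap \partial U$ rather than only the bound against $\inf_{\partial U}\HH_{\partial U}$. One step you should make explicit, since it is the hinge of your argument for $(i)$: the weak (distributional/first-variation) mean curvature $\HH$ of the $\mathscr{C}^{1,1}$ hypersurface coincides $(n-1)$-a.e.\ with the pointwise trace of the second fundamental form read off from the second-order Taylor expansion; this is standard but is precisely what lets you pass from the graphical comparison $-\Delta f(0) \geq -\Delta g(0)$ to the stated inequality for $\HH$. With that spelled out, both parts are complete.
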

\begin{proof}
Arguing exactly as done for \cite[Lemma 6.10]{mondino-spadaro}, we get that the mean-curvature measure (see the aforementioned contribution for the definition) of $\partial E_v$, that since by Theorem \ref{isoperimetric-aux} $\partial E_v$ is $\mathscr{C}^{1, 1}$ at $\partial E_v \cap \partial U$, is represented by the $L^{\infty}$-function $\HH$, satisfies
\[
\int_K \HH \dd\sigma \geq \inf_{\partial U} \mathrm{H}_{\partial U} \int_{K} \dd\sigma
\]
for \emph{any} $K \subset \partial E_v \cap \partial U$ relatively compact. The arbitrariness of $K$ implies \eqref{lowerboundH-claim}.
To see \eqref{upper-mean-iso}, we can follow an argument outlined in the note of  Ritor\'e \cite{ritore-talk}, that we include for the reader's convenience. Let $\phi$ be a positive function compactly supported in $\partial E_v \setminus \mathrm{Sing}$, and $\psi$ be a positive function compactly supported in $(\partial E_v \setminus \partial U) \setminus \mathrm{Sing}$ , such that $\int_{\partial E_v} \phi = \int_{\partial E_v} \psi$. Let $\nu$ be the outward pointing unit normal to $\partial E_v$. Then, the variation of $\partial E_v$ given by $\psi \nu - \phi \nu$ fixes its volume and keeps it inside of $U$. Being $E_v$ isoperimetric, such variation is going to increase the area, and thus the first variation formula yields
\begin{equation}
\label{first-var}
\int\limits_{\partial E_v} \HH_v \phi \dd\sigma - \int\limits_{\partial E_v} \HH \psi \dd\sigma \geq 0.
\end{equation}  
The above inequality  implies that
\begin{equation}
\label{first-var-applied}
\HH_v \geq \frac{\int_{\partial E_v} \HH \psi \dd\sigma}{\int_{\partial E_v} \psi \dd\sigma},
\end{equation}
where we employed $\int_{\partial E_v} \phi = \int_{\partial E_v} \psi$. The arbitrariness of $\psi$ easily implies $\HH_v \geq \HH$ almost everywhere on $\partial E_v \setminus \mathrm{Sing}$, as claimed.  
\end{proof}
The above proposition together with the mean-convexity of our exhaustion and some geometric features of  manifolds with nonnegative Ricci curvature will enable us to 
apply the  Willmore-type inequality \eqref{willmoref} to constrained isoperimetric sets and result in a differential inequality satisfied by the isoperimetric profile of any of the exhausting sets, in turn yielding the Isoperimetric Inequality for these constrained isoperimetric sets, that will readily imply the same for any bounded set with smooth boundary $\Omega \subset M$. Let us see this in details, completing thus the proof of Theorem \ref{isoperimetric-theorem}.
\begin{proof}[Proof of Theorem \ref{isoperimetric-theorem}]
Let $\Omega \subset M$ be a bounded set with smooth boundary. We consider a bounded strictly outward minimising set $U$ satisfying $\Omega \Subset U$ with \emph{smooth} boundary. To produce it, we can argue as follows. Let $u$ be the capacitary potential of $\Omega$, that is, the solution to \eqref{pbp-chap3} with $p = 2$. Then, since such function vanishes at infinity, its superlevel sets exhaust $M$ and have compact closure. Being $u$ harmonic, it is smooth, and thus by Sard's theorem
we can immediately find $t \in (0, 1)$ such that $\{u = t\} = \partial \{u \geq t \}$ is smooth and $\Omega \Subset \{u \geq t\}$. Consider then the strictly outward minimising hull $\{u \geq t\}^*$ of $\{u \geq t \}$, existing by Theorem \ref{main-limit-1} and enjoying $\mathscr{C}^{1, 1}$-regularity by Theorem \ref{regularity-hulls}. This last sentence in particular holds true because the dimension is smaller or equal than $7$. Observe that the mean-convex boundary of $\{u \geq t\}^*$ cannot be minimal, because of the Willmore-type inequality of Theorem \ref{willmore-th}. Thus, we can approximate $\{u \geq t\}^*$ in $\mathscr{C}^1$ as in \cite[Lemma 5.6]{Hui_Ilm} to obtain a strictly outward minimising set $U$ with \emph{smooth} strictly mean-convex boundary and strictly enclosing $\Omega$, and thus obviously satisfying $\abs{\Omega} < \abs{U}$. 

\medskip

We consider now an isoperimetric set $E_v$ of $\overline{U}$ with volume $v < \abs{U}$. Such a set exists and enjoys the properties enlisted in Theorem \ref{isoperimetric-aux}. We first observe that  the boundary of  
$\partial E_v$ is strictly mean-convex and that the supremum of the mean-curvature is attained on $\partial E_v \setminus \partial U$.
If $\partial E_v \cap \partial U$ is of positive $(n-1)$-dimensional measure, this is a direct consequence of Proposition \ref{mean-curviso-prop} and of the strict mean-convexity of the smooth boundary $\partial U$.
If, on the other hand, $\partial E_v \cap \partial U$ is of null $(n-1)$-dimensional measure, then the $\mathscr{C}^{1, 1}$-boundary of $\partial E_v$ is of constant (weak) mean-curvature, and thus standard regularity theory implies that it is actually a smooth constant mean-curvature hypersurface. Thus, by Kasue's Theorem \cite[Theorem C]{kasue_minimal} (see also the  reformulation obtained in \cite[Theorem 1.7]{Ago_Fog_Maz_1}), its constant mean curvature must be strictly positive.

\medskip

Let us now apply the Willmore-type inequality \eqref{willmoref} to $E_v$. Set
 \begin{equation}
W = \inf\left\{ \,\int\limits_{\partial \Omega} \left\vert \frac{\HH}{n-1} \right\vert^{n-1} \, \, \, \Bigg\vert \,\,\, \Omega \text{ bounded with} \, \mathscr{C}^{1, 1} \, \text{boundary}\right\},
\end{equation}
that is strictly positive by means of \eqref{willmoref}.
Taking into account that the supremum of the mean curvature of $\partial E_v$ is $\HH_v$ and that $\partial E_v$ is mean-convex we get
\begin{equation}
\label{conto-will}
W \leq  \int_{\partial E_v} \left(\frac{\HH}{n-1}\right)^{n-1} \dd\sigma \leq \left(\frac{\HH_v}{n-1}\right)^{n-1} \abs{\partial E_v}.
\end{equation}
We are now going to link the above inequality satisfied by $\HH_v$ with the \emph{isoperimetric profile} of $\overline{U}$, that we recall to be defined as the function $I_{\overline{U}} : v \to \abs{\partial E_v}$. We recall that 
$I_{\overline{U}}$ is a continuous function, as proved in way greater generality in \cite{flores-nardulli}.
Let $\epsilon < 0$, and let $E^\epsilon \subset E_v$ be a subset of volume $\abs{E_v} + \epsilon$ obtained by deforming $\partial E_v$ with an inward deformation supported in $\partial E_v \setminus \partial U$. We have
\begin{equation}
\label{dini-estimate}
\frac{I_{\overline{U}}^{\frac{n}{(n-1)}}(v + \epsilon) - I_{\overline{U}}^{\frac{n}{(n-1)}}(v)}{\epsilon} \geq \frac{\abs{\partial E^{\epsilon}}^{\frac{n}{(n-1)}} - \abs{\partial E_v}^{\frac{n}{(n-1)}}}{\epsilon}. 
\end{equation}
Letting $\epsilon \to 0^-$ and coupling with \eqref{conto-will} we get
\begin{equation}
\label{dini1}
\begin{split}
\liminf_{\epsilon\to 0^{-}} \frac{I_{\overline{U}}^{\frac{n}{(n-1)}}(v + \epsilon) - I_{\overline{U}}^{\frac{n}{(n-1)}}(v)}{\epsilon} \geq \frac{n}{(n-1)} \, \abs{\partial E_v}^{\frac{1}{(n-1)}} \, \HH_v &\geq  n \left[\int_{\partial E_v} \left(\frac{\HH}{n-1}\right)^{n-1} \dd\sigma\right]^{\frac{1}{(n-1)}} \\
&\geq n W^{\frac{1}{(n-1)}}.
\end{split}
\end{equation}
The leftmost hand side of \eqref{dini1} constitutes the \emph{lower left Dini derivative} of $I_{\overline{U}}^{n/(n-1)}$ at $v$ that we are going to denote with $\D_{-} \big(I_{\overline{U}}^{n/(n-1)}\big) (v)$. Let us compare now with a reference conical isoperimetric profile 
\begin{equation}
\label{conical-profile}
I : v \to n^{\frac{(n-1)}{n}}W^{\frac{1}{n}} v^{\frac{n-1}{n}}. 
\end{equation} 
By \eqref{dini1} and the explicit  expression of $I$ we deduce
\begin{equation}
\label{monotonicity-profile}
\D_{-} \left(I_{\overline{U}}^{\frac{n}{(n-1)}} - I^{\frac{n}{(n-1)}}\right) \geq 0. 
\end{equation}
Since continuous functions with a nonnegative Dini derivative are known to be monotone nondecreasing (see e.g. \cite[Appendix I, Theorem 2.3]{liapunov}), and both $I_{\overline{U}}$ and $I$ are easily seen to vanish at zero, we get
\begin{equation}
\label{conto-profile2}
\frac{[I_{\overline{U}}(v)]^n}{v^{n-1}} \geq W n^{n-1}.
\end{equation}
Finally, recall that $\abs{\Omega} = v$, and since $\Omega \Subset U$ we have $I_{\overline{U}} (v) \leq \abs{\partial \Omega}$. Thus, \eqref{conto-profile2} and the Willmore-type inequality \eqref{willmoref} imply
\begin{equation}
\label{quasi-fine}
\frac{\abs{\partial \Omega}^n}{\abs{\Omega}^{n-1}} \geq \frac{[I_{\overline{U}}(v)]^n}{v^{n-1}} \geq W n^{n-1} \geq \AVR \abs{\Sf^{n-1}} n^{n-1} = \frac{\abs{\Sf^{n-1}}^n}{\abs{\mathbb{B}^n}^{n-1}} \AVR .
\end{equation}
We have in particular established the above inequality for any bounded set $\Omega \subset M$ with smooth boundary.
Since the leftmost hand side above approaches the rightmost hand side when computed on (smoothed out if necessary) geodesic balls $B(O, R)$ and let $R \to + \infty$, this completes the proof of \eqref{isoperimetricf}.

\medskip

Let us now prove the rigidity statement triggering when the isoperimetric inequality holds with the equality sign for a bounded $\Omega \subset M$ with smooth boundary. Consider as above $U\subset M$ a bounded set with smooth strictly outward minimising boundary such that $\Omega \Subset U$, and let $E_{v^*}$ be isoperimetric in $\overline{U}$ for the volume $v^* = \abs{\Omega}$. Then, \eqref{quasi-fine} implies that we can choose $\Omega = E_{v^*}$ and that \eqref{conto-profile2} holds with equality sign. Hence
\[
I_{\overline{U}}^{\frac{n}{(n-1)}}(v^*) - I^{\frac{n}{(n-1)}}(v^*) = 0. 
\]
Since, by \eqref{monotonicity-profile} such difference between isoperimetric profiles is monotone nondecreasing in $[0, v^*]$, and it is equal to zero at $v = 0$, we deduce that it is actually constant on the whole interval.
This implies that, for any $v \leq v^*$,
we have
\begin{equation}
\label{uguaglianza-tappeto}
\frac{\abs{\partial E_v}^{n}}{\abs{E_v}^{n-1}} = \AVR \frac{\abs{\Sf^{n-1}}^n}{\abs{\mathbb{B}^n}^{n-1}}.
\end{equation}
On the other hand, by the asymptotically optimal isoperimetric inequality in compact manifolds \cite[Théorème, Appendice C]{berard-meyer} applied in $\overline{U}$ to the subsets $E_v$, we have that, for any $\epsilon > 0$ small enough, there exists $v_\epsilon$ such that for any volume $v \leq v_\epsilon$ it holds
\begin{equation}
\label{almost-optimal-iso}
\frac{\abs{\partial E_v}^{n}}{\abs{E_v}^{n-1}} \geq (1 - \epsilon) \frac{\abs{\Sf^{n-1}}^n}{\abs{\mathbb{B}^n}^{n-1}}.
\end{equation}
Coupling it with \eqref{uguaglianza-tappeto}, this implies that $\AVR \geq (1 - \epsilon)$ for any $\epsilon > 0$ small enough, hence implying $\AVR = 1$ and thus, by Bishop-Gromov's rigidity, the isometry between $(M, g)$ and flat $\R^n$. Consequently, $\Omega$ is ball in $\R^n$ with respect to the flat metric.
\end{proof}

We now pass to deduce the Faber-Krahn inequality in manifolds with nonnegative Ricci curvature and Euclidean volume growth. We are dealing with the first eigenvalue $\lambda_1(\Omega)$ of $\Omega \subset M$, that is the smallest $\lambda > 0$ such that 
\begin{equation}
\label{eigenvalue-prob}
\begin{cases}
\,\,-\Delta{u}= \lambda u & \mbox{in} \,\, \Omega 
\\
\,\,\,\,\,\,\,u=0 & \mbox{on}\,\, \partial\Omega 
\end{cases}
\end{equation}
admits a solution.
The complete statement reads as follows.
\begin{theorem}[Faber-Krahn inequality on manifolds with nonnegative Ricci]
\label{faber-th}
Let $(M, g)$ be a complete noncompact Riemannian manifold with nonnegative Ricci curvature and Euclidean volume growth. Let $\Omega \subset M$ be a bounded subset with smooth boundary of volume $v$. Then, we have
\begin{equation}
\label{faberf}
\lambda_1(\Omega) \geq \AVR^{2/n} \lambda_1(\mathbb{B}_v),
\end{equation}
where $\mathbb{B}_v$ is any metric ball of flat $\R^n$ with volume $v$. In the above formula $\lambda_1(\Omega)$ and $\lambda_1(\mathbb{B}_v)$ denote the first eigenvalue of $\Omega$ and $\mathbb{B}_v$ with respect to $g$ and the flat metric of $\R^n$ respectively.
Moreover, equality holds in \eqref{faberf} if and only if $(M, g)$ is isometric to flat $\R^n$ and $\Omega = \mathbb{B}_v$.
\end{theorem}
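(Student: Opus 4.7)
The plan is to follow the classical Faber-Krahn argument based on decreasing rearrangement, combined with the sharp Isoperimetric Inequality of Theorem \ref{isoperimetric-theorem}. Let $u \in \mathscr{C}^\infty(\Omega) \cap \mathscr{C}^0(\overline{\Omega})$ be a positive first eigenfunction for \eqref{eigenvalue-prob}. Set $\mu(t) = |\{u > t\}|$ and define $u^* \colon \mathbb{B}_v \to \R$ as the Schwarz-symmetric decreasing rearrangement of $u$, i.e., the unique radially symmetric decreasing function on $\mathbb{B}_v$ with $|\{u^* > t\}| = \mu(t)$ for every $t \geq 0$. By equi-measurability, $\int_\Omega u^2 \dd\mu = \int_{\mathbb{B}_v} (u^*)^2 \dd x$, so by the variational characterisation of $\lambda_1$ it suffices to show
\begin{equation}
\int\limits_\Omega |\na u|^2 \dd\mu \, \geq \, \AVR^{2/n} \int\limits_{\mathbb{B}_v} |\na u^*|^2 \dd x.
\end{equation}

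The core ingredient is an $\AVR$-weighted Pólya–Szegő inequality, which I would establish via the coarea formula. By Sard's theorem, for almost every $t \in (0, \max u)$ the level set $\{u = t\}$ is a smooth hypersurface, and writing $\mu'(t) = -\int_{\{u = t\}} |\na u|^{-1} \dd\sigma$, the Cauchy–Schwarz inequality gives
\begin{equation}
\int\limits_{\{u = t\}} |\na u| \dd\sigma \, \geq \, \frac{|\{u = t\}|^2}{-\mu'(t)}.
\end{equation}
Applying the sharp Isoperimetric Inequality \eqref{isoperimetric-inequality-intro} from Theorem \ref{isoperimetric-theorem} to $\{u > t\}$, combined with the explicit Euclidean identity $|\{u^* = t\}| = |\Sf^{n-1}|/|\mathbb{B}^n|^{(n-1)/n} \, \mu(t)^{(n-1)/n}$, yields
\begin{equation}
|\{u = t\}| \, \geq \, \AVR^{1/n} \, |\{u^* = t\}|.
\end{equation}
Inserting this into the previous estimate and integrating via coarea,
\begin{equation}
\int\limits_\Omega |\na u|^2 \dd\mu \, \geq \, \AVR^{2/n} \int_0^{\max u} \frac{|\{u^* = t\}|^2}{-\mu'(t)} \dd t \, = \, \AVR^{2/n} \int\limits_{\mathbb{B}_v} |\na u^*|^2 \dd x,
\end{equation}
where the last equality is the standard equality case of the same chain of inequalities for the radial function $u^*$, on whose level sets $|\na u^*|$ is constant. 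This gives \eqref{faberf}. (Strictly speaking, Theorem \ref{isoperimetric-theorem} is stated for bounded smooth sets, but a standard approximation reduces the superlevel-set case to it; alternatively, Appendix B of the paper packages exactly this P\'olya-Szeg\"o step.)

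For the rigidity, suppose equality holds in \eqref{faberf} for some smooth bounded $\Omega$. Tracing the inequalities backwards, equality must propagate to the isoperimetric inequality for almost every superlevel set $\{u > t\}$. Since such superlevel sets have positive volume less than $|\Omega| = v$, Theorem \ref{isoperimetric-theorem} forces $(M, g)$ to be isometric to flat $\R^n$, hence $\AVR = 1$; at this point $u^*$ realises the Faber-Krahn equality on $\mathbb{B}_v$ with respect to the eigenvalue $\lambda_1(\Omega)$, forcing $\Omega$ to coincide with the Euclidean ball $\mathbb{B}_v$ by the classical Euclidean rigidity of Faber-Krahn.

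The most delicate point is the P\'olya–Szeg\"o step in the non-Euclidean ambient: one must control the level sets of $u$ through the isoperimetric inequality uniformly in $t$, justify the coarea identity for $\mu'(t)$ (using that $u$ is smooth away from a measure-zero set of critical values), and handle the non-smoothness of the distribution function carefully. This is precisely the technical content promised by the slightly generalised P\'olya–Szeg\"o Principle mentioned in the paper's Summary as the subject of Appendix B, and I would invoke it to streamline the argument.
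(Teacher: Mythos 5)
Your proposal follows essentially the same route as the paper: the paper's proof simply cites its Proposition~\ref{polia-szego} (the P\'olya--Szeg\"o Principle), whose proof in Appendix~B is exactly the symmetrization-plus-coarea argument you carry out inline, with the isoperimetric constant $\mathrm{C}_g = \AVR$ entering through \eqref{isoperimetric-inequality-intro} and the Rayleigh-quotient characterisation of $\lambda_1$ finishing the inequality. The only genuine, if minor, divergence is in the rigidity step: you conclude $(M,g)\cong\R^n$ from equality on one intermediate superlevel set and then appeal to the classical Euclidean Faber--Krahn rigidity to force $\Omega=\mathbb{B}_v$, whereas the paper instead lets $t\to 0^+$ in the propagated isoperimetric equality for $\{f\geq t\}$ to conclude that $\Omega$ itself achieves equality in \eqref{isoperimetric-inequality-intro}, so that the rigidity clause of Theorem~\ref{isoperimetric-theorem} yields both the isometry of $(M,g)$ with $\R^n$ and $\Omega=\mathbb{B}_v$ in one stroke, without invoking the separate Euclidean Faber--Krahn equality case as an external ingredient.
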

In order to show it, we rely on the following, well known general principle, named after P\'{o}lya and Szeg\"o.
\begin{proposition}[P\'{o}lya-Szeg\"o Principle]
\label{polia-szego}
Let $(M, g)$ be a Riemannian manifold, and assume that for any bounded $\Omega \subset M$ with smooth boundary there holds
\begin{equation}
\label{generalised-iso}
\frac{\abs{\partial \Omega}^n}{\abs{\Omega}^{n-1}} \, \geq \, \frac{\abs{\Sf^{n-1}}^n}{\abs{\mathbb{B}^n}^{n-1}} \mathrm{C}_g 
\end{equation}
for some $\mathrm{C}_{g}$ independent of $\Omega$. Let, for $\Omega \subset M$ a bounded set with smooth boundary of volume $v$, $f \in \mathscr{C}^{\infty}(\overline{\Omega})$ be a positive function with $f = 0$ on $\partial \Omega$. Then, there exists a function $F\in W^{1, 2}_0(\mathbb{B}_v)$, where $\mathbb{B}_v \subset \R^n$ is a ball with respect to the flat metric of volume $v$ such that
\begin{equation}
\label{polia-szegof}
\int_{\Omega} \abs{\D f}^2 \dd\mu \geq \mathrm{C}_{g}^{\frac{2}{n}} \int_{\mathbb{B}_v} \abs{\D F}^2_{\R^n} \dd\mu_{\R^n} \qquad \qquad \int_\Omega f^2 \dd\mu = \int_{\mathbb{B}_v} F^2 \dd \mu_{\R^n}.
\end{equation} 
Moreover, equality holds in the inequality above if and only $\Omega$ satisfies equality in \eqref{generalised-iso}.
\end{proposition}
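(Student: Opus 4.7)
The plan is to invoke the classical technique of Schwarz symmetric decreasing rearrangement, adapted to the present setting by the assumed generalised isoperimetric inequality \eqref{generalised-iso}. First I would introduce the distribution function $\mu_f(t) = \abs{\{f > t\}}$ and define $F \colon \mathbb{B}_v \to \R$ to be the radial, decreasing function whose superlevel sets $\{F > t\}$ are concentric Euclidean balls of volume $\mu_f(t)$. By construction $f$ and $F$ are equimeasurable, so Cavalieri's principle yields at once the identity
\begin{equation}
\int_\Omega f^2 \dd\mu = \int_{\mathbb{B}_v} F^2 \dd\mu_{\R^n},
\end{equation}
and similarly $F$ belongs to $L^2(\mathbb{B}_v)$.

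Next, to establish the gradient inequality, the crucial computation combines the coarea formula with \eqref{generalised-iso} applied slicewise. Writing
\begin{equation}
\int_\Omega \abs{\D f}^2 \dd\mu = \int_0^{\sup f} \dd t \int\limits_{\{f = t\}} \abs{\D f} \dd\sigma,
\end{equation}
I would apply the Cauchy--Schwarz inequality on each level set to get
\begin{equation}
\abs{\partial \{f > t\}}^2 \leq \left(\,\int\limits_{\{f = t\}} \abs{\D f} \dd\sigma\right) \left(\,\int\limits_{\{f = t\}} \frac{1}{\abs{\D f}} \dd\sigma \right),
\end{equation}
where the second factor equals $-\mu_f'(t)$ for almost every $t$ by coarea. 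Sard's theorem guarantees that $\{f > t\}$ has smooth boundary for a.e. $t$, so \eqref{generalised-iso} applies and gives $\abs{\partial \{f > t\}} \geq \mathrm{C}_g^{1/n} \, \abs{\partial \{F > t\}}$, since $\{F > t\}$ is by construction a Euclidean ball of the same volume as $\{f > t\}$. Combining these ingredients and observing that $F$ is radial and decreasing, so the Cauchy--Schwarz inequality on its level sets becomes an equality, one arrives at
\begin{equation}
\int_\Omega \abs{\D f}^2 \dd\mu \, \geq \, \int_0^{\sup f} \frac{\abs{\partial \{f > t\}}^2}{-\mu_f'(t)} \dd t \, \geq \,  \mathrm{C}_g^{2/n} \!\int_0^{\sup f} \!\!\frac{\abs{\partial \{F > t\}}^2}{-\mu_F'(t)} \dd t \, = \, \mathrm{C}_g^{2/n} \!\int_{\mathbb{B}_v} \!\abs{\D F}^2_{\R^n} \dd\mu_{\R^n},
\end{equation}
which in particular establishes that $F \in W^{1,2}_0(\mathbb{B}_v)$.

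The rigidity statement is then essentially built into the proof: if equality holds, then equality must propagate through each of the three inequalities above, and in particular the isoperimetric inequality \eqref{generalised-iso} must hold as an equality on $\{f > t\}$ for a.e. $t$, including in the limit $t \to 0^+$, which forces $\Omega$ itself to be an equality case. Conversely, under equality in \eqref{generalised-iso} for $\Omega$, one would additionally need equality for all level sets, which is the standard subtle point of such results; I expect the main technical obstacle to lie precisely here, in justifying that the rigidity at the level of the set $\Omega$ propagates to each of its superlevel sets, together with the matching condition on $|\D f|$ being constant on level sets forced by the Cauchy--Schwarz equality. The remaining steps—the almost-everywhere differentiability of $\mu_f$, the validity of coarea for Sobolev functions, and a standard approximation argument to replace the smooth $f$ with generic Sobolev competitors if needed—are routine.
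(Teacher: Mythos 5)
Your proposal follows the same route as the paper: Schwarz symmetric decreasing rearrangement, the coarea formula together with a level-set Cauchy--Schwarz inequality, and the isoperimetric inequality \eqref{generalised-iso} applied slicewise to compare $\abs{\partial\{f>t\}}$ with the boundary area of the Euclidean ball of the same volume. The two proofs are essentially identical; the paper parametrises the superlevel sets via the function $\rho(t)$ with $V(t)=\abs{B(\rho(t))}_{\R^n}$ and sets $F=\rho^{-1}\circ r$, which is just a concrete realisation of the radial decreasing rearrangement you describe, and the chain of inequalities and the limiting argument $t\to 0^+$ via Sard for the rigidity are the same. (As a minor aside, in the paper's display \eqref{passaggio-faber} the $\int_{\{f=t\}}\abs{\D f}^{-1}$ factor appears as a product rather than a quotient; your statement of the Cauchy--Schwarz step is the correct one.)

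On the converse direction of the rigidity statement, which you flag as the main technical obstacle: you need not worry about it. The paper's proof establishes, and Theorem~\ref{faber-th} only uses, the implication ``equality in \eqref{polia-szegof} for the given $f$ $\Rightarrow$ $\Omega$ is an equality case of \eqref{generalised-iso}''. The reverse implication you try to address is in fact false as literally stated --- take $\Omega$ a Euclidean ball in flat $\R^n$ (so $\mathrm{C}_g=1$ and $\Omega$ is isoperimetric) and a non-radial $f$, for which the P\'olya--Szeg\"o inequality is strict --- so the ``if and only if'' in the statement should simply be read as the one useful implication, and your proof, which does establish that implication, is complete.
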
 
We are reporting for completeness a proof of the above Proposition in Appendix B. We can now easily deduce Theorem \ref{faber-th}.
\begin{proof}[Proof of Theorem \ref{faber-th}]
Just recalling that the first Dirichlet eigenvalue minimises the Rayleigh quotient, we get, applying \eqref{polia-szegof} to the solution of \eqref{eigenvalue-prob} with $\lambda = \lambda_1(\Omega)$, and observing that by the sharp Isoperimetric inequality \eqref{isoperimetric-inequality-intro} we have $\AVR = \mathrm{C}_g$, we get
\begin{equation}
\label{rayleigh}
\lambda_1(\Omega) = \frac{\int_\Omega \abs{\D f}^2 \dd\mu}{\int_\Omega f^2 \dd\mu} \geq \AVR^{2/n} \frac{\int_{\mathbb{B}_v} \abs{\D F}^2_{\R^n} \dd\mu_{\R^n}}{\int_{\mathbb{B}_v} F^2 \dd\mu_{\R^n}} \geq \AVR^{2/n} \lambda_1(\mathbb{B}_v),
\end{equation}
that is \eqref{faberf}. If equality holds in \eqref{faberf}, then, equalities hold in \eqref{rayleigh}, that implies equality holds in the inequality of \eqref{polia-szegof}. The rigidity statement of Proposition \ref{polia-szego} then shows that $\Omega$ satisfies equality in the Isoperimetric Inequality of $(M, g)$, and then the rigidity statement of Theorem \ref{isoperimetric-theorem} applies, yielding the isometry of $(M, g)$ with $(\R^n, g_{\R^n})$ and the isometry of $\Omega$ with $\mathbb{B}_v$. 
\end{proof}
\setcounter{equation}{0}
\renewcommand\theequation{A.\arabic{equation}}
\section*{Appendix A}

We furnish a proof of  Theorem \ref{existence-potential}. Despite seemingly well known, it is not easy to find in literature a complete and in particular self-contained proof of this existence and uniqueness result. Anyway, no original arguments appear below, and we are referring mainly to \cite{holopainen-quasiregular}, \cite{heinonen-book} and \cite{pigola-setti-troyanov}. 
\begin{proof}[Proof of Theorem \ref{existence-potential}]
Let $B(O, R)$ be a geodesic ball containing $\Omega$.  Let $\psi \in \mathscr{C}^{\infty}_c (B(O, R))$ satisfy $\psi = 1$ on $\Omega$.
Then, we can find a solution to the Dirichlet problem for the $p$-Laplacian with boundary values $1$ on $\partial \Omega$ and $0$ on $\partial B(O, R)$, namely a weakly $p$-harmonic function $u_R$ on $B(O, R) \setminus \Omega$ satisfying $u - \psi \in W^{1, p}_0 (B(O, R) \setminus \overline{\Omega})$. It is immediately deduced from Tolksdorf's Comparison Principle for $p$-harmonic functions (see for example \cite[Lemma 3.18]{heinonen-book} that if $\tilde{u}$ is another such function relative to another $\tilde{\psi} \in W^{1, p}_0 (B(O, R))$ then $u = \tilde{u}$ on $B(O, R) \setminus \overline{\Omega}$.
Defining the $p$-capacity of $\Omega$ relative to $B(O, R)$ as 
\begin{equation}
\label{p-cap-rel}
\capa_p(\Omega, B(O, R)) = \inf\left\{\int_{M} \abs{\D f}^p \dd\mu \, \, \Big\vert \, \, f \geq \chi_\Omega, \, f \in \mathscr{C}^{\infty}_c (B (O, R))\right\},
\end{equation} 
we immediately see through a very standard argument that $u_R$ can be approximated in $W^{1, p}$ by a sequence of admissible competitors in \eqref{p-cap-rel}, and thus
\begin{equation}
\label{inequality-pcap-rel}
\capa_p(\Omega, B(O, R)) \leq \int\limits_{B(O, R) \setminus \overline{\Omega}} \abs{\D u_R}^p \dd\mu.
\end{equation}
On the other hand, observing that in the definition of weakly $p$-harmonic functions on $B(O, R) \setminus \overline{\Omega}$ the class of tests can easily be relaxed to $W^{1, p}_0 (B(O, R) \setminus \overline{\Omega})$, we get
\begin{equation}
\label{passaggio-pcap-rel}
\int\limits_{B(O, R) \setminus \overline{\Omega}} \abs{\D u_R}^p \dd\mu = \int\limits_{B(O, R) \setminus \overline{\Omega}} \left\langle \abs{\D u_R}^{p-2} \D u_R \, \vert \, \D u_R\right\rangle \dd\mu = \int\limits_{B(O, R) \setminus \overline{\Omega}} \left\langle \abs{\D u_R}^{p-2} \D u_R \, \vert \, \D \psi\right\rangle \dd\mu,
\end{equation} 
where the last equality is deduced from the definition of weak $p$-harmonicity with $u - \psi \in W^{1, p}_0 (B(O, R) \setminus \overline{\Omega}) $ as test function. Applying the H\"older inequality to the right hand side of \eqref{passaggio-pcap-rel}, we are immediately left with
\begin{equation}
\label{holder-pcap-rel}
\int\limits_{B(O, R) \setminus \overline{\Omega}} \abs{\D u_R}^p \dd\mu \, \, \leq \int\limits_{B(O, R) \setminus \overline{\Omega}} \abs{\D \psi}^p \dd\mu
\end{equation}
for any $\psi \in \mathscr{C}^{\infty}_c (B (O, R))$ satisfying $\psi = 1$ on $\Omega$. Since, clearly, a sequence of functions satisfying the same assumptions on $\psi$ suffices to realise the relative $p$-capacity of $\Omega$ (see for example \cite[(2.2.1)]{mazia} for details), we get passing to the limit through such a sequence in \eqref{holder-pcap-rel} that we can also take the opposite inequality sign in \eqref{inequality-pcap-rel}, obtaining
\begin{equation}
\label{pcap-rel-identity}
\capa_p(\Omega, B(O, R)) = \int\limits_{B(O, R) \setminus \overline{\Omega}} \abs{\D u_R}^p \dd\mu.
\end{equation}
Let us finally show that passing to the limit as $R \to +\infty$ yields a solution to \eqref{pbp-chap3}. In what follows, we are frequently extending $u_R$ to $0$ outside $B(O, R)$ without explicitly mentioning it. The deep $\mathscr{C}^{1, \beta}_{\mathrm{loc}}$-estimates for $p$-harmonic functions give, for any compact set $K$ of $M \setminus \Omega$, a constant $\mathrm{C}$ so that
\begin{equation}
\label{holder-estimate}
\abs{u_R (x) - u_R(y)} \leq \mathrm{C}  \, d(x, y)^\beta
\end{equation} 
and
\begin{equation}
\label{holdergrad-estimate}
\Big\vert \abs{\D u_R}(x) - \abs{\D u_R}(y) \Big\vert \leq \mathrm{C} \,  d (x, y)^\beta.
\end{equation}
The constant $\mathrm{C}$ depends on the dimension, on $p$, on smooth quantities related to the underlying metric $g$ on $K$ and on the $W^{1, p}$ norm of $u_R$ on $K$. By the Maximum Principle for $p$-harmonic functions, $0 \leq u_R \leq 1$ on $K$. Moreover, by \eqref{pcap-rel-identity} and the obvious monotonicity of the $\capa_p(\Omega, B(O, R))$ as $R$ increases, 
we have
\begin{equation}
\label{uniformlyboundedpcap}
\int\limits_K \abs{\D u_R}^p \dd\mu \leq \int\limits_{B(O, R_0)} \abs{\D u_{R_0}}^p \dd\mu
\end{equation}
for any $R_0 > 0$ such that $K \Subset B(O, R_0)$ and any $R > R_0$. In particular, the constant $\mathrm{C}$ is uniform in $R$, and by Arzel\`a-Ascoli applied to the sequence of $u_R$ and the sequence of the gradients we deduce that, up to a subsequence, the sequence $u_R$ converges in $\mathscr{C}^{1}_{\mathrm{loc}}$ to a $\mathscr{C}_{\mathrm{loc}}^1$ function $u$. Moreover, as it is immediately seen from the weak formulation of $p$-harmonicity, such $u$ is weakly $p$-harmonic on $M \setminus \overline{\Omega}$. Since $\partial \Omega$ is in particular Lipschitz, it satisfies an interior and exterior cone condition. In particular, by \cite{beirao-conti} (see also \cite{beirao-nonlinear}) $u_R$ attains continuously the datum on $\partial \Omega$. Thus, \cite[Theorem 1]{liebermann} implies that the $\mathscr{C}^{1, \beta}_{\mathrm{loc}}$-estimates for $u_R$  are valid up to the boundary, that is $K$ was allowed to contain portions of $\partial \Omega$, and thus the convergence of the $u_R$ takes place also on $\partial \Omega$, where then the sequence obviously converges to $1$. This argument also shows that $u$ takes the initial datum on $\partial \Omega$ in $\mathscr{C}^{1, \beta}$. 
Moreover, since by Tolksdorf's Hopf Lemma for $p$-harmonic functions \cite{tolksdorf} we have $\abs{\D u} > 0$ on $\partial \Omega$, the continuity up to the boundary of the gradient ensures that $\abs{\D u} > 0$ in a neighbourhood of $\partial \Omega$, where thus the solution is a smooth classical solution by quasilinear elliptic regularity, and in particular  it attains the datum in $\mathscr{C}^{k, \beta}$ for some $\beta \in (0, 1)$ if $\partial \Omega$ belongs to $\mathscr{C}^{k, \alpha}$. 

To complete the existence part of the proof, we just have to check that $u$ vanishes at infinity. This is immediately deduced from the vanishing at infinity of $G_p$. Indeed, since $G_p$ is positive, a straightforward comparison argument for the approximators $u_R$ yields a constant $\mathrm{C}$ independent of $R$ so that $0 < u_R \leq \mathrm{C} \, G_p(O, \cdot)$,
that by the convergence shown above, implies, sending $R \to +\infty$, 
\begin{equation}
\label{basic-barrier}
0 < u \leq \mathrm{C} \, G_p(O, \cdot).
\end{equation}
This last estimate clearly implies the vanishing at infinity of $u$.
\smallskip

Briefly, we show uniqueness. Let $v$ be any other solution to \eqref{pbp-chap3}, and let $k \in \N$. Let $R_k$ be such that $u \leq v + 1/k$ on (a smoothed out approximation of) $\partial B(O,{R_k})$. Such a radius surely exists by the vanishing at infinity of $u$. Then, since $u$ and $v$ achieve the same value on $\partial \Omega$, the Comparison Principle for $p$-harmonic functions applied to $u$ and $v + 1/k$ on $B(O, R_k) \setminus \overline{\Omega}$ shows that $u \leq v + 1/k$ on this set. Letting $k \to +\infty$ we obtain $u \leq v$ on $M \setminus \Omega$. Exchanging the roles of $u$ and $v$ gives the opposite inequality, showing that $u = v$, that is uniqueness.
\smallskip

Now, we check that $u$ realises the $p$-capacity of $\Omega$. This again will come from the properties of the approximators. 
Since $u_R \to u$ as $R\to \infty$ pointwise, and $\int_K \abs{\D u_R}^p \dd\mu$ is uniformly bounded in $R$ for any compact $K \Subset M\setminus \Omega$ by \eqref{uniformlyboundedpcap},
we can invoke the basic but very useful \cite[Lemma 1.33]{heinonen-book} to infer that $u \in L^p (M \setminus \overline{\Omega})$ and that $\D u_R$ converges weakly to $\D u$. In particular, by lower semicontinuity of the norm we
have
\[
\int\limits_{M \setminus \overline{\Omega}} \abs{\D u}^p \dd\mu \leq \liminf_{R \to \infty} \int\limits_{M \setminus \overline{\Omega}} \abs{\D u_R}^p \dd\mu,
\]
and since the righthand side is uniformly bounded again by \eqref{uniformlyboundedpcap}, we conclude that $u \in W^{1, p}(M \setminus \overline{\Omega})$. Since it vanishes at infinity, it actually belongs to $W_0^{1, p}(M \setminus \overline{\Omega})$. We can thus argue exactly as done for \eqref{pcap-rel-identity} to show that $u$ realises the $p$-capacity of $\Omega$, that is \eqref{p-cap-u-chap3}.
\end{proof}

\setcounter{equation}{0}
\setcounter{theorem}{0}
\renewcommand{\thetheorem}{B.\arabic{theorem}}
\renewcommand\theequation{B.\arabic{equation}}
\section*{Appendix B}
We furnish a proof of Proposition \ref{polia-szego}. Although this is very classical, and we are substantially reporting part of the  one given for \cite[Chapter IV, Theorem 2]{chavel-eigenvalues}, we were not able to find in literature an explicit dependence on the isoperimetric constant and a consequent discussion of the extremal case.
\begin{proof}[Proof of Proposition \ref{polia-szego}]
Let  $V: [0, T] \to \R$, where $T = \max_{\overline{\Omega}} f$, the function defined by 
$
V(t) = \abs{\{f \geq t\}}.
$
This function is easily seen to be continuous and actually $\mathscr{C}^{\infty}$ on regular values of $f$. Consider $B_t = \mathbb{B}_{\abs{\{f \geq t\}}}$, that is, the ball in $\R^n$ with (Euclidean) volume equal to $V(t)$. This defines a bijective function $\rho: [0, T] \to [0, \rho(0)]$ such that
$
V(t) = \abs{B(\rho(t))}_{\R^n}$,
where $B(\rho(t))$ is any ball of radius $\rho(t)$. 
Define finally $F: \overline{\mathbb{B}_v} \to \R$ by $F = \rho^{-1} \circ r$, where $r$ is the distance from the center of $\mathbb{B}_v$. It is shown exactly as in \cite{chavel-eigenvalues} that with this definition the identity in \eqref{polia-szegof} is satisfied. By coarea formula, we have
\begin{equation}
\label{passaggio-faber}
\int_{\Omega} \abs{\D f}^2 \dd\mu = \int\limits_0^T \int\limits_{\{f = t\}} \abs{\D f} \dd\sigma \dd t \geq \int\limits_0^T\abs{\{f = t\}}^2 \int\limits_{\{f = t\}} \frac{1}{\abs{\D f}} \dd\sigma \dd t
\end{equation}
On the other hand, by  \eqref{generalised-iso}, 
\begin{equation}
\label{application-iso}
\abs{\{f = t\}}^2  \, \geq \, \abs{\{f \geq t\}}^{2\frac{n-1}{n}} \abs{\Sf^{n-1}}^2 \, \abs{\mathbb{B}^n}^{2\frac{n-1}{n}} \mathrm{C}_{g}^{\frac{2}{n}} \, = \, \abs{B_t}_{{\R^n}} ^{2\frac{n-1}{n}} \, \abs{\Sf^{n-1}}^2 \, \abs{\mathbb{B}^n}^{2\frac{n-1}{n}} \mathrm{C}_{g}^{\frac{2}{n}} \, = \, \abs{\partial B_t}_{{\R^n}}^2  \mathrm{C}_{g}^{{2}/{n}},
\end{equation}
where in the last equality we used the fact that balls in $\R^n$ satisfy equality in the Isoperimetric Inequality.
Using 
that
\[
\rho'(t)= - \frac{1}{\abs{\partial B_t}_{\R^n}} \int\limits_{\{f = t\}} \frac{1}{\abs{\D f}} \dd\sigma \qquad \quad \left(\rho^{-1}\right)'(\rho(t)) = \frac{1}{\rho'(t)},
\]
we get,
plugging the outcome of \eqref{application-iso} in \eqref{passaggio-faber} and using again the coarea formula,
\begin{equation}
\label{chain-poliaszego}
\int_{\Omega} \abs{\D f}^2 \dd\mu \,\geq\, \mathrm{C}_{g}^{{2}/{n}} \int\limits_0^T \left[\left(\rho^{-1}\right)'\right]^2 \abs{\partial B_t}_{\R^n} \rho'(t) \dd t \, = \, \mathrm{C}_{g}^{{2}/{n}}\int_{\mathbb{B}_v} \abs{\D F}^2_{\R^n} \dd\mu_{\R^n},
\end{equation}
as desired.

Assume now that equality holds in the inequality of \eqref{polia-szegof}. Then, we deduce from \eqref{chain-poliaszego} and \eqref{application-iso} that 
\begin{equation}
\label{integral-poliaszego}
\int\limits_0^T\abs{\{f = t\}}^2  - \abs{\partial B_t}_{{\R^n}}^2  \mathrm{C}_{g}^{\frac{2}{n}} \dd t = 0,
\end{equation}
that, by the nonnegativity of the integrand, implies that the sets $\{f \geq t\}$ satisfy equality in the Isoperimetric Inequality of $(M, g)$ for any regular value $t$. Letting $t \to 0^+$, that is possible by Sard's Theorem, we conclude that $\Omega$ satisfy equality in \eqref{generalised-iso}.
\end{proof}

\end{document}